\renewcommand{\subsection}[1]{\vspace{3mm}\refstepcounter{subsection}\noindent{\bf \thesubsection. #1.} }
\newcommand{\np}{\vspace{3mm}\refstepcounter{subsection}\noindent{\bf \thesubsection.} }
\renewcommand{\subsubsection}[1]{\vspace{3mm}\refstepcounter{subsubsection}\noindent{\bf \thesubsubsection. #1.} }
\newcommand{\snp}{\vspace{3mm}\refstepcounter{subsubsection}\noindent{\bf \thesubsubsection.} }
\numberwithin{equation}{section}
\renewcommand{\geq}{\geqslant}
\renewcommand{\leq}{\leqslant}
\newcommand{\Osh}{{\mathcal O}}                        
\renewcommand{\H}{\mathrm{H}}                          
\newcommand{\FF}{\mathbf{F}}
\newcommand{\kk}{\mathbf{k}}
\newcommand{\N}{\operatorname{N}}
\newcommand{\Vol}{\operatorname{Vol}}
\newcommand{\spec}{\operatorname{Spec}}
\newcommand{\eff}{{\mathrm{eff}}}
\newcommand{\ord}{\mathrm{ord}}
\newcommand{\KK}{\mathbf{K}}
\newcommand{\PP}{\mathbb{P}} 
\newcommand{\QQ}{\mathbb{Q}} 
\newcommand{\RR}{\mathbb{R}} 
\newcommand{\ZZ}{\mathbb{Z}} 
\newtheorem{theorem}{Theorem}[section]
\newtheorem{lemma}[theorem]{Lemma}
\newtheorem{corollary}[theorem]{Corollary}
\newtheorem{proposition}[theorem]{Proposition}
\theoremstyle{definition}
\begin{document}

\title{Diophantine approximation constants for varieties over function fields}

\author{Nathan Grieve}
\address{Department of Mathematics and Statistics,
University of New Brunswick,
Fredericton, NB, Canada}
\email{n.grieve@unb.ca}

\thanks{\emph{Mathematics Subject Classification (2010):} Primary 14G05, Secondary 14G40.}
\maketitle
 
\begin{abstract} 
By analogy with the program of McKinnon-Roth \cite{McKinnon-Roth}, we define and study approximation constants for points of a projective variety $X$ defined over $\KK$ the function field of an irreducible and non-singular in codimension $1$ projective variety defined over an algebraically closed field of characteristic zero.  In this setting, we use an effective version of Schmidt's subspace theorem, due to J.T.-Y. Wang, to give a sufficient condition for such approximation constants to be computed on a proper $\mathbf{K}$-subvariety of $X$.  We also indicate how our approximation constants are related to volume functions and Seshadri constants.
\end{abstract}

\section{Introduction}\label{intro}
The aim of this article is to study the complexity of approximating rational points of a projective variety defined over a function field of characteristic zero.  Our motivation is work of McKinnon-Roth \cite{McKinnon-Roth} and our main results, which we state in \S \ref{main:results}, show how the subspace theorem can be used to prove \emph{Roth type theorems}, by analogy with those formulated in the number field setting, see \cite[p.~515]{McKinnon-Roth}.  

Indeed, we obtain lower bounds for approximation constants of rational points.  More precisely, we show how extensions of the subspace theorem can be used to obtain lower bounds which are independent of fields of definition and which can be expressed in terms of local measures of positivity; we also give sufficient conditions for approximation constants to be computed on a proper subvariety.  As it turns out, these kinds of theorems are related to rational curves lying in projective varieties, see \S \ref{motivation} and Corollary \ref{corollary1.3}.

As we explain in \S \ref{motivation}, an important aspect to the Roth type theorems obtained in \cite{McKinnon-Roth}, in the number field setting, is a theorem of Faltings-W\"{u}stholz, \cite[Theorem 9.1]{Faltings:Wustholz}.  Understanding the role that this theorem plays in the work \cite{McKinnon-Roth} was one of the original sources of motivation for the present article.  On the other hand, one key feature to our approach here is that we use Schmidt's subspace theorem, for function fields, to derive a function field analogue of \cite[Theorem 9.1]{Faltings:Wustholz}.  We then use this result, Corollary \ref{corollary5.5}, to prove Roth type theorems in a manner similar to what is done in \cite{McKinnon-Roth}.

\subsection{Motivation}\label{motivation}
The starting point for this article is \cite[Theorem 9.1]{Faltings:Wustholz}, an interesting  
theorem of Faltings-W\"{u}stholz,  and its relation to work of McKinnon-Roth \cite{McKinnon-Roth}.    To motivate and place what we do here in its proper context let us describe the results of \cite{McKinnon-Roth} in some detail.  To this end, let $\KK$ be a number field, $\overline{\KK}$ an algebraic closure of $\KK$, $X$ an irreducible projective variety defined over $\KK$, and $x \in X(\overline{\KK})$.  The main focus of \cite{McKinnon-Roth} is the definition and study of an extended real number $\alpha_x(L)$ depending on a choice of ample line bundle $L$ on $X$ defined over $\KK$.  The intuitive idea is that the invariant $\alpha_x(L)$ provides a measure of how expensive it is to approximate $x$ by infinite sequences of distinct $\KK$-rational points of $X$.  A key insight of \cite{McKinnon-Roth} is that this arithmetic invariant is related not only to local measures of positivity for $L$ about $x$, namely the Seshadri constant $\epsilon_x(L)$, and $\beta_x(L)$ the relative asymptotic volume constant of $L$ with respect to $x$, but also to the question of existence of rational curves in $X$, passing through $x$ and defined over $\KK$.

More specifically, in \cite{McKinnon-Roth},  \cite[Theorem 9.1]{Faltings:Wustholz} was used to prove \cite[Theorem 6.2]{McKinnon-Roth} which asserts: if $g$ denotes the dimension of $X$, then either
$$\alpha_x(L) \geq \beta_x(L) \geq \frac{g}{g+1} \epsilon_x(L)$$ 
or
$$\alpha_x(L) = \alpha_x(L|_{W})$$
for some proper $\KK$-subvariety $W$ of $X$.  A consequence of this result is \cite[Theorem 6.3]{McKinnon-Roth} which states that $\alpha_x(L) \geq \frac{1}{2}\epsilon_x(L)$ with equality if and only if both $\alpha_x(L)$ and $\epsilon_x(L)$ are computed on a $\KK$-rational curve $C$ such that $C$ is unibranch at $x$, $\kappa(x) \not = \KK$, $\kappa(x) \subseteq \KK_v$, and $\epsilon_{x,C}(L|_{C}) = \epsilon_{x,X}(L)$.  (Here $\kappa(x)$ denotes the residue field of $x$ and $\KK_v$ the completion of $\KK$ with respect to $v$ a place of $\KK$.)

In light of these results
D. McKinnon has conjectured:

\noindent {\bf Conjecture} (Compare also with \cite[Conjecture 4.2]{McKinnon-Roth-Louiville}){\bf .}  Let $X$ be a smooth projective variety defined over a number field $\KK$, $\overline{\KK}$ an algebraic closure of $\KK$, $x \in X(\overline{\KK})$, and $L$ an ample line bundle on $X$ defined over $\KK$.  If $\alpha_x(L) < \infty$, then there exists a $\KK$-rational curve $C \subseteq X$ containing $x$ and also containing a \emph{sequence of best approximation to $x$}.

Our purpose here is to give content to these concepts in the setting of projective varieties defined over function fields.  

\subsection{Statement of results and outline of their proof}\label{main:results}
Our main results rely on work of Julie Wang \cite{Wang:2004} and provide an analogue of \cite[Theorem 6.2]{McKinnon-Roth} for the case of  projective varieties defined over function fields.  

To describe our results in some detail, let $\overline{\kk}$ be an algebraically closed field of characteristic zero and $Y \subseteq \PP^r_{\overline{\kk}}$ an irreducible projective variety and non-singular in codimension $1$.   Let $\KK$ denote the function field of $Y$, $\overline{\KK}$ an algebraic closure of $\KK$, let $X \subseteq \PP^n_{\KK}$ be a geometrically irreducible  subvariety, and let $L=\Osh_{\PP^n_{\KK}}(1)|_{ X}$.    

Given a prime (Weil) divisor $\mathfrak{p} \subseteq Y$  and a point $x \in X(\overline{\KK})$ we define an extended non-negative real number $\alpha_x(L) = \alpha_{x,X}(L;\mathfrak{p})=\alpha_x(L;\mathfrak{p}) \in [0,\infty]$, depending on $L$, which, roughly speaking, gives a measure of the cost of approximating $x$ by an infinite sequence of distinct $\KK$-rational points $\{y_i\} \subseteq X(\KK)$ with unbounded height and converging to $x$.
 
Our goal is two fold: on the one hand we would like to relate $\alpha_x(L;\mathfrak{p})$ to local measures of positivity of $L$ about $x$ and, on the other hand, we would like to give sufficient conditions for $\alpha_x(L;\mathfrak{p})$ to be computed on a proper $\KK$-subvariety of $X$.  This is achieved by analogy with the program of \cite{McKinnon-Roth}.   

More precisely we relate $\alpha_x(L;\mathfrak{p})$ to two invariants of $x$ with respect to $L$.  To do so, let $\FF$ be the field of definition of $x$ and $X_\FF$ the base change of $X$ with respect to the field extension $\KK \rightarrow \FF$.  Next, let $\pi : \widetilde{X} = \mathrm{Bl}_x(X) \rightarrow X_\FF$ denote the blow-up of $X_\FF$ at the closed point corresponding to $x \in X(\overline{\KK})$ and let $E$ denote the exceptional divisor of $\pi$.  If $\gamma \in \RR_{\geq 0}$, then let $L_\gamma$ denote the $\RR$-line bundle $\pi^*L_\FF-\gamma E$; here $L_\FF$ denotes the pullback of $L$ to $X_\FF$ and, in what follows, we let $L_{\gamma,\overline{\KK}}$ denote the pullback of $L_\gamma$ to $\widetilde{X}_{\overline{\KK}}$ the base change of $\widetilde{X}$ with respect to $\KK \rightarrow \overline{\KK}$.

The first invariant, the \emph{relative asymptotic volume constant of $L$ with respect to $x$}, is defined by McKinnon-Roth in \cite{McKinnon-Roth} to be:
$$\beta_x(L) = \int_0^{\gamma_{\mathrm{eff}}} \frac{\mathrm{Vol}(L_{\gamma})} {\mathrm{Vol}(L)} d\gamma; $$
here $\mathrm{Vol}(L_\gamma)$ and $\mathrm{Vol}(L)$ denote the volume of the line bundles $L_{\gamma}$ and $L$ on $\widetilde{X}$ and $X$ respectively and the real number $\gamma_{\mathrm{eff}}$ is defined by:
$$\gamma_{\eff} = \gamma_{\eff,x}(L) = \sup \{\gamma \in \RR_{\geq 0} : L_{\gamma,\overline{\KK}} \text{ is numerically equivalent to an effective divisor} \}. $$
The second invariant is the \emph{Seshadri constant of $x$ with respect to $L$}:
$$\epsilon_x(L) = \sup \{ \gamma \in \RR_{\geq 0} : L_{\gamma, \overline{\KK}} \text{ is nef}\}\text{.} $$
 
Having described briefly our main concepts, our main result, which we prove in \S \ref{proof:main:results}, reads:

\begin{theorem}\label{theorem1.1}
Let $\KK$ be the function field of an  irreducible projective variety $Y \subseteq \PP^r_{\overline{\kk}}$, defined over an algebraically closed field $\overline{\kk}$ of characteristic zero, assume that $Y$ is non-singular in codimension $1$ and fix a prime divisor $\mathfrak{p} \subseteq Y$. Fix an algebraic closure $\overline{\KK}$ of $\KK$ and suppose that $X \subseteq \PP^n_{\KK}$ is a geometrically irreducible  subvariety, that $x \in X(\overline{\KK})$, and that $L = \Osh_{\PP^n_{\KK}}(1)|_{X}$.  In this setting, either
$$\alpha_x(L;\mathfrak{p}) \geq \beta_x(L) \geq \frac{\dim X}{\dim X + 1}\epsilon_x(L) $$
or 
 $$\alpha_{x,X}(L;\mathfrak{p}) = \alpha_{x,W}(L|_{W};\mathfrak{p})  $$ for some proper subvariety $W \subsetneq X$.  
\end{theorem}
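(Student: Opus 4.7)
Following the strategy of \cite[Theorem 6.2]{McKinnon-Roth}, I would split the argument into two parts: (i) the purely geometric inequality $\beta_x(L) \geq \tfrac{g}{g+1}\epsilon_x(L)$, where $g = \dim X$, and (ii) the dichotomy between $\alpha_x(L;\mathfrak{p}) \geq \beta_x(L)$ and the existence of a proper subvariety $W$ computing the same approximation constant.

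For (i) one may reduce to the case where $x$ corresponds to a smooth point of $X_\FF$ by working on a resolution. The standard blow-up formulas then give $(\pi^* L_\FF)^{g-i}\cdot E^i = 0$ for $1 \leq i \leq g-1$ and $E^g = (-1)^{g-1}$, whence $L_\gamma^g = L^g - \gamma^g$. For $\gamma \leq \epsilon_x(L)$ the bundle $L_{\gamma,\overline\KK}$ is nef, so $\mathrm{Vol}(L_\gamma) = L_\gamma^g$, and nefness also forces $\epsilon_x(L)^g \leq L^g$. Substituting into the definition of $\beta_x(L)$,
\[
\beta_x(L) \geq \int_0^{\epsilon_x(L)} \frac{L^g - \gamma^g}{L^g}\,d\gamma = \epsilon_x(L) - \frac{\epsilon_x(L)^{g+1}}{(g+1) L^g} \geq \frac{g}{g+1}\epsilon_x(L).
\]

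For (ii) I argue by contradiction: suppose $\alpha_x(L;\mathfrak{p}) < \beta_x(L)$ and fix $\eta > 0$ with $\alpha_x(L;\mathfrak{p}) + \eta < \beta_x(L)$. By definition of $\alpha_x$ there is an infinite sequence $\{y_i\} \subseteq X(\KK)$ of distinct points converging to $x$ whose $\mathfrak{p}$-adic approximation exponent is at most $\alpha_x(L;\mathfrak{p}) + \eta/2$. For large $m$ one constructs a filtration $H^0(X_\FF, L_\FF^{\otimes m}) = V_0 \supseteq V_1 \supseteq \cdots$ indexed by order of vanishing at $x$, with $V_j = \pi_* H^0\bigl(\widetilde X, \lfloor m(\pi^*L_\FF - \gamma_j E)\rfloor\bigr)$ for a dense grid $0 = \gamma_0 < \gamma_1 < \cdots < \gamma_{\mathrm{eff}}$. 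The weighted vanishing sum $\sum_j (\gamma_j-\gamma_{j-1}) \dim V_j / \dim V_0$ is a Riemann sum for $\beta_x(L)$; descending via a Galois/trace argument to a $\KK$-rational flag and applying Corollary \ref{corollary5.5} to this flag and to the sequence $\{y_i\}$ forces all but finitely many $y_i$ into a finite union of proper $\KK$-subvarieties of $X$. Pigeonholing, one such subvariety $W$ contains infinitely many $y_i$, and restricting the sequence to $W$ yields $\alpha_{x,X}(L;\mathfrak{p}) \geq \alpha_{x,W}(L|_W;\mathfrak{p})$; the reverse inequality is immediate from the definition.

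The principal obstacle I anticipate is matching the Riemann-sum weights to the precise hypothesis of Corollary \ref{corollary5.5}, namely ensuring that the strict inequality $\sum(\text{weights}) > \alpha_x(L;\mathfrak{p}) + \eta$ survives both the descent of the $\FF$-rational filtration back to $\KK$ and the error terms inherent in approximating $\beta_x(L)$ by a finite Riemann sum tied to the lattice $\tfrac{1}{m}\ZZ$. Handling the singularities of $Y$ and of $X_\FF$ at $x$ by passing to resolutions is a standard but delicate bookkeeping step.
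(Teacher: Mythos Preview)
Your approach is essentially that of the paper: the inequality $\beta_x(L)\geq\frac{g}{g+1}\epsilon_x(L)$ is simply quoted from \cite[Corollary 4.4]{McKinnon-Roth} (your direct computation is the same proof), and the dichotomy is obtained by choosing $R$ with $\alpha_x(L;\mathfrak p)<1/R<\beta_x(L)$, building a filtered basis of $\H^0(X_\FF,L_\FF^{\otimes m})$ by order of vanishing at $x$ whose weighted sum exceeds $h^0$ (the paper packages this as Theorem \ref{theorem7.1} via \cite[Lemma 5.5]{McKinnon-Roth}), and then invoking Corollary \ref{corollary5.5} to trap any offending sequence in a fixed proper $Z\subsetneq X$ (the paper's Proposition \ref{proposition6.2} and Theorem \ref{theorem6.3}).

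One clarification on what you flag as the ``principal obstacle'': there is no need to descend the $\FF$-rational filtration back to $\KK$. Corollary \ref{corollary5.5} is stated for sections that are $\FF$-linear combinations of a $\KK$-basis, precisely because Theorem \ref{theorem5.2} already extends Wang's subspace theorem to linear forms with coefficients in $\overline{\KK}$ (the Galois argument happens there, inside the subspace theorem, not at the level of the flag). So the filtration may remain over $\FF$ throughout; the exceptional set $Z$ produced by Corollary \ref{corollary5.5} is nonetheless defined over $\KK$. Likewise, no resolution of $X_\FF$ is needed for the dichotomy: the vanishing filtration and the metric estimates (Lemma \ref{lemma6.1}) use only the ideal $\mathfrak m_x$, not smoothness at $x$. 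Finally, note that the subspace theorem gives a \emph{single} proper $Z$ capturing all solutions of large height, so you get a uniform $W$ rather than one obtained by pigeonholing a particular sequence; this is what makes the equality $\alpha_{x,X}=\alpha_{x,W}$ immediate.
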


In particular, note that Theorem \ref{theorem1.1} implies that $\alpha_x(L;\mathfrak{p})$ is computed on a proper $\KK$-subvariety of $X$ provided that $\alpha_x(L;\mathfrak{p}) < \beta_x(L)$.    

By analogy with \cite{McKinnon-Roth}, Theorem \ref{theorem1.1} has the following consequence:

\begin{corollary}\label{corollary1.2}
In the setting of Theorem \ref{theorem1.1}, we have that $\alpha_x(L;\mathfrak{p}) \geq \frac{1}{2} \epsilon_x(L)$.  If equality holds then $\alpha_{x,X}(L;\mathfrak{p}) = \alpha_{x,C}(L|_{ C};\mathfrak{p})$ for some curve $C \subseteq X$ defined over $\KK$.
\end{corollary}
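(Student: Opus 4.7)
The plan is to bootstrap directly from Theorem \ref{theorem1.1} by induction on $g = \dim X$, exactly as McKinnon-Roth deduce their Theorem 6.3 from their Theorem 6.2. The inequality $\frac{g}{g+1} \geq \frac{1}{2}$ is the only arithmetic fact needed; the rest is formal.

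For the inequality, I proceed by induction on $g$. The base case $g = 1$ is immediate from Theorem \ref{theorem1.1}, since $\frac{g}{g+1} = \frac{1}{2}$ and the ``proper subvariety'' alternative is zero-dimensional (vacuous for approximation, so $\alpha_{x,W} = +\infty$ by convention). For $g \geq 2$, Theorem \ref{theorem1.1} presents two cases: either
$$\alpha_x(L;\mathfrak{p}) \geq \frac{g}{g+1}\epsilon_x(L) > \frac{1}{2}\epsilon_x(L),$$
which gives the bound directly, or $\alpha_{x,X}(L;\mathfrak{p}) = \alpha_{x,W}(L|_W;\mathfrak{p})$ for some proper $\KK$-subvariety $W \subsetneq X$ containing $x$. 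In the second case, after replacing $W$ by a geometrically irreducible component through $x$, the inductive hypothesis yields $\alpha_{x,W}(L|_W;\mathfrak{p}) \geq \frac{1}{2}\epsilon_{x,W}(L|_W)$. The proof is then completed by the monotonicity estimate $\epsilon_{x,W}(L|_W) \geq \epsilon_x(L)$, which follows from the observation that if $L_{\gamma,\overline{\KK}}$ is nef on $\widetilde{X}_{\overline{\KK}}$ then its pullback to the strict transform of $W$ is nef and agrees with $(L|_W)_{\gamma,\overline{\KK}}$.

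For the equality case, suppose $\alpha_x(L;\mathfrak{p}) = \frac{1}{2}\epsilon_x(L)$. If $g = 1$ we take $C = X$. If $g \geq 2$, since $L$ is ample we have $\epsilon_x(L) > 0$, so the strict inequality $\frac{g}{g+1} > \frac{1}{2}$ rules out the first alternative of Theorem \ref{theorem1.1}. Hence $\alpha_{x,X}(L;\mathfrak{p}) = \alpha_{x,W}(L|_W;\mathfrak{p})$ for some $W \subsetneq X$, and the chain
$$\tfrac{1}{2}\epsilon_x(L) = \alpha_{x,X}(L;\mathfrak{p}) = \alpha_{x,W}(L|_W;\mathfrak{p}) \geq \tfrac{1}{2}\epsilon_{x,W}(L|_W) \geq \tfrac{1}{2}\epsilon_x(L)$$
forces equality throughout; in particular $\alpha_{x,W}(L|_W;\mathfrak{p}) = \frac{1}{2}\epsilon_{x,W}(L|_W)$. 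By the inductive hypothesis applied to $W$, there is a $\KK$-curve $C \subseteq W \subseteq X$ with $\alpha_{x,W}(L|_W;\mathfrak{p}) = \alpha_{x,C}(L|_C;\mathfrak{p})$, and transitivity yields $\alpha_{x,X}(L;\mathfrak{p}) = \alpha_{x,C}(L|_C;\mathfrak{p})$.

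The main obstacle is bookkeeping rather than substance: one must verify the Seshadri monotonicity $\epsilon_{x,W}(L|_W) \geq \epsilon_x(L)$ in the present function field formulation of Seshadri constants, and one must confirm that the subvariety $W$ produced by Theorem \ref{theorem1.1} can be taken geometrically irreducible so that the inductive hypothesis applies to it. Both are essentially formal once the definitions of $\alpha_x(L;\mathfrak{p})$, $\beta_x(L)$, and $\epsilon_x(L)$ are in place.
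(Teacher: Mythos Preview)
Your proof is correct and follows essentially the same inductive approach as the paper: both argue by induction on $\dim X$, invoke Theorem \ref{theorem1.1} for the dichotomy, use the Seshadri monotonicity $\epsilon_{x,W}(L|_W) \geq \epsilon_x(L)$ (the paper cites \cite[Proposition 3.4(c)]{McKinnon-Roth}), and reduce to a geometrically irreducible component (the paper cites \cite[Lemma 2.17]{McKinnon-Roth}). Your treatment of the equality case via the chain of equalities is in fact more explicit than the paper's.
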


In the case that $\KK$ has transcendence degree $1$, Corollary \ref{corollary1.2} takes the more refined form:

\begin{corollary}\label{corollary1.3}  Assume that $\KK$ is the function field of a smooth projective curve over an algebraically closed field of characteristic zero. Let $X$ be a geometrically irreducible projective variety defined over $\KK$ and $L$ a very ample line bundle on $X$ defined over $\KK$.
If $x$ is a $\overline{\KK}$-rational point of $X$, then the inequality $\alpha_x(L) \geq \frac{1}{2}\epsilon_x(L)$ holds true.  If equality holds, then $\alpha_{x,X}(L) = \alpha_{x,B}(L|_B)$ for some rational curve $B\subseteq X$ defined over $\KK$.   
\end{corollary}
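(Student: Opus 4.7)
The plan is to derive Corollary \ref{corollary1.3} from Corollary \ref{corollary1.2} by arguing that, in the transcendence-degree-one setting, the curve furnished by Corollary \ref{corollary1.2} in the equality case must itself be rational. The inequality $\alpha_x(L) \geq \tfrac{1}{2}\epsilon_x(L)$ follows directly from Corollary \ref{corollary1.2}. For the equality case, Corollary \ref{corollary1.2} supplies a curve $C \subseteq X$ defined over $\KK$ with $\alpha_{x,X}(L) = \alpha_{x,C}(L|_C)$, and it remains to replace $C$ by a $\KK$-rational curve $B$.

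Since $L$ is very ample, $\epsilon_x(L)$ is a positive real number, so $\alpha_{x,C}(L|_C) = \tfrac{1}{2}\epsilon_x(L)$ is finite. By the definition of the approximation constant, this finiteness forces the existence of an infinite sequence of distinct $\KK$-rational points $\{y_i\} \subseteq C(\KK)$ of unbounded height converging $\mathfrak{p}$-adically to $x$. Passing to the normalization $\widetilde{C} \to C$ lifts this to an infinite sequence of $\KK$-points of unbounded height on a smooth projective curve $\widetilde{C}$ over $\KK$. I would then invoke the function-field form of Mordell's conjecture due to Grauert, Manin, and Samuel: if $\widetilde{C}$ has geometric genus at least two and is non-isotrivial, then $\widetilde{C}(\KK)$ is finite, contradicting the infinitude of $\{y_i\}$. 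In the non-isotrivial genus one case, the function-field Mordell--Weil theorem shows that $\widetilde{C}(\KK)$ is finitely generated, and a sequence of unbounded height in such a group cannot $\mathfrak{p}$-adically accumulate to a single point once one analyzes the reduction of $\widetilde{C}$ at $\mathfrak{p}$. This leaves only the isotrivial cases and the case $\widetilde{C}\cong\PP^1_{\KK}$; excluding the former forces $C$ itself to be a $\KK$-rational curve, and we take $B = C$.

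The principal obstacle lies in ruling out the isotrivial case. Here $\widetilde{C}$ is the base change to $\KK$ of a curve $\widetilde{C}_0$ defined over $\overline{\kk}$, and $\KK$-rational points of $\widetilde{C}$ correspond to morphisms $Y \to \widetilde{C}_0$ whose heights grow linearly in the degree of the morphism. One must verify that no infinite sequence of such morphisms of unbounded degree can, when evaluated at the geometric point of $Y$ associated with $\mathfrak{p}$, $\mathfrak{p}$-adically converge to a prescribed point of $\widetilde{C}_0(\overline{\KK})$. This requires combining the rigidity of maps from $Y$ into curves of positive genus with the explicit description of $v_{\mathfrak{p}}$-adic convergence in terms of the discrete valuation at $\mathfrak{p}$, after which the reduction of Corollary \ref{corollary1.3} to the rational-curve statement is automatic.
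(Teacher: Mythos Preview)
Your reduction via Corollary \ref{corollary1.2} and the observation that $\alpha_{x,C}(L|_C)$ must be finite are exactly what the paper does. Where you diverge is in the argument that a curve $C$ over $\KK$ with $\alpha_x(L|_C)<\infty$ must have geometric genus zero. You attempt a case split: non-isotrivial genus $\geq 2$ via Grauert--Manin--Samuel, non-isotrivial genus $1$ via Mordell--Weil plus an unexplained accumulation argument, and then you flag the isotrivial case as the principal obstacle. Both of the latter two are genuine gaps as written; in particular, finite generation of $\widetilde{C}(\KK)$ alone does not preclude a sequence of unbounded height $\mathfrak{p}$-adically converging to a point, and the isotrivial analysis you sketch is not carried out.

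The paper sidesteps all of this with a single uniform argument: Theorem \ref{theorem9.4} shows that for any abelian variety $A$ over $\KK$ and any very ample $L$, one has $\alpha_x(L)=\infty$ for every $x\in A(\overline{\KK})$. The proof combines the weak Mordell--Weil theorem with Roth's theorem for $\PP^1$ (pushed to $A$ via coordinates) and an amplification step through multiplication-by-$m$; no isotrivial/non-isotrivial distinction is needed. Then Theorem \ref{theorem9.1'} feeds any curve of positive geometric genus into its Jacobian via the Abel--Jacobi map to conclude $\alpha_x(L|_C)=\infty$. This handles your genus $1$ case, your genus $\geq 2$ case, and the isotrivial case simultaneously, and is what you should use to close the argument.
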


Theorem \ref{theorem1.1} and Corollary \ref{corollary1.2} are proven in \S \ref{proof:main:results}, while we prove Corollary \ref{corollary1.3} in \S \ref{9.6}.
Our techniques used to prove Theorem \ref{theorem1.1} and Corollary \ref{corollary1.2} are similar to those used to establish \cite[Theorem 6.3]{McKinnon-Roth}.   Indeed, we first define approximation constants for projective varieties defined over a field $\KK$ of characteristic zero together with a set $M_\KK$ of absolute values which satisfy the product rule.  The definition we give here extends that given in \cite{McKinnon-Roth} for the case that $\KK$ is a number field.  
We then restrict our attention to the case that $\KK$ is a function field.  In this setting, the effective version of Schmidt's subspace theorem given in \cite{Wang:2004}, and which is applicable to function fields of higher dimensional varieties, plays the role of the theorem of Faltings-W\"{u}stholz \cite[Theorem 9.1]{Faltings:Wustholz}.  More precisely, in \S \ref{5} we first give an extension of the subspace theorem obtained in \cite{Wang:2004}.  We then use this extension to obtain a function field analogue of the Faltings-W\"{u}stholz theorem.  Finally, we apply this result, in a manner similar to what is done in \cite{McKinnon-Roth}, to obtain Theorem \ref{theorem1.1} and Corollary \ref{corollary1.2}.

A key aspect to deducing Corollary \ref{corollary1.3} from Corollary \ref{corollary1.2}, is to first establish Theorem  \ref{theorem9.4} which determines the nature of approximation constants for rational points of Abelian varieties over function fields of curves.  This theorem and its proof are similar to the corresponding statement in the number field setting, see for instance \cite[Second theorem on p.~98]{Serre:Mordell-Weil-Lectures}. 

As some additional comments, 
again to place our results in their proper context, let us emphasize that in order for the results of this article to have content one encounters the question of existence of $\KK$-rational points for varieties defined over function fields.  To this end we recall the main result of \cite{Graber:Harris:Starr} which asserts that if $\KK$ is the function field of a complex curve, then every rationally connected variety defined over $\KK$ has a $\KK$-rational point.

\noindent
{\bf Acknowledgements.} 
This paper has benefited from comments and suggestions from Steven Lu, Mike Roth and Julie Wang.  I also thank Mike Roth for suggesting the problem to me.  Portions of this work were completed while I was a postdoctoral fellow at McGill University and also while I was a postdoctoral fellow at the University of New Brunswick where I was financially supported by an AARMS postdoctoral fellowship.  Finally, I thank  anonymous referees for carefully reading this work and for their comments, suggestions and corrections.

\section{Preliminaries: Absolute values, product formulas, and heights}\label{2}

In this section, to fix notation and conventions which we will require in subsequent sections, we recall some concepts and results about absolute values, product formulas, and heights.  Some standard references, from which much of our presentation is based, are \cite{Lang:Algebra}, \cite{Lang:Diophantine}, and \cite{Bombieri:Gubler}.  Throughout this section $\KK$ denotes a field of characteristic zero.  In \S \ref{2.4}--\ref{2.7} we will place further restrictions on $\KK$.  Indeed, there $\KK$ will also be a function field.

\np{}\label{2.1} {\bf Absolute values.}
By an \emph{absolute value on $\KK$} we mean a real valued function 
$$|\cdot|_{v} : \KK \rightarrow \RR$$ having the properties that:
\begin{enumerate}
\item[(a)]{$|x|_{v} \geq 0$ for all $x \in \KK$ and $|x|_{v} = 0$ if and only if $x = 0$;}
\item[(b)]{$|xy|_{v} = |x|_{v}|y|_{v}$, for all $x,y \in \KK$; }
\item[(c)]{$|x+y|_{v} \leq |x|_{v} + |y|_{v}$, for all $x,y \in \KK$.}
\end{enumerate}
We say that an absolute value $|\cdot|_{v}$ is \emph{non-archimedean} if it has the property that:  
$$|x+y|_{v} \leq \max(|x|_{v},|y|_{v} ) \text{, for all $x,y\in\KK$.}
$$
If an absolute value is not non-archimedean, then we say that it is \emph{archimedean}.  Every absolute value $|\cdot|_{v}$ defines a metric on $\KK$; the distance of two elements $x,y\in\KK$ with respect to this metric is defined to be $|x-y|_{v}$.   If $|\cdot|_{v}$ is an absolute value on $\KK$, then we let $\KK_{v}$ denote the completion of $\KK$ with respect to $|\cdot |_{v}$.  

\np{}\label{2.2}{\bf The product formula.}  Let $M_{\KK}$ denote a collection of absolute values on $\KK$.  We assume that our set $M_{\KK}$ has the property that if $x \in \KK^\times$ then $|x|_v = 1$ for almost all $|\cdot|_v \in M_{\KK}$.  We do not require $M_{\KK}$ to consist of inequivalent absolute values.  We say that \emph{$M_{\KK}$ satisfies the product formula} if for each $x \in \KK^{\times}$ we have:
\begin{equation}\label{eqn2.1}
\prod\limits_{|\cdot|_{v} \in M_{\KK}} |x|_{v} = 1 \text{.}
\end{equation}
\noindent
{\bf Remark.}  Note that the definition given above is similar to \cite[Axiom 1, p.~473]{Artin:Whaples} except that we do not require $M_{\KK}$ to consist of inequivalent absolute values.  The definition we give here is motivated by the discussion given in \cite[p.~24]{Lang:Diophantine}.

\np{}\label{2.3}{\bf Heights.}
 Let $M_\KK$ be a set of absolute values on $\KK$ which satisfies the product rule and $\PP^n_{\KK} = \mathrm{Proj} \  \KK[x_0,\dots,x_n]$.  If $y = [y_0:\cdots : y_n] \in \PP^n(\KK)$ then let 
\begin{equation}\label{eqn2.2}
H_{\Osh_{\PP^n_{\KK}}(1)}(y) = \prod\limits_{|\cdot|_{v} \in M_\KK} \max\limits_i |y_i|_{v}. 
\end{equation}
The fact that $M_\KK$ satisfies the product rule ensures that the righthand side of equation \eqref{eqn2.2} is well defined. 
The number $H_{\Osh_{\PP^n_{\KK}}(1)}(y)$ is called the \emph{multiplicative  height of $y$ with respect to $\Osh_{\PP^n_{\KK}}(1)$ and $M_\KK$} and the function 
\begin{equation}\label{eqn2.3} H_{\Osh_{\PP^n_{\KK}}(1)} : \PP^n(\KK) \rightarrow \RR 
\end{equation}
is called the \emph{multiplicative height function of $\PP^n_{\KK}$ with respect to the tautological line bundle and the set $M_\KK$}.  If $X \subseteq \PP^n_{\KK}$ is a projective variety then the multiplicative height of $x \in X(\KK)$ with respect to $L = \Osh_{\PP^n_{\KK}}(1)|_{ X}$ is defined by pulling back the function \eqref{eqn2.3} and is denoted by $H_L(x)$.

\np{}\label{2.4}{\bf Example.}  Let $\overline{\kk}$ be an algebraically closed field of characteristic zero, $Y$ an irreducible projective variety over $\overline{\kk}$ and non-singular in codimension $1$.  By a \emph{prime (Weil) divisor} of $Y$ we mean a closed integral subscheme $\mathfrak{p} \subseteq Y$ of codimension $1$.

Let $\eta$ denote the generic point of $Y$ and $\KK = \Osh_{Y,\eta}$ the field of fractions of $Y$.  If $\eta_{\mathfrak{p}}$ denotes the generic point of a prime divisor $\mathfrak{p}\subseteq Y$, then its local ring $\Osh_{Y,\eta_\mathfrak{p}} \subseteq \Osh_{Y,\eta}$ is a discrete valuation ring and we let 
\begin{equation}\label{eqn2.4} 
\ord_{\mathfrak{p}} : \KK^\times \rightarrow \ZZ
\end{equation}
denote the valuation determined by $\Osh_{Y,\eta_{\mathfrak{p}}}$.

Fix an ample line bundle $\mathcal{L}$ on $Y$.  If $\mathfrak{p} \subseteq Y$ is a prime divisor, then we let $\deg_{\mathcal{L}}(\mathfrak{p})$ denote the degree of $\mathfrak{p}$ with respect to $\mathcal{L}$, see for instance \cite[A.9.38]{Bombieri:Gubler}.  Next fix $0 < \mathbf{c} < 1$ and for each prime divisor $\mathfrak{p}\subseteq Y$, let
\begin{equation}\label{eqn2.5}
|x|_{\mathfrak{p},\KK} = \begin{cases}
\mathbf{c}^{\ord_{\mathfrak{p}}(x) \deg_{\mathcal{L}}(\mathfrak{p})} & \text{ for $x \not = 0$} \\
0 & \text{ for $x = 0$.}
\end{cases}
\end{equation}
The absolute values $|\cdot|_{\mathfrak{p},\KK}$, defined for each prime divisor $\mathfrak{p}\subseteq Y$ and depending on our fixed ample line bundle $\mathcal{L}$, are non-archimedean, proper and the set
\begin{equation}\label{eqn2.6}
M_{(Y,\mathcal{L})} = \{ |\cdot|_{\mathfrak{p},\KK} : \mathfrak{p}\subseteq Y \text{ is a prime divisor}\}
\end{equation}
is a proper set of absolute values which satisfies the product rule.

Since the set $M_{(Y,\mathcal{L})}$ satisfies the product rule we can define the multiplicative and logarithmic height functions of $\PP^n_\KK$ with respect to the tautological line bundle $\Osh_{\PP^n_\KK}(1)$.  Specifically if $y=[y_0:\dots:y_n] \in \PP^n(\KK)$, then the multiplicative height of $y$ is given by
\begin{equation}\label{eqn2.7}
H_{\Osh_{\PP^n_\KK}(1)}(y) = \prod\limits_{|\cdot|_{\mathfrak{p},\KK} \in M_{(Y,\mathcal{L})}} \max\limits_i |y_i|_{\mathfrak{p}, \KK},
\end{equation}
the logarithmic height of $y$ is given by
\begin{equation}\label{eqn2.8}
h_{\Osh_{\PP^n_{\KK}}(1)}(y) = - \sum\limits_{|\cdot|_{\mathfrak{p},\KK} \in M_{(Y,\mathcal{L})}} \min\limits_i (\ord_{\mathfrak{p}}(y_i)\deg_{\mathcal{L}}(\mathfrak{p}))
\end{equation}
and the logarithmic and multiplicative height functions are related by
\begin{equation}\label{eqn2.9}
- \log_{\mathbf{c}} H_{\Osh_{\PP^n_{\KK}}(1)}(y) = h_{\Osh_{\PP^n_\KK}(1)}(y).
\end{equation}

\np{}\label{2.5}{\bf Example.}   We continue with the situation of \S \ref{2.4}, we let $\overline{\KK}$ be an algebraic closure of the function field $\KK$ and we fix $\mathbf{F} / \KK$, $\mathbf{F} \subseteq \overline{\KK}$, a finite extension of $\KK$.  Let $\phi : Y' \rightarrow Y$ be the normalization of $Y$ in $\mathbf{F}$. As in \S \ref{2.4},  every ample line bundle $\mathcal{L}'$ on $Y'$ determines a proper set of absolute values $M_{(Y',\mathcal{L}')}$ which satisfies the product rule.  In this setting, we denote elements of $M_{(Y',\mathcal{L}')}$ by $|\cdot|_{\mathfrak{p}',\mathbf{F}}$ for $\mathfrak{p}'$ a prime divisor of $Y'$.

In particular, we can take $\mathcal{L}' = \phi^*\mathcal{L}$ for $\mathcal{L}$ an ample line bundle on $Y$.  In this case, if $\mathfrak{p}'$ is a prime divisor of $Y'$ lying over $\mathfrak{p}$ a prime divisor of $Y$, we then set
\begin{equation}\label{eqn2.10}
|x|_{\mathfrak{p}'/\mathfrak{p}} = 
|\mathrm{N}_{\mathbf{F}_{\mathfrak{p}'}/\KK_{\mathfrak{p}}}(x)|_{\mathfrak{p}, \KK}^{1/[\mathbf{F}_{\mathfrak{p}'}:\KK_{\mathfrak{p}}]} 
= |x|_{\mathfrak{p}',\mathbf{F}}^{1/[\mathbf{F}_{\mathfrak{p}'}:\KK_{\mathfrak{p}}]};
\end{equation}
here $\operatorname{N}_{\mathbf{F}_{\mathfrak{p}'}/\KK_{\mathfrak{p}}}$ denotes the field norm from $\mathbf{F}_{\mathfrak{p}'}$ to $\KK_{\mathfrak{p}}$.

As explained in \cite[\S 1.3.6]{Bombieri:Gubler}, the absolute value
\begin{equation}\label{eqn2.11}
|\cdot|_{\mathfrak{p}'/\mathfrak{p}} : \mathbf{F} \rightarrow \RR
\end{equation}
extends the absolute value
\begin{equation}\label{eqn2.12}
|\cdot|_{\mathfrak{p},\KK} : \KK \rightarrow \RR.
\end{equation}

We can also normalize the absolute values of $\mathbf{F}$ relative to $\KK$.  In particular, given a prime divisor $\mathfrak{p}'$ of $Y'$ we let $|\cdot|_{\mathfrak{p}',\KK}$ denote the absolute value
\begin{equation}\label{eqn2.13}
|x|_{\mathfrak{p}',\KK} = |x|_{\mathfrak{p}',\mathbf{F}}^{1/[\mathbf{F}:\KK]} \text{, for $x \in \mathbf{F}$,}
\end{equation}
compare with \cite[Example 1.4.13]{Bombieri:Gubler}.

\np{}\label{2.6}{\bf Height functions and field extensions.}  Since the sets $M_{(Y,\mathcal{L})}$ and $M_{(Y',\mathcal{L}')}$, defined in \S \ref{2.4} and \S \ref{2.5}, satisfy the product formula we can consider the height functions that they determine.  To compare these height functions, we first note that, as explained in \cite[Example 1.4.13]{Bombieri:Gubler}, given a prime divisor $\mathfrak{p} \subseteq Y$, the set of places of $\mathbf{F}$ lying over the place of $\KK$ determined by $\mathfrak{p}$  is in bijection with the set of prime divisors of $Y'$ lying over $\mathfrak{p}$. Given a prime divisor $\mathfrak{p}$ of $Y$ and a prime divisor $\mathfrak{p}'$ of $Y'$ we sometimes use the notation $\mathfrak{p}' | \mathfrak{p}$ to indicate that $\mathfrak{p}'$ lies above $\mathfrak{p}$.

Next, note that by \cite[Corollary 1.3.2]{Bombieri:Gubler}, given a prime divisor $\mathfrak{p}$ of $Y$, we have
\begin{equation}\label{eqn2.14}
\sum_{\mathfrak{p}' | \mathfrak{p}} [\mathbf{F}_{\mathfrak{p}'} : \KK_{\mathfrak{p}}] = [\mathbf{F}:\KK].
\end{equation}
Also, since the absolute value
$|\cdot|_{\mathfrak{p}'/\mathfrak{p}} = |\cdot|_{\mathfrak{p}',\mathbf{F}}^{1/[\mathbf{F}_{\mathfrak{p}'} :\KK_{\mathfrak{p}}] } $
extends the absolute value $|\cdot|_{\mathfrak{p}, \KK}$, it follows, using \eqref{eqn2.14} and \eqref{eqn2.10}, that if $H_{\PP^n_{\KK}(1)}(\cdot)$ denotes the height function on $\PP^n(\KK)$ determined by $M_{(Y,\mathcal{L})}$ and if $H_{\Osh_{\PP^n_{\mathbf{F}}}(1)}(\cdot)$ denotes the height function on $\PP^n(\mathbf{F})$ determined by $M_{(Y',\mathcal{L}')}$, then 
\begin{equation}\label{eqn2.15}
H_{\Osh_{\PP^n_\KK}(1)}(y) = H_{\Osh_{\PP^n_{\mathbf{F}}}(1)}(y)^{1/[\mathbf{F} : \KK]}, 
\end{equation}
for all $y = [y_0:\dots:y_n] \in \PP^n(\KK)$.

At the level of logarithmic heights, the relation \eqref{eqn2.15} implies that
\begin{equation}\label{eqn2.16}
[\mathbf{F} : \KK]  h_{\Osh_{\PP^n_{\KK}}(1)}(y) = h_{\Osh_{\PP^n_{\mathbf{F}}}(1)}(y),
\end{equation}
for all $y \in \PP^n(\KK)$.

\np{}\label{2.7}{\bf Height functions, field extensions, and projective varieties.} Considerations similar to \S \ref{2.6} apply to an arbitrary projective variety $X$ over $\KK$.  In particular, given a very ample line bundle $L$ on $X$, and defined over $\KK$, let $H_L(\cdot)$ and $h_L(\cdot)$ denote, respectively, the multiplicative and logarithmic heights obtained by pulling back $H_{\Osh_{\PP^n_{\KK}(1)}}(\cdot)$ and $h_{\Osh_{\PP^n_{\KK}}(1)}(\cdot)$ with respect to some embedding $X \hookrightarrow \PP^n_{\KK}$ afforded by $L$.  

Similarly, if $X_{\mathbf{F}}= X\times_{\spec \KK} \spec \mathbf{F}$ denotes the base change of $X$ with respect to the extension $\mathbf{F} / \KK$ and $L_{\mathbf{F}}$ the pullback of $L$ to $X_{\mathbf{F}}$, then we denote by $H_{L_{\mathbf{F}}}(\cdot)$ and $h_{L_{\mathbf{F}}}(\cdot)$, respectively, the height functions determined by pulling back $H_{\Osh_{\PP^n_{\mathbf{F}}}(1)}(\cdot)$ and $h_{\Osh_{\PP^n_{\mathbf{F}}}(1)}(\cdot)$, respectively, with respect to any embedding $X \hookrightarrow \PP^n_{\mathbf{F}}$ afforded by $L_{\mathbf{F}}$.

From this point of view, we have the relations
\begin{equation}\label{eqn1.10}
H_{L}(y) = H_{L_{\mathbf{F}}}(y)^{1/ [\mathbf{F}:\KK]}
\end{equation}
and
\begin{equation}\label{eqn1.11}
[\mathbf{F}:\KK] h_L(y) = h_{L_{\mathbf{F}}}(y),
\end{equation}
for all $y \in X(\KK)$, compare with \eqref{eqn2.15} and \eqref{eqn2.16}.

\section{Distance functions and approximation constants}\label{3}

Let $\KK$ be a field of characteristic zero and $|\cdot|$ a non-archimedean absolute value on $\KK$.  In this section we define, by analogy with \cite{McKinnon-Roth}, projective distance functions and approximation constants, with respect to $|\cdot|$, for pairs $(X,L)$ for $X$ a projective variety over $\KK$ and $L$ a very ample line bundle on $X$.  In \S \ref{4} we record some properties of these distance functions which are needed  in subsequent sections.

\noindent{\bf Projective distance functions.}   We define (normalized) distance functions for projective varieties over $\KK$ with respect to non-archimedean places of $\KK$.  The reason we include a discussion about normalizing our distance functions is so that we can state Lemma \ref{lemma3.1} below which we need later in \S \ref{4}.  In that section we also record various properties of these distance functions; these properties are needed in \S \ref{9} where we establish Corollary \ref{corollary1.3}.

\np{}\label{3.5}  Given a nontrivial absolute value $|\cdot|_{v,\KK}$ on $\KK$,  we also denote by $|\cdot|_{v,\KK}$ an extension of $|\cdot|_{v,\KK}$ to $\overline{\KK}$ an algebraic closure of $\KK$.  We fix a collection of  non-archimedean places of $\KK$ which we denote by $M_{\KK}$.  Let $\mathbf{F} / \KK$, $\mathbf{F} \subseteq \overline{\KK}$, be a finite dimensional extension and $w$ a place of $\mathbf{F}$ lying over $v$.  We let $\mathbf{F}_w$ and $\mathbf{K}_v$ denote, respectively, the completions of $\mathbf{F}$ and $\KK$ with respect to $w$ and $v$.  Finally, we write $M_\mathbf{F}$ for the set of places of $\mathbf{F}$ lying above elements of $M_\KK$.

\np{}\label{3.6}  For each $v \in M_\KK$ and each $w \in M_\mathbf{F}$, lying over $v$, we define absolute values by
\begin{equation}\label{eqn3.1}
||x||_w = |\mathrm{N}_{\mathbf{F}_w / \KK_v}(x)|_{v,\KK}
\end{equation}
and
\begin{equation}\label{eqn3.2}
|x|_{w,\KK} = |\mathrm{N}_{\mathbf{F}_w / \KK_v}(x)|_{v,\KK}^{1/[\mathbf{F} : \KK]};
\end{equation}
here $\operatorname{N}_{\mathbf{F}_w/\KK_v}$ denotes the field norm from $\mathbf{F}_w$ to $\KK_v$.  The absolute value $|\cdot|_{w,\KK}$ is a representative of $w$ and the absolute value
\begin{equation}\label{eqn3.3}
||\cdot||_w^{1/[\mathbf{F}_w : \KK_v]}
\end{equation}
is a representative of $w$ extending $|\cdot|_{v,\KK}$.  In particular, 
\begin{equation}\label{eqn3.4}
|x|_{v,\KK} = ||x||_v = ||x||_w^{1/[\mathbf{F}_w : \KK_v]},
\end{equation}
for $x \in \KK$, \cite[1.3.6, p.~6]{Bombieri:Gubler}.

\np{}\label{3.7}  We can use the absolute values defined by \eqref{eqn3.1} to define projective distance functions corresponding to places $w \in M_{\mathbf{F}}$.  When we do this, we say that this distance function is \emph{normalized relative to $\mathbf{F}$} and we denote it by $d_w(\cdot,\cdot)$ or $d_v(\cdot,\cdot)_\mathbf{F}$ for a place $v$ lying below $w$  if we wish to emphasize the fact that it is normalized relative to $\mathbf{F}$.  More specifically, given $w \in M_{\mathbf{F}}$, we fix an extension of $||\cdot||_w$ to $\overline{\KK}$ and we define 
$$d_w (\cdot, \cdot) : \PP^n(\overline{\KK}) \times \PP^n(\overline{\KK}) \rightarrow [0,1] $$
by
\begin{equation}\label{eqn3.5}
d_v(x,y)_{\mathbf{F}} = d_w(x,y) = \frac{\max_{0\leq i < j \leq n}(||x_i y_j - x_j y_i||_w )}{\max_{0\leq i \leq n}(||x_i||_w) \max_{0 \leq j \leq n}(||y_j||_w)},
\end{equation}
for $x = [x_0:\dots:x_n]$ and $y=[y_0:\dots : y_n] \in \PP^n(\overline{\KK})$ and $v \in M_\KK$ lying below $w$.

We remark:

\begin{lemma}\label{lemma3.1}
If $v \in M_\KK$ and $w \in M_\mathbf{F}$ lies over $v$ then
$$ d_v(\cdot,\cdot)_\KK^{[\mathbf{F}_w : \KK_v]} = d_v(\cdot,\cdot)_{\mathbf{F}} = d_w(\cdot,\cdot).$$
\end{lemma}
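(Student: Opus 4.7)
The plan is to reduce the claim to the scalar identity
\[
\|x\|_w \;=\; \|x\|_v^{[\mathbf{F}_w:\KK_v]}\qquad (x\in\overline{\KK}),
\]
after which the three projective distance functions differ only in an overall exponent that factors through numerator and denominator of \eqref{eqn3.5}.

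First I would write down the three distance functions in fully explicit form.  For $d_v(\cdot,\cdot)_\KK$ one takes $\mathbf{F}=\KK$ in \eqref{eqn3.1}, so $||\cdot||_v=|\cdot|_{v,\KK}$ and the distance is the ratio of maxima of $|x_iy_j-x_jy_i|_{v,\KK}$ over products of maxima of $|x_i|_{v,\KK}$ and $|y_j|_{v,\KK}$.  For $d_v(\cdot,\cdot)_\mathbf{F}$ and $d_w(\cdot,\cdot)$, one uses $||\cdot||_w$ as defined in \eqref{eqn3.1}; these two coincide by the very definition in \S \ref{3.7}, so the right-hand equality in the lemma is immediate.

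The substantive content is therefore the left-hand equality.  Here I would argue the key pointwise identity as follows.  By \eqref{eqn3.4}, the absolute value $||\cdot||_w^{1/[\mathbf{F}_w:\KK_v]}$ on $\mathbf{F}$ extends $|\cdot|_{v,\KK}$.  Since $|\cdot|_{v,\KK}$ has been extended once and for all to $\overline{\KK}$ in \S \ref{3.5}, and the extension of $||\cdot||_w$ to $\overline{\KK}$ fixed in \S \ref{3.7} is chosen compatibly, the relation
\[
||x||_w \;=\; |x|_{v,\KK}^{[\mathbf{F}_w:\KK_v]}
\]
holds for every $x\in\overline{\KK}$, not merely for $x\in\mathbf{F}$.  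Substituting this into the definition of $d_w$, the common exponent $[\mathbf{F}_w:\KK_v]$ appears in every factor of both numerator and denominator, so it can be pulled out of the ratio, giving exactly $d_v(\cdot,\cdot)_\KK^{[\mathbf{F}_w:\KK_v]}$.

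The main obstacle here is really only bookkeeping: one must be careful that the chosen extensions of $|\cdot|_{v,\KK}$ and $||\cdot||_w$ to $\overline{\KK}$ are compatible in the sense used above (both restrict to the same place on any intermediate finite extension).  Once that compatibility is noted from \S \ref{3.5}--\ref{3.7}, the proof is a one-line computation of pulling the exponent out of \eqref{eqn3.5}.
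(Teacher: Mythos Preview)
Your proposal is correct and is exactly the approach the paper has in mind: the paper's own proof is the single line ``Immediate from the definitions,'' and what you have written is precisely the unpacking of that line via \eqref{eqn3.4} and the homogeneity of \eqref{eqn3.5}. The only point worth noting is that pulling the exponent through the $\max$ is harmless because $t\mapsto t^{[\mathbf{F}_w:\KK_v]}$ is increasing on $[0,\infty)$.
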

\begin{proof}
Immediate from the definitions.
\end{proof}

\np{}\label{3.8}  If $X$ is a projective variety defined over $\KK$ and $L$ a very ample line bundle on $X$, then every embedding
\begin{equation}\label{eqn3.5'}
X \hookrightarrow \PP^n_{\KK},
\end{equation}
obtained by choosing a basis of a very ample linear system with $\dim V = n+1$, determines, by pulling back the distance function defined in \eqref{eqn3.5}, a projective distance function on $X$
\begin{equation}\label{eqn3.5''}
d_v(\cdot,\cdot) = d_{|\cdot|_v}(\cdot,\cdot) : X(\overline{\KK}) \times X(\overline{\KK}) \rightarrow [0,1].
\end{equation}
Such functions also behave in the same way as Lemma \ref{lemma3.1} with respect to normalizing with respect to field extensions.

\medskip

\np{}\label{3.8'}\noindent{\bf Approximation constants.}   Let $(X,L)$ be a pair consisting of a projective variety $X$ and $L$ a very ample line bundle on $X$.  We assume that $(X,L)$ is defined over $\KK$.  Fix an embedding $X \hookrightarrow \PP^n_{\KK}$ determined by a very ample linear system $V \subseteq \H^0(X,L)$, fix a set $M_\KK$ of absolute values on $\KK$ satisfying the product rule and, as in \S \ref{2.3}, let $H_L(\cdot)$ denote the multiplicative height of $X$ with respect to $L = \Osh_{\PP^n_\KK}(1)|_X$ and our set $M_\KK$.  
Given a non-archimedean absolute value, $|\cdot|_v \in M_\KK$ let $d_{|\cdot|_v}(\cdot,\cdot)$ denote the corresponding distance function defined in \eqref{eqn3.5''}.  Here we define approximation constants and our definition extends that given in \cite[Definitions 2.8 and 2.9]{McKinnon-Roth}.

\noindent
{\bf Definition.}  Fix $x \in X(\overline{\KK})$. For every infinite sequence $\{y_i\} \subseteq X(\KK)$ of distinct points with unbounded height and $d_{|\cdot|_v}(x,y_i) \to 0$ (which we sometimes denote by $\{y_i\} \to x$) define:
\begin{equation}\label{alpha:x:seq} \alpha_x(\{y_i\},L) = \inf \{\gamma \in \RR : d_{|\cdot|_v}(x,y_i)^{\gamma} H_L(y_i) \text{ is bounded from above} \} \end{equation}
and define 
$$\alpha_{x,X}(L; |\cdot|_{v})=\alpha_x(L;|\cdot|_v) = \alpha_x(L) $$
by:
\begin{multline}\label{alpha:x}  \alpha_x(L) = \inf \{ \alpha_x(\{y_i\},L) :  \{y_i\}\subseteq X(\KK) \text{ is an infinite sequence } \\
\text{of distinct points with unbounded height and $d_{|\cdot|_v}(x,y_i) \to 0$}  \} \text{.}\end{multline}
   
The intuitive idea is that $\alpha_x(L)$ provides a measure of the cost of approximating $x \in X(\overline{\KK})$ by infinite sequences of distinct $\KK$-rational points with unbounded height and converging to $x$.
 
\noindent
{\bf Remarks.}  
\begin{enumerate}
\item[(a)]{As a matter of convention, if $\{y_i\} \subseteq X(\KK)$ is an infinite sequence of distinct points with unbounded height and not converging to $x$, then we define $\alpha_x(\{y_i\},L) = \infty$.  Similarly, if there exists no infinite sequence of distinct points $\{y_i\}\subseteq X(\KK)$ with unbounded height and converging to $x$, then we define $\alpha_x(L) = \infty$.}
\item[(b)]{In the definitions \eqref{alpha:x:seq} and \eqref{alpha:x}, the reason that we restrict our attention to infinite sequences of distinct points with unbounded height is that, in general, for instance when $\KK$ is a function field, there may exist infinite sequences of distinct points with bounded height.  On the other hand, if $\{y_i\} \subseteq X(\KK)$ is an infinite sequence of distinct points with unbounded height, then $\{y_i\}$ admits a subsequence $\{y_i'\}$ with $H_L(y_i') \to \infty$.}  
\item[(c)]{Let $\{y_i\} \subseteq X(\KK)$ be an infinite sequence of distinct points with unbounded height and $\{y_i\} \to x$.  It then follows from the definitions that if $\{y_i'\} \subseteq X(\KK)$ is a subsequence of distinct points with unbounded height then $\{y_i'\} \to x$ and $\alpha_x(\{y_i'\},L) \leq \alpha_x(\{y_i\},L)$, for all $x \in X(\overline{\KK})$.   }
\item[(d)]{If $\KK$ is a number field and $\{y_i\} \subseteq X(\KK)$ an infinite sequence of distinct points then the sequence $\{H_L(y_i)\}$ is unbounded and thus the definitions \eqref{alpha:x:seq} and \eqref{alpha:x} extend those given in \cite[Definitions 2.8 and 2.9]{McKinnon-Roth}.}
\end{enumerate}

\np{}\label{3.9}{\bf Example.}\label{PP1:eg} In the case that $\KK$ is a number field and $x \in \PP^n(\KK)$, then in \cite[Lemma 2.13]{McKinnon-Roth}, it is shown that $\alpha_x(\Osh_{\PP^n_{\KK}}(1)) = 1$.   The same is true for the case that $\KK$ is the function field of a smooth projective complex curve $C$.  To see why, as in the proof of \cite[Lemma 2.13]{McKinnon-Roth}, we have $\alpha_x(\Osh_{\PP^n_{\KK}}(1)) \geq 1$.  To see that this lower bound can be achieved, as in \cite[Lemma 2.13]{McKinnon-Roth} it suffices to treat the case $n=1$ and $x = [1:0]$. To see that $\alpha_x(\Osh_{\PP^1_{\KK}}(1)) = 1$,  let $p$ be the point of $C$ corresponding to the absolute value which we used to define $\alpha_x(\Osh_{\PP^n_{\KK}}(1))$.  Let $g$ be the genus of $C$ and let $d>2g$ be an integer.  Let $s \in \KK$ denote the global section of $\Osh_C(dp)$ with $\mathrm{div}(s)=dp$.  Then $\ord_p(s) = d$ and $\ord_q(s) = 0$ for $p \not= q$.  Since $d > 2g$, $h^0(C,\Osh_C(dp))\geq g+2$ and thus $|dp|$ is base point free so we can find a $t \in \KK$ which is a global section of $\Osh_C(dp)$ and which does not vanish at $p$.  Let $y_i=[1,s^it^{-i}]$, for $i \geq 0$.  Then $d_{|\cdot|_p}(x,y_i) \to 0$ and $H_{\Osh_{\PP^1_{\KK}}}(y_i) \to \infty$ as $i \to \infty$ and also $d_{|\cdot|_p}(x,y_i) H_{\Osh_{\PP^1_{\KK}}(1)}(y_i) = 1$ for all $i$.   

\np{}\label{3.10'}{\bf Example.}  
Let $\KK$ be the function field of a smooth projective curve over an algebraically closed field with characteristic $0$.  In \S 9 we compute $\alpha_x(L)$ for $x \in A(\overline{\KK})$, for $A$ an abelian variety defined over $\KK$ and $L$ a very ample line bundle on $A$.  Specifically, we  establish an approximation theorem similar to \cite[p.~98]{Serre:Mordell-Weil-Lectures}, proven there in the number field setting, and it follows that $\alpha_x(L) = \infty$, see Theorem \ref{theorem9.4} and Corollary \ref{corollary9.4'}.

\np{}\label{3.10}{\bf Example.}\label{curve:function:field:setting}  Let $C$ be a non-singular curve defined over $\KK$, the function field of a smooth projective curve  over an algebraically closed field with characteristic $0$ and suppose that the genus of $C$ is at least one.  If $L$ is a very ample line bundle on $C$ and $x \in C(\overline{\KK})$, then $\alpha_x(L) = \infty$ as we prove in Theorem \ref{theorem9.1'}.  To get a sense for some of the ideas involved, we consider the Abel-Jacobi map $C \rightarrow A$, here $A = \operatorname{Jac}(C)$ is the Jacobian of $C$.  Let $\Theta$ be the theta divisor of $A$ and identify $C$ with its image in $A$.  Then, in this notation, we have that $\alpha_x(\Theta^{\otimes 3}|_C) \geq \alpha_x(\Theta^{\otimes 3})$, compare with \cite[Proposition 2.14 (c)]{McKinnon-Roth}.  Now note that since $\alpha_x(\Theta^{\otimes 3}) = \infty$, see \S \ref{3.10'} or Theorem \ref{theorem9.4} and Corollary \ref{corollary9.4'},  it follows that $\alpha_x(\Theta^{\otimes 3}|_C) = \infty$ too.  Finally, it follows, from our definition of approximation constants in conjunction with  properties of height functions, that $\alpha_x(L) = \infty$ for all very ample line bundles $L$ on $C$.
The same is true for singular curves with geometric genus at least $1$, see Theorem \ref{theorem9.1'}.

\section{Properties of projective distance functions}\label{4}

In this section we record some properties of the distance functions defined in \eqref{eqn3.5} and \eqref{eqn3.5''}.   In the number field setting, similar properties were established in \cite[\S 2]{McKinnon-Roth}.  The only major difference between what we do here and what is done there is that we work with bounded sets instead of compact sets. 
We omit the proof of these properties since they are evident adaptations of the corresponding statements given in \cite[\S 2]{McKinnon-Roth}.  The main reason that we record these properties is that they are needed to  establish Lemma \ref{lemma6.1} and Theorem \ref{theorem9.4}.  Throughout this section we fix a field $\KK$ of characteristic zero, an algebraic closure $\overline{\KK}$ of $\KK$ and a place $v$ of $\KK$ which we extend to $\overline{\KK}$ and also denote by $v$.  In what follows we also fix an absolute value $|\cdot| = |\cdot|_v$ on $\overline{\KK}$ representing $v$.

\np{}\label{4.1}   Let $X$ be a projective variety over $\KK$. Besides the Zariski topology on $X(\overline{\KK})$ we have a topology which is induced by that of $\KK$ with respect to $v$.  We call this topology the \emph{$v$-topology} on $X$ and it is the topology which is induced locally by open balls with respect to closed embeddings of affine open subsets of $X$ into affine spaces and the max norm with respect to $|\cdot|$.  This topology is independent of the embeddings and the equivalence class of $|\cdot|$.  As $\KK$ need not be compact with respect to the $v$-topology, the $v$-topology on $X(\overline{\KK})$ need not be compact in general.  On the other hand, to understand $X(\overline{\KK})$ in terms of $|\cdot|$ it is useful instead to work with the concept of \emph{bounded sets} of $X$, in the sense of \cite[Definition 2.6.2]{Bombieri:Gubler} or \cite[\S 6.1]{Serre:Mordell-Weil-Lectures}.

\np{}\label{4.2}  For the sake of completeness, we include some discussion about \emph{bounded sets} and we first consider the affine case. To this end, let $U$ be an affine $\KK$-variety with coordinate ring $\KK[U]$.  We say that a subset $E \subseteq U(\overline{\KK})$ is \emph{bounded} in $U$, if for every $f \in \KK[U]$, the function $|f|$ is bounded on $E$.  If $\{f_1,\dots, f_N\}$ are generators of $\KK[U]$ as a $\KK$-algebra and if the inequality
$$ \sup_{P \in E} \max_{j=1,\dots, N} |f_j(P)| < \infty$$
holds true for a subset $E \subseteq U(\overline{\KK})$, then $E$ is bounded in $U$, \cite[Lemma 2.2.9]{Bombieri:Gubler}.  Also if $\{U_\ell\}$ is a finite open covering of $U$ and if $E$ is bounded in $U$, then there are bounded subsets $E_\ell$ of $U_\ell$ such that $E = \bigcup_\ell E_\ell$, \cite[Lemma 2.2.10]{Bombieri:Gubler}.

Next, given an arbitrary variety $X$ over $\KK$,  a subset $E \subseteq X(\overline{\KK})$ is called \emph{bounded} in $X$, if there is a finite covering $\{U_i \}_{i \in I}$ of $X$ by affine open subsets and sets $E_i$ with $E_i \subseteq U_i(\overline{\KK})$ such that $E_i$ is bounded in $U_i$ and $E = \bigcup_{i \in I} E_i$, \cite[Definition 2.6.2]{Bombieri:Gubler}.  If $E$ is bounded in $X$, then for every finite covering $\{U_i\}_{i \in I}$ of $X$ by affine open subsets, there is a subdivision 
$$ E = \bigcup_{i \in I} E_i,$$  
with   $E_i \subseteq U_i(\overline{\KK})$
such that each $E_i$ is bounded in $U_i$, \cite[Remark 2.6.3]{Bombieri:Gubler}.

Finally, as explained in \cite[Example 2.6.5]{Bombieri:Gubler}, see also \cite[\S 6]{Serre:Mordell-Weil-Lectures}, the set $\PP^n(\overline{\KK})$ is bounded in $\PP^n_{\KK}$ and one way to see this is to use the standard affine covering
$$X_i := \{ x =[x_0:\dots : x_n] \in \PP^n_{\KK} : x_i \not = 0 \},  $$
 for $i \in \{0,\dots, n \}$, of $\PP^n_\KK$ together with the decomposition 
$$E_i := \{ x \in \PP^n_{\KK} : |x_i | = \max\limits_{j=0,\dots,n} |x_j| \} $$
of $E := \PP^n(\overline{\KK})$.

One consequence of the boundedness of $\PP^n(\overline{\KK})$ is that the set of $\overline{\KK}$-rational points $X(\overline{\KK})$ for $X$ a projective variety over $\KK$ is bounded; it also  follows that if $X$ is a projective variety defined over $\KK$ and $x \in X(\overline{\KK})$ a $\overline{\KK}$-rational point of $X$, then there exists an affine open subset $U\subseteq X$ with $x \in U(\overline{\KK})$ and a subset $E \subseteq U(\overline{\KK})$ bounded in $U$ and containing $x$.  We refer to such a subset as a \emph{bounded neighbourhood of $x$} in what follows.

\np{}\label{4.3}  The key point to establishing our desired properties of the distance functions, which we defined in \S \ref{3.7} and \S \ref{3.8}, is Lemma \ref{lemma4.1} below.  To state it, let $X$ be a variety over $\spec \KK$ and $U$ an affine open subset of $X_{\mathbf{F}} = X \times_{\spec \KK} \spec \mathbf{F}$ for some finite extension $\mathbf{F} / \KK$ with $\mathbf{F} \subseteq \overline{\KK}$.  Suppose given two collections of elements $u_1,\dots, u_r$ and $u_1',\dots, u_s'$ of $\Gamma(U,\Osh_{X_{\mathbf{F}}})$ which generate the same ideal. 
 
\begin{lemma}[Compare with {\cite[Lemma 2.2]{McKinnon-Roth}}]\label{lemma4.1}  In the above setting, the functions
\begin{equation}\label{eqn4.1}
\max(|u_1(\cdot)|_v,\dots,|u_r(\cdot)|_v)
\end{equation}
and
\begin{equation}\label{eqn4.2}
\max(|u_1'(\cdot)|_v,\dots,|u_s'(\cdot)|_v)
\end{equation}
are equivalent on every subset $E \subseteq U(\overline{\KK})$ which is bounded in $U$.
\end{lemma}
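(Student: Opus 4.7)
My plan is to reduce the statement to a direct application of the (non-archimedean) triangle inequality, using only the hypothesis that the two collections generate the same ideal of $\Gamma(U,\Osh_{X_{\mathbf{F}}})$ together with the boundedness of $E$.

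First, I would unpack the hypothesis. Since $(u_1,\dots,u_r) = (u'_1,\dots,u'_s)$ as ideals of $\Gamma(U,\Osh_{X_{\mathbf{F}}})$, there exist regular functions $a_{ij},\, b_{ji} \in \Gamma(U,\Osh_{X_{\mathbf{F}}})$ with
\[
u_i \;=\; \sum_{j=1}^{s} a_{ij}\, u'_j \qquad (1 \leq i \leq r)
\]
and
\[
u'_j \;=\; \sum_{i=1}^{r} b_{ji}\, u_i \qquad (1 \leq j \leq s).
\]
Next I would invoke the definition of boundedness in \S\ref{4.2}: because $E$ is bounded in the affine variety $U$ and each $a_{ij}$, $b_{ji}$ lies in $\Gamma(U,\Osh_{X_{\mathbf{F}}})$, the functions $|a_{ij}(\cdot)|_v$ and $|b_{ji}(\cdot)|_v$ are each bounded on $E$. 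Put
\[
A \;:=\; \max_{i,j}\, \sup_{P \in E} |a_{ij}(P)|_v, \qquad B \;:=\; \max_{j,i}\, \sup_{P \in E} |b_{ji}(P)|_v,
\]
both of which are finite.

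Now apply $|\cdot|_v$, which is non-archimedean, to the two identities. For every $P \in E$ and every $i$,
\[
|u_i(P)|_v \;=\; \Bigl|\sum_{j} a_{ij}(P)\, u'_j(P)\Bigr|_v \;\leq\; \max_{j} |a_{ij}(P)|_v \,|u'_j(P)|_v \;\leq\; A \cdot \max_{j} |u'_j(P)|_v,
\]
so that $\max_i |u_i(P)|_v \leq A \cdot \max_j |u'_j(P)|_v$. Symmetrically, $\max_j |u'_j(P)|_v \leq B \cdot \max_i |u_i(P)|_v$. These two inequalities together are exactly the statement that the functions \eqref{eqn4.1} and \eqref{eqn4.2} are equivalent on $E$.

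I do not anticipate a serious obstacle here; the only point requiring any care is making sure the coefficients $a_{ij}$ and $b_{ji}$ are genuinely global regular functions on $U$ (so that their absolute values are bounded on $E$ by the very definition of a bounded set), which is guaranteed by our standing hypothesis that $U$ is affine and the two collections generate the same ideal in the coordinate ring $\Gamma(U,\Osh_{X_{\mathbf{F}}})$. (If instead one wanted an archimedean variant of the statement, the same argument goes through with the triangle inequality in place of the non-archimedean one, at the cost of an extra multiplicative factor of $\max(r,s)$.)
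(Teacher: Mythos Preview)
Your argument is correct and is exactly the ``evident adaptation'' the paper alludes to: write each $u_i$ (resp.\ $u'_j$) as a $\Gamma(U,\Osh_{X_{\mathbf{F}}})$-combination of the other collection, use boundedness of $E$ in place of compactness to bound the coefficients, and apply the ultrametric inequality. The paper does not spell this out but simply refers to \cite[Lemma~2.2]{McKinnon-Roth} together with the discussion of bounded sets in \S\ref{4.2}, which amounts to the same proof.
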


\begin{proof}  
In light of the discussion given in \S \ref{4.2}, the proof of Lemma \ref{lemma4.1} is an evident adaptation of the proof of \cite[Lemma 2.2]{McKinnon-Roth}.
\end{proof}

\np{}\label{4.4}  Now let $L$ and $L'$ be two very ample line bundles on a projective variety $X$, defined over $\KK$, $V\subseteq \H^0(X,L)$ and $W \subseteq \H^0(X,L')$ two very ample linear systems, $s = \dim V -1$, $r = \dim W - 1$ and fix two embeddings $j : X \hookrightarrow \PP^s$ and $j' : X \hookrightarrow \PP^r$ obtained by choosing bases for $V$ and $W$ respectively.  We wish to compare the distance functions determined by the embeddings $j$ and $j'$.  We denote these distance functions by $d_v(\cdot,\cdot)$ and $d_v'(\cdot,\cdot)$ respectively.  The main point is Proposition \ref{proposition4.3} which shows that the functions $d_v(\cdot,\cdot)$ and $d_v'(\cdot,\cdot)$ are equivalent.  Before stating Proposition \ref{proposition4.3}, we record:

\begin{lemma}[Compare with {\cite[Lemma 2.3]{McKinnon-Roth}}]\label{lemma4.2}
Let $\mathbf{F}/\KK$ be a finite extension, $\mathbf{F} \subseteq \overline{\KK}$.  Then for every point $x \in X(\mathbf{F})$ and every rational map $f: \PP^s \dashrightarrow \PP^r$ defined at $j(x)$ and such that $f\circ j = j'$ near $x$, there is a subset $E \subseteq X(\overline{\KK}) \times X(\overline{\KK})$ bounded in $X \times X$ and containing $(x,x)$ such that $d_v(\cdot,\cdot)$ and $d_v'(\cdot,\cdot)$ are equivalent on $E$.
\end{lemma}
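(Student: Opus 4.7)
The plan is to reduce the claim to an application of Lemma \ref{lemma4.1} on a suitable affine open of $X \times X$ containing the diagonal point $(x,x)$, and then handle the denominators separately using the boundedness hypothesis.

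First I would set up adapted affine coordinates. Since $j(x)$ lies in some standard affine chart of $\PP^s$, we may assume after reordering that the $0$-th homogeneous coordinate of $j(x)$ is non-zero. Likewise, since $f = [f_0 : \cdots : f_r]$ is defined at $j(x)$, we may assume $f_0(j(x)) \neq 0$, which places $j'(x) = f(j(x))$ in the standard affine chart of $\PP^r$. Pulling back to $X$, choose an affine open $V \subseteq X_{\mathbf{F}}$ with $x \in V(\overline{\KK})$ on which the functions $j_0 = x_0 \circ j$ and $f_0 \circ j$ are both non-vanishing. Form the affine open $V \times V$ of $(X \times X)_{\mathbf{F}}$ which contains $(x,x)$.

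Next I would compare the numerators of the two distance functions on $V \times V$. The numerator of $d_v$ is the family of polynomials $u_{ij} = x_iy_j - x_jy_i$, while that of $d_v'$ is $u_{ij}' = f_i(x)f_j(y) - f_j(x)f_i(y)$. Dividing by the units $x_0y_0$ and $f_0(x)f_0(y)$ respectively (which are invertible in $\Gamma(V \times V, \Osh)$), we see that the $u_{0i}$ become, up to units, the differences $z_i - w_i$ of the affine coordinates $z_i = x_i/x_0$, $w_i = y_i/y_0$, while the $u_{0i}'$ become, up to units, the differences $(f_i/f_0)(z) - (f_i/f_0)(w)$ of the affine-chart coordinates of $j'$. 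Because both $j$ and $j'$ are closed embeddings of $X$, the functions $z_i$ and $(f_i/f_0)(z)$ each generate the maximal ideal at $x$ in $\Osh_{X,x}$, so after possibly shrinking $V$ one sees that both families $\{u_{ij}\}$ and $\{u_{ij}'\}$ generate the same ideal in $\Gamma(V \times V, \Osh)$, namely the ideal of the diagonal $\Delta_V \subseteq V \times V$.

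Applying Lemma \ref{lemma4.1} to these two collections, we conclude that their max norms are equivalent on every subset of $(V \times V)(\overline{\KK})$ which is bounded in $V \times V$. Now choose a bounded neighbourhood $E$ of $(x,x)$ in $V \times V$ (in the sense of \S \ref{4.2}), and note that on $E$ the denominators $\max_i |x_i|_v \max_j |y_j|_v$ and $\max_i |f_i(x)|_v \max_j |f_j(y)|_v$ of $d_v$ and $d_v'$ are both bounded above and below away from zero, since $|x_0|_v$, $|y_0|_v$, $|f_0(x)|_v$ and $|f_0(y)|_v$ are. Therefore $d_v$ and $d_v'$ are each equivalent on $E$ to the max norms of their respective numerators, and hence equivalent to each other on $E$, as required.

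The main obstacle in this argument is showing that the two families of numerator polynomials, after extracting the obvious unit factors, generate the same ideal on a common affine open containing $(x,x)$. This is the substantive geometric input and rests on the fact that both $j$ and $j'$ embed $X$; once this is in place, the rest of the proof is a formal consequence of Lemma \ref{lemma4.1} and the compatibility of boundedness with restriction to affine opens.
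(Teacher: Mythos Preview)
Your proposal is correct and follows essentially the same approach as the paper, which simply says the proof ``uses Lemma \ref{lemma4.1} and is an evident adaptation of the proof of \cite[Lemma 2.3]{McKinnon-Roth}.'' You have filled in precisely those details: both families of (dehomogenized) numerator functions cut out the diagonal in $V\times V$, Lemma \ref{lemma4.1} gives equivalence of the numerators on bounded sets, and boundedness controls the denominators. One small wording correction: what you need is not that the $z_i$ generate the maximal ideal at $x$, but that they generate $\Gamma(V,\Osh_V)$ as an algebra (which follows from $j$ being a closed embedding), so that the differences $z_i\otimes 1 - 1\otimes z_i$ generate the ideal of the diagonal; the argument goes through unchanged with this adjustment.
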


\begin{proof}
The proof of Lemma \ref{lemma4.2} uses Lemma \ref{lemma4.1} and is an evident adaptation of the proof of \cite[Lemma 2.3]{McKinnon-Roth}.
\end{proof}

As mentioned, the distance functions determined by distinct embeddings are equivalent:

\begin{proposition}[Compare with {\cite[Proposition 2.4]{McKinnon-Roth}}]\label{proposition4.3}
Let $d_v$ and $d_v'$ be two distance functions coming from different embeddings of $X$.  Then for all finite extensions $\mathbf{F}/\KK$, $\mathbf{F} \subseteq \overline{\KK}$, $d_v$ is equivalent to $d_v'$ on $X(\mathbf{F}) \times X(\mathbf{F})$.
\end{proposition}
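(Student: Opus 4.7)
The plan is to patch local equivalences coming from Lemma \ref{lemma4.2} (and an off-diagonal analogue of it) using the boundedness of $X(\mathbf{F}) \times X(\mathbf{F})$ in the projective variety $X \times X$.  The principal tool is Lemma \ref{lemma4.1}, which will be applied both on and off the diagonal.

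First, I will verify that for each pair $(x_0, y_0) \in X(\overline{\KK}) \times X(\overline{\KK})$ there is a bounded neighbourhood of $(x_0, y_0)$ in $X \times X$ on which $d_v$ and $d_v'$ are equivalent.  The diagonal case $x_0 = y_0$ is exactly Lemma \ref{lemma4.2}, since the very ampleness of both linear systems guarantees the existence of a rational map $f \colon \PP^s \dashrightarrow \PP^r$ with $f \circ j = j'$ near $x_0$ and with $f$ defined at $j(x_0)$.  For the off-diagonal case $x_0 \neq y_0$, the injectivity of both embeddings gives $d_v(x_0, y_0) > 0$ and $d_v'(x_0, y_0) > 0$.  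Applying Lemma \ref{lemma4.1} to the affine coordinate expressions for $d_v$ and $d_v'$ on a product of affine neighbourhoods of $x_0$ and $y_0$, one produces a bounded neighbourhood of $(x_0, y_0)$ on which both distance functions remain between two positive constants (and are always at most $1$), giving the desired equivalence.

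To conclude, I pass to a finite cover.  Since $X \times X$ is projective, by the discussion closing \S \ref{4.2} the set $X(\overline{\KK}) \times X(\overline{\KK})$ is bounded in $X \times X$, and in particular so is $X(\mathbf{F}) \times X(\mathbf{F})$.  By the definition of boundedness in \S \ref{4.2}, this set admits a finite decomposition $\bigcup_{k=1}^N E_k$ with each $E_k$ bounded in a product of affine opens of $X$.  The local equivalence from the previous paragraph --- which, via Lemma \ref{lemma4.1}, holds not only at a single point but uniformly on any bounded subset of the relevant affine open --- therefore yields equivalence constants $c_1^{(k)} \leq c_2^{(k)}$ on each $E_k$; setting $c_1 = \min_k c_1^{(k)}$ and $c_2 = \max_k c_2^{(k)}$ establishes the equivalence of $d_v$ and $d_v'$ on all of $X(\mathbf{F}) \times X(\mathbf{F})$.

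The main obstacle is the off-diagonal local equivalence, which Lemma \ref{lemma4.2} does not address directly.  Resolving it requires a careful application of Lemma \ref{lemma4.1} to the affine coordinate expressions of $d_v$ and $d_v'$ near $(x_0, y_0)$, uniformly controlling both the numerators and denominators appearing in the definitions of the distance functions on bounded neighbourhoods.  A secondary subtlety is the replacement of the compactness argument used in the number field case \cite{McKinnon-Roth} by the boundedness formalism of \S \ref{4.2}; this substitution is necessary because the function field completions $\KK_v$ need not be locally compact.
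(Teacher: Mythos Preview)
Your proposal is correct and follows essentially the same approach as the paper. The paper's proof simply invokes Lemma \ref{lemma4.2} together with the boundedness formalism of \S \ref{4.2} and declares the result an evident adaptation of \cite[Proposition 2.4]{McKinnon-Roth}; your separation into diagonal and off-diagonal cases, the appeal to Lemma \ref{lemma4.1} to upgrade the pointwise equivalence to one valid on any bounded subset of the relevant affine open, and the patching via a finite bounded decomposition of $X(\mathbf{F}) \times X(\mathbf{F})$ is exactly how that adaptation goes.
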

\begin{proof} The proof of Proposition \ref{proposition4.3} uses Lemma \ref{lemma4.2} and, considering the discussion of \S \ref{4.2}, is an evident adaptation of the proof of \cite[Proposition 2.4]{McKinnon-Roth}.
\end{proof}

\np{}\label{4.5}   Proposition \ref{proposition4.5} below, which is useful for working with distance functions locally, is a consequence of the following useful auxiliary observation.

\begin{lemma}[Compare with {\cite[Lemma 2.5]{McKinnon-Roth}}]\label{lemma4.4}
Let $x$ be a point of $X(\overline{\KK})$ and $\mathbf{F} \subseteq \overline{\KK}$ a finite extension of $\KK$ over which $x$ is defined.  Then there exists an affine open subset $U$ of $X_{\mathbf{F}} = X \times_{\spec \KK} \spec \mathbf{F}$ containing $x$ and elements $u_1,\dots,u_r$ of $\Gamma(U,\Osh_{X_{\mathbf{F}}})$ which generate the maximal ideal of $x$ and positive real constants $c \leq C$ so that
$$ c d_v(x,y) \leq \min(1,\max(|u_1(y)|_v,\dots,|u_r(y)|_v) ) \leq C d_v(x,y),$$
for all $y \in U(\mathbf{F})$.
\end{lemma}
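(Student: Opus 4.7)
The plan is to take $U$ to be a standard affine chart of the ambient projective space containing $x$, and the generators $u_k$ of $\mathfrak{m}_x$ to be the affine coordinates on that chart recentered at $x$. The lemma then reduces to a direct calculation with the definition of $d_v$, using the ultrametric property of $|\cdot|_v$ to extract matching upper and lower bounds for the numerator and denominator.

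First, fix the closed embedding $X \hookrightarrow \PP^n_{\KK}$ that realizes $d_v$, and base change it to $\mathbf{F}$. By permuting the homogeneous coordinates, we may assume that $x_0(x) \neq 0$, so $x = [1 : a_1 : \cdots : a_n]$ with $a_k \in \mathbf{F}$. Set $U := X_{\mathbf{F}} \cap \{x_0 \neq 0\}$, an affine open neighborhood of $x$, and $u_k := (x_k/x_0)|_U - a_k \in \Gamma(U, \Osh_{X_{\mathbf{F}}})$ for $k = 1, \ldots, n$. Since $x$ is $\mathbf{F}$-rational with these affine coordinates, the $u_k$ generate $\mathfrak{m}_x$. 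Writing $A' := \max(1, \max_k |a_k|_v)$, for $y = [y_0 : \cdots : y_n] \in U(\mathbf{F})$ the identities $x_0 y_k - x_k y_0 = y_0 u_k(y)$ and $x_i y_k - x_k y_i = y_0 (a_i u_k(y) - a_k u_i(y))$ for $1 \leq i < k \leq n$, together with the non-archimedean triangle inequality, give
$$ |y_0|_v \max_k |u_k(y)|_v \; \leq \; \max_{i < k} |x_i y_k - x_k y_i|_v \; \leq \; A' \, |y_0|_v \max_k |u_k(y)|_v $$
for the numerator of $d_v(x, y)$, and
$$ \tfrac{1}{A'} |y_0|_v \max\bigl(1, \max_k|u_k(y)|_v\bigr) \; \leq \; \max_j |y_j|_v \; \leq \; A' \, |y_0|_v \max\bigl(1, \max_k |u_k(y)|_v\bigr) $$
for the variable factor of the denominator. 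Since $\max_i |x_i|_v = A'$ and $\max_k |u_k(y)|_v / \max(1, \max_k |u_k(y)|_v) = \min(1, \max_k |u_k(y)|_v)$, these combine to give $\tfrac{1}{A'^2} \min(1, \max_k |u_k(y)|_v) \leq d_v(x, y) \leq A' \min(1, \max_k |u_k(y)|_v)$, which is the asserted inequality with $c = 1/A'$ and $C = A'^2$.

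The main technical point, and the main obstacle, is the lower bound on $\max_j|y_j|_v$: one must invoke the strict-ultrametric identity $|u_{k_0}(y) + a_{k_0}|_v = |u_{k_0}(y)|_v$, valid whenever $|u_{k_0}(y)|_v > |a_{k_0}|_v$, at an index $k_0$ achieving $\max_k|u_k(y)|_v$, in order to see that $\max_j|y_j|_v$ grows proportionally with $|y_0|_v \max_k|u_k(y)|_v$ in the far regime $\max_k|u_k(y)|_v > A'$. In the transitional range $1 \leq \max_k|u_k(y)|_v \leq A'$ the ultrametric identity degenerates to an inequality, but the crude bound $\max_j|y_j|_v \geq |y_0|_v$ already suffices, because $d_v(x,y) \in [0,1]$ and the target $\min(1, \max_k|u_k(y)|_v)$ equals $1$ throughout that range, so the constants $c = 1/A'$, $C = A'^2$ cover this case automatically.
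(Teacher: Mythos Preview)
Your proof is correct and is precisely the explicit coordinate computation that the paper gestures at when it says ``evident adaptation of the proof of \cite[Lemma 2.5]{McKinnon-Roth}''; the choice of the standard affine chart, the recentered coordinates $u_k = x_k/x_0 - a_k$, and the ultrametric case split on the size of $\max_k|u_k(y)|_v$ are exactly the ingredients of that argument. The one cosmetic difference is that the paper cites Lemma~\ref{lemma3.1} to reconcile the normalizations $d_v(\cdot,\cdot)_\KK$ and $d_v(\cdot,\cdot)_{\mathbf{F}}$, whereas you sidestep this by computing with a single fixed absolute value $|\cdot|_v$ on $\overline{\KK}$ throughout, which is entirely legitimate and arguably cleaner.
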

\begin{proof}
This is an evident adaptation of the proof of \cite[Lemma 2.5]{McKinnon-Roth} and uses Lemma \ref{lemma3.1}.
\end{proof}

Lemma \ref{lemma4.4} is needed to prove the following  useful result.

\begin{proposition}[Compare with {\cite[Lemma 2.6]{McKinnon-Roth}}]\label{proposition4.5}
Let $x$ be a point of $X(\overline{\KK})$ and $\mathbf{F} \subseteq \overline{\KK}$ a finite extension of $\KK$ over which $x$ is defined.  Let $U$ be an affine open subset of $X_{\mathbf{F}} = X \times_{\spec \KK} \spec \mathbf{F}$ containing $x$.  Let $u_1,\dots,u_r$ be elements of $\Gamma(U,\Osh_{X_{\mathbf{F}}})$ which generate the maximal ideal of $x$.  Then for every sequence of points $\{x_i \} \subseteq U(\KK)$ such that $d_v(x,x_i) \to 0$ as $i \to \infty$, the functions
\begin{equation}\label{eqn4.6}
d_v(x,\cdot) 
\end{equation}
and
\begin{equation}\label{eqn4.7}
\max(|u_1(\cdot)|_v,\dots, |u_r(\cdot)|_v)
\end{equation}
are equivalent on $\{x_i\}$.  

In particular, there exists positive constants $c \leq C$ so that for all $i \geq 0$, we have that
$$c d_v(x,x_i) \leq \max(|u_1(x_i)|_v,\dots, |u_r(x_i)|_v) \leq C d_v(x,x_i). $$
\end{proposition}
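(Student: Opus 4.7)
The plan is to reduce Proposition \ref{proposition4.5} to Lemma \ref{lemma4.4} by invoking Lemma \ref{lemma4.1} to compare the given generators $u_1,\dots,u_r$ with a system of generators produced by Lemma \ref{lemma4.4}. To begin, I would apply Lemma \ref{lemma4.4} to obtain an affine open $V \subseteq X_\mathbf{F}$ containing $x$, elements $v_1,\dots,v_s \in \Gamma(V,\Osh_{X_\mathbf{F}})$ generating the maximal ideal of $x$, and positive constants $c_0 \leq C_0$ satisfying
$$c_0\, d_v(x,y) \leq \min\bigl(1,\max_k |v_k(y)|_v\bigr) \leq C_0\, d_v(x,y)$$
for every $y \in V(\mathbf{F})$.

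Next, I would choose an affine open $W \subseteq U \cap V$ containing $x$, small enough that both families $\{u_j\}$ and $\{v_k\}$, restricted to $W$, generate the ideal of the closed point $x$ in $\Gamma(W,\Osh_{X_\mathbf{F}})$; shrinking is permissible because each family generates the maximal ideal of the local ring $\Osh_{X_\mathbf{F},x}$, hence generates the ideal of $x$ on some common affine open neighborhood. With this arrangement, both families generate the same ideal in $\Gamma(W,\Osh_{X_\mathbf{F}})$, so Lemma \ref{lemma4.1} is directly applicable: on every subset of $W(\overline{\KK})$ which is bounded in $W$, the functions $\max_j |u_j(\cdot)|_v$ and $\max_k |v_k(\cdot)|_v$ are equivalent.

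The hypothesis $d_v(x,x_i) \to 0$ forces convergence of $\{x_i\}$ to $x$ in the $v$-topology, so after discarding finitely many terms the sequence lies in $W(\KK)$ and, in view of \S \ref{4.2}, inside a bounded neighborhood of $x$ in $W$. For such large $i$, Lemma \ref{lemma4.1} provides positive constants with $\max_j |u_j(x_i)|_v$ and $\max_k |v_k(x_i)|_v$ comparable. Moreover the quantity $\max_k |v_k(x_i)|_v$ is eventually smaller than $1$, so the $\min(1,\cdot)$ in the Lemma \ref{lemma4.4} estimate may be dropped. Chaining the two equivalences yields positive constants $c \leq C$ with
$$c\, d_v(x,x_i) \leq \max_j |u_j(x_i)|_v \leq C\, d_v(x,x_i)$$
for all sufficiently large $i$. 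Finitely many early indices are absorbed by enlarging $C$ and shrinking $c$, using that the sequence is distinct so $x_i=x$ occurs at most once and both quantities are positive otherwise.

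The main obstacle I anticipate is verifying that the sequence genuinely enters a bounded neighborhood of $x$ lying inside $W$: this requires translating the distance-to-zero hypothesis into $v$-topological convergence within an affine chart and then invoking the existence of bounded neighborhoods from \S \ref{4.2}. A secondary technical point is passing from ``generating the maximal ideal of $x$'' as a stalk-level condition in $\Osh_{X_\mathbf{F},x}$ to an actual equality of ideals in $\Gamma(W,\Osh_{X_\mathbf{F}})$ for a sufficiently small affine open $W$, but this is a standard localization argument.
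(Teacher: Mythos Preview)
Your proposal is correct and takes essentially the same approach as the paper, which gives only a one-line pointer stating that the result is an evident adaptation of \cite[Lemma 2.6]{McKinnon-Roth} relying on Lemma \ref{lemma4.4}. Your use of Lemma \ref{lemma4.1} to compare the given generators with those produced by Lemma \ref{lemma4.4} is precisely the natural ingredient needed to carry out that adaptation, and the obstacles you flag (entering a bounded neighborhood inside $W$, passing from stalks to an affine chart) are routine.
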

 
 \begin{proof}
This is an evident adaptation of \cite[Lemma 2.6]{McKinnon-Roth} and relies on Lemma \ref{lemma4.4}.
 \end{proof} 

\section{Wang's effective Schmidt's subspace theorem}\label{5}

In subsequent sections we study the approximation constants that we defined in \S \ref{3.8} for the case that $\KK$ is a function field.  Our approach relies on a slight extension of a theorem of Julie Wang \cite{Wang:2004} and here we describe this extension.  First we make some preliminary remarks.

\np{}\label{5.1}  Our setting is that of \S \ref{2.4}.  In particular, $\overline{\kk}$ is an algebraically closed field of characteristic zero, $Y$ is an irreducible projective variety over $\overline{\kk}$, non-singular in codimension $1$ and we have fixed an ample line bundle $\mathcal{L}$ on $Y$.  We also let $M_{(Y,\mathcal{L})}$ denote the set of absolute values of the form
\begin{equation}\label{5.1}
|\cdot|_{\mathfrak{p},\KK} : \KK \rightarrow \RR,
\end{equation}
for $\KK$ the field of fractions of $Y$ and $\mathfrak{p}$ a prime divisor of $Y$, defined in \eqref{eqn2.5}.

\np{}\label{5.2}{\bf $\mathfrak{p}$-adic metrics.}  Important to our extension of the subspace theorem is the concept of $\mathfrak{p}$-adic metrics.  Such metrics are determined by prime divisors of $Y$.  To define such metrics first let $\PP^n_{\KK} = \operatorname{Proj} \KK[x_0,\dots,x_n]$.  Every prime divisor $\mathfrak{p}$ of $Y$ determines a $\mathfrak{p}$-adic metric on the tautological line bundle $\Osh_{\PP^n_{\KK}}(1)$ given locally by:
\begin{equation}\label{eqn5.2}
||\sigma(y)||_{\mathfrak{p},\KK} = \frac{|\sigma(y)|_{\mathfrak{p},\KK}}{\max_{j,k}|a_j y_k|_{\mathfrak{p},\KK}} = \min_{j,k}\left( \frac{|\sigma(y)|_{\mathfrak{p},\KK}}{|a_j y_k|_{\mathfrak{p},\KK}} \right),
\end{equation}
for nonzero 
$$ \sigma = \sum_{j=0}^n a_j x_j \in \H^0(\PP^n,\Osh_{\PP^n_{\KK}}(1)), \text{ with } a_j \in \KK.$$

Fix an extension of $|\cdot|_{\mathfrak{p},\KK}$ to $\overline{\KK}$ and, by abuse of notation, denote this extension also by $|\cdot|_{\mathfrak{p},\KK}$.  Having fixed such an extension, we obtain a $\mathfrak{p}$-adic metric on $\Osh_{\PP^n_{\mathbf{F}}}(1)$ for all finite extensions $\mathbf{F} / \KK$, $\mathbf{F} \subseteq \overline{\KK}$.  We also denote this metric by $||\cdot||_{\mathfrak{p},\KK}$ and it is defined by \eqref{eqn5.2} for all nonzero sections
$$\sigma = \sum_{j=0}^n a_j x_j \in \H^0(\PP^n_{\mathbf{F}},\Osh_{\PP^n_{\mathbf{F}}}(1)), \text{ with } a_j \in \mathbf{F}. $$

If $X \subseteq \PP^n_{\KK}$ is a subvariety and $L = \Osh_{\PP^n_{\KK}}(1)|_X$, then we let $||\cdot||_{\mathfrak{p},\KK}$ denote the $\mathfrak{p}$-adic metric $||\cdot||_{\mathfrak{p},\KK}$ on $L$ obtained by pulling back the metric \eqref{eqn5.2}.  Similarly, given a finite extension $\mathbf{F} / \KK$, $\mathbf{F} \subseteq \overline{\KK}$, we let $||\cdot||_{\mathfrak{p},\KK}$ denote the $\mathfrak{p}$-adic metric on $L_{\mathbf{F}}$, the pull-back of $L$ to $X_{\mathbf{F}} = X \times_{\spec \KK} \spec \mathbf{F}$, obtained by a fixed extension of $|\cdot|_{\mathfrak{p},\KK}$ to an absolute value on $\overline{\KK}$.

\np{}\label{5.3} {\bf Weil functions.} We also need to make some remarks concerning Weil functions.  To do so, let 
$$\sigma = \sum_{j=0}^n a_j x_j \in \H^0(\PP^n_{\KK},\Osh_{\PP^n_{\KK}}(1)) \text{, with $a_j \in \KK$}, $$ be a nonzero section and let $\operatorname{Supp}(\sigma)$ denote the hyperplane that it determines.  The \emph{Weil function} of $\sigma$ with respect to a prime divisor $\mathfrak{p}$ of $Y$ has domain $\PP^n(\KK) \backslash \operatorname{Supp}(\sigma)(\KK)$ and is defined by
\begin{equation}\label{eqn5.3}
\lambda_{\sigma,|\cdot|_{\mathfrak{p},\KK}}(y) = ( \ord_{\mathfrak{p}}(\sigma(y)) - \min_j(\ord_{\mathfrak{p}}(y_j))- \min_j(\operatorname{ord}_{\mathfrak{p}}(a_j)))\deg_{\mathcal{L}}(\mathfrak{p}).
\end{equation}

The Weil function $\lambda_{\sigma,|\cdot|_{\mathfrak{p}},\KK}$ and the $\mathfrak{p}$-adic metric $||\cdot||_{\mathfrak{p},\KK}$ are related by
\begin{equation}\label{eqn5.4}
\lambda_{\sigma,|\cdot|_{\mathfrak{p},\KK}}(y) = \log_{\mathbf{c}} || \sigma(y)||_{\mathfrak{p},\KK},
\end{equation}
for each $y \in \PP^n(\KK)\backslash \operatorname{Supp}(\sigma)(\KK)$.

When we fix an extension of $|\cdot|_{\mathfrak{p},\KK}$ to an absolute value $|\cdot|_{\mathfrak{p},\KK} : \overline{\KK} \rightarrow \RR$ we can use the relation \eqref{eqn5.4} to consider Weil functions of nonzero sections
$$ \sigma = \sum_{j=0}^n a_j x_j \in \H^0(\PP^n_{\mathbf{F}},\Osh_{\PP^n_{\mathbf{F}}}(1)), \text{ with $a_j \in \mathbf{F}$,} $$
for $\mathbf{F} / \KK$, $\mathbf{F} \subseteq \overline{\KK}$, a finite extension.

In particular, given  a nonzero section $\sigma \in \H^0(\PP^n_{\mathbf{F}},\Osh_{\PP^n_{\mathbf{F}}}(1))$, we define its Weil function with respect to $\mathfrak{p}$ to be the function $\lambda_{\sigma,|\cdot|_{\mathfrak{p}},\KK}$ defined by
\begin{equation}\label{eqn5.5}
\lambda_{\sigma,|\cdot|_{\mathfrak{p},\KK}}(y) = \log_{\mathbf{c}}||\sigma(y)||_{\mathfrak{p},\KK},
\end{equation}
for all $y \in \PP^n(\mathbf{F})\backslash \operatorname{Supp}(\sigma)(\mathbf{F})$.

\np{}\label{5.4}{\bf The subspace theorem.}  Before we establish our extension of the subspace theorem we state the main result of \cite{Wang:2004}.  Here we state this result in a slightly more general form than considered in \cite{Wang:2004} and \cite{Ru:Wang:2012}.  Indeed, there $Y$ is assumed to be non-singular and the absolute values are considered with respect to a very ample line bundle on $Y$.  Here we assume that $Y$ is non-singular in codimension $1$ and we consider absolute values with respect to an ample line bundle $\mathcal{L}$ on $Y$.  This more general setting is important to what we do here.  

\begin{theorem}[See {\cite[p.~811]{Wang:2004}} or {\cite[Theorem 17]{Ru:Wang:2012}}]\label{theorem5.1}
Fix a finite set $S$ of prime divisors of $Y$, a collection of linear forms $\sigma_1,\dots,\sigma_q$ in $\KK[x_0,\dots,x_n]$ and let $\PP^n_{\KK} = \operatorname{Proj} \KK[x_0,\dots,x_n]$.  There exists an effectively computable finite union of proper linear subspaces  $Z \subsetneq \PP^n_\KK$ such that the following holds true:
Given $\epsilon > 0$, there exists effectively computable constants $a_\epsilon$ and $b_\epsilon$ such that for every $x \in \PP^n(\KK) \backslash Z(\KK)$ either:
\begin{enumerate}
\item{$h_{\Osh_{\PP^n_{\KK}}(1)}(x) \leq a_\epsilon$ or}
\item{$\sum\limits_{\mathfrak{p} \in S} \max\limits_J \sum\limits_{j \in J} \lambda_{\sigma_j,|\cdot|_{\mathfrak{p},\KK}}(x) \leq (n+1+\epsilon) h_{\Osh_{\PP^n_{\KK}}(1)}(x) + b_{\epsilon}$;}
here the maximum is taken over all subsets $J \subseteq \{1,\dots,q\}$ such that the $\sigma_j$, for $j\in J$, are linearly independent.
\end{enumerate}
\end{theorem}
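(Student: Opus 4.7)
The plan is to deduce Theorem~\ref{theorem5.1} from the form established in \cite{Wang:2004} and \cite[Theorem 17]{Ru:Wang:2012}, which assumes $Y$ smooth and $\mathcal{L}$ very ample. Two reductions bridge the gap. First, to pass from an ample $\mathcal{L}$ to a very ample one, I would replace $\mathcal{L}$ by a sufficiently large tensor power $\mathcal{L}^{\otimes k}$. This operation uniformly multiplies each weight $\deg_{\mathcal{L}}(\mathfrak{p})$ by $k$, and consequently scales both the Weil functions $\lambda_{\sigma_j, |\cdot|_{\mathfrak{p},\KK}}$ and the logarithmic height $h_{\Osh_{\PP^n_\KK}(1)}$ by the common factor $k$; the conclusion of the theorem for $\mathcal{L}$ is therefore equivalent to the conclusion for $\mathcal{L}^{\otimes k}$ after rescaling the effective constants $a_\epsilon$ and $b_\epsilon$.

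Second, to remove the smoothness hypothesis on $Y$, I would choose a resolution of singularities $\pi:Y' \to Y$ with $Y'$ smooth projective, available by Hironaka in characteristic zero. Since $Y$ is non-singular in codimension one, $\pi$ is an isomorphism outside a closed subset of codimension at least two, so each prime divisor $\mathfrak{p}\subseteq Y$ admits a unique strict transform $\mathfrak{p}'\subseteq Y'$ with $\ord_{\mathfrak{p}'} = \ord_{\mathfrak{p}}$ on $\KK$. The projection formula then gives $\deg_{\pi^*\mathcal{L}}(\mathfrak{p}') = \deg_{\mathcal{L}}(\mathfrak{p})$, while each exceptional prime divisor of $\pi$ has $\pi^*\mathcal{L}$-degree zero. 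To apply Wang's theorem on the smooth variety $Y'$, I would fix any very ample line bundle $\mathcal{A}$ on $Y'$ and set $\mathcal{H} = (\pi^*\mathcal{L})^{\otimes N} \otimes \mathcal{A}$, which is very ample on $Y'$ for $N\gg 0$; the Weil functions and heights attached to $\mathcal{H}$ then decompose additively, at each strict transform $\mathfrak{p}'$ identified with $\mathfrak{p}$, as $N$ times the $(Y,\mathcal{L})$-quantity plus the corresponding $\mathcal{A}$-quantity.

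Feeding \cite[Theorem 17]{Ru:Wang:2012} into $(Y',\mathcal{H})$ with the transported set of prime divisors $S' := \{\mathfrak{p}':\mathfrak{p}\in S\}$ and the same linear forms $\sigma_1,\dots,\sigma_q$, expanding both sides via the additive decomposition above, and using the elementary estimate
\begin{equation*}
\max_{J} \sum_{j\in J}\bigl(NA_j + B_j\bigr) \;\geq\; N\max_{J} \sum_{j\in J} A_j \;+\; \min_{J} \sum_{j\in J} B_j
\end{equation*}
to isolate the $(Y,\mathcal{L})$-part of the left-hand side, then dividing through by $N$, would produce the desired inequality together with residual terms of order $\frac{1}{N}$ involving the $\mathcal{A}$-height and Weil functions of the $\sigma_j$ against $\mathcal{A}$. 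Since any two height functions on $\PP^n$ associated to ample line bundles are comparable up to a multiplicative constant, these residuals are bounded by $O(1/N)h_{\Osh_{\PP^n_\KK}(1)}(x) + O(1/N)$, and can therefore be absorbed into the $\epsilon\cdot h_{\Osh_{\PP^n_\KK}(1)}(x)$ term (after a small loss in $\epsilon$) and into enlargements of $a_\epsilon$ and $b_\epsilon$, provided $N$ is chosen large enough in terms of $\epsilon$ and the data $(S,\sigma_1,\dots,\sigma_q)$. The main obstacle is exactly this last quantitative step: tracking the $\mathcal{A}$-dependent residuals with sufficient care to preserve the effectivity of the constants $a_\epsilon, b_\epsilon$ asserted by the theorem.
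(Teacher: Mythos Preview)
The paper's own proof is a one-line citation: it asserts that Wang's argument already works under the weaker hypotheses (non-singular in codimension~$1$, $\mathcal{L}$ merely ample), pointing to the remark at the bottom of \cite[p.~812]{Wang:2004} and to \cite[Theorem~17 and Remark~1]{Ru:Wang:2012}. So the paper does not reduce to the smooth very-ample case; it simply observes that Wang's proof never used more. Your proposal is therefore a genuinely different route: an explicit reduction by tensor powers and resolution of singularities. The tensor-power step is clean and correct. The resolution step is a reasonable strategy, and your bookkeeping with $\mathcal{H}=(\pi^*\mathcal{L})^{\otimes N}\otimes\mathcal{A}$ is on the right track, including the observation that the exceptional set $Z$ depends only on the linear forms and hence is unchanged.

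There is, however, a real gap at the final comparison. You invoke ``any two height functions on $\PP^n$ associated to ample line bundles are comparable up to a multiplicative constant'' to bound the $\mathcal{A}$-residual by $O(1/N)\,h_{\Osh_{\PP^n_\KK}(1)}(x)+O(1/N)$. That principle, as usually stated, compares $h_{M_1}$ and $h_{M_2}$ for two line bundles on the \emph{target} with a \emph{fixed} set of absolute values on $\KK$. Here the line bundle on $\PP^n$ is fixed at $\Osh(1)$; what varies is the family of absolute values, coming from $(Y,\mathcal{L})$ versus $(Y',\mathcal{A})$. These are different product-formula structures on the same field, and there is no constant $C$ with $\deg_{\mathcal{A}}(\mathfrak{p}')\le C\,\deg_{\pi^*\mathcal{L}}(\mathfrak{p}')$ for all prime divisors $\mathfrak{p}'\subseteq Y'$: for any exceptional $\mathfrak{p}'$ one has $\deg_{\pi^*\mathcal{L}}(\mathfrak{p}')=0$ while $\deg_{\mathcal{A}}(\mathfrak{p}')>0$. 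So the termwise comparison fails, and the standard height-machine statement does not apply.

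The inequality $h^{(Y',\mathcal{A})}(x)\le C\,h^{(Y,\mathcal{L})}(x)$ you need is nonetheless true, but it requires a geometric argument. For $x=[x_0:\dots:x_n]$ set $D=-\min_i\operatorname{div}_Y(x_i)$ and $D'=-\min_i\operatorname{div}_{Y'}(x_i)$, both effective. One checks coefficientwise that $D'\le\pi^*D$ (using $\operatorname{div}_{Y'}(x_i)=\pi^*\operatorname{div}_Y(x_i)$ and the inequality $-\min_i\sum_C a_{C,i}\,m_C\le\sum_C(-\min_i a_{C,i})\,m_C$ for $m_C\ge 0$). Then, with $d=\dim Y$,
\[
h^{(Y',\mathcal{A})}(x)=\mathcal{A}^{d-1}\cdot D'\ \le\ \mathcal{A}^{d-1}\cdot\pi^*D\ =\ (\pi_*\mathcal{A}^{d-1})\cdot D,
\]
by the projection formula, and since $\mathcal{L}$ is ample on $Y$ one can choose $C$ with $C\,\mathcal{L}^{d-1}-\pi_*(\mathcal{A}^{d-1})$ nef, giving $(\pi_*\mathcal{A}^{d-1})\cdot D\le C\,\mathcal{L}^{d-1}\cdot D=C\,h^{(Y,\mathcal{L})}(x)$. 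With this in hand your absorption argument goes through and effectivity is preserved. But as written, the justification you give for the residual bound is the wrong lemma, and the step is not complete.
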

\begin{proof}
This is implied by the main result of \cite{Wang:2004} and the remark \cite[bottom of p.~812]{Wang:2004}.  See also \cite[Theorem 17]{Ru:Wang:2012} and \cite[Remark 1]{Ru:Wang:2012}.
\end{proof}

\noindent
{\bf Remark.}  As explained in \cite[Remark 18]{Ru:Wang:2012}, the constants $a_\epsilon$ and  $b_\epsilon$ appearing in Theorem \ref{theorem5.1} depend on $\epsilon$, the degree, with respect to $\mathcal{L}$, of a  canonical class of $Y$, the sum of the degrees of the $\mathfrak{p} \in S$ with respect to $\mathcal{L}$, and the heights of the linear forms $\sigma_1,\dots,\sigma_q \in \KK[x_0,\dots,x_n]$, respect to $\mathcal{L}$, as defined by \cite[(1.5) and (1.6)]{Ru:Wang:2012}.  For a description of the union of linear subspaces $Z$ appearing in Theorem \ref{theorem5.1}, we refer to \cite[\S 3]{Wang:2004}.

\np{}\label{5.5} 
By changing the order of quantifiers slightly in Theorem \ref{theorem5.1} and using our conventions about Weil functions given in \S \ref{5.3}, especially the definition given in \eqref{eqn5.5}, we can extend Wang's subspace theorem so as to allow for linear forms having coefficients in $\overline{\KK}$.   We state this result in Theorem \ref{theorem5.2} below and I am grateful  to Julie Wang for her interest in an earlier version of this work, for telling me that such an extension should follow from her \cite[p.~811]{Wang:2004}, and for suggesting a method of proof.  Having, in \S \ref{5.2} and \S \ref{5.3}, properly defined the concepts we need the proof of Theorem \ref{theorem5.2} is standard and can be compared with \cite[Remark 7.2.3]{Bombieri:Gubler}.  

\begin{theorem}\label{theorem5.2}
Fix a finite set $S$ of prime divisors of $Y$, fix a collection of linear forms $\sigma_1,\dots,\sigma_q \in \overline{\KK}[x_0,\dots,x_n]$ and let $\PP^n_{\overline{\KK}} = \operatorname{Proj} \overline{\KK}[x_0,\dots,x_n]$.  Then given $\epsilon > 0$, there exists an effectively computable finite union of proper linear subspaces $W \subsetneq \PP^n_\KK$ and effectively computable positive constants $a_\epsilon$ and $b_\epsilon$ such that for every $x \in \PP^n(\KK) \backslash W(\KK)$ either:
\begin{enumerate}
\item{$h_{\Osh_{\PP^n_{\KK}}(1)}(x) \leq a_{\epsilon}$ or}
\item{$\sum\limits_{\mathfrak{p} \in S} \max\limits_J \sum\limits_{j \in J} \lambda_{\sigma_j,|\cdot|_{\mathfrak{p},\KK}}(x)  \leq (n+1+\epsilon)h_{\Osh_{\PP^n_{\KK}}(1)}(x) + b_\epsilon $;}
\end{enumerate}
here the maximum is taken over all subsets $J \subseteq \{1,\dots, q\}$ such that the $\sigma_j$, for $j \in J$, are linearly independent.
\end{theorem}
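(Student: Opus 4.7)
The plan is to reduce to Theorem~\ref{theorem5.1} by passing to a finite Galois extension $\mathbf{F}/\KK$ containing all coefficients of $\sigma_1,\dots,\sigma_q$, applying Wang's theorem there, and descending. First I would fix $\mathbf{F} \subseteq \overline{\KK}$ finite Galois over $\KK$ with $\sigma_j \in \mathbf{F}[x_0,\dots,x_n]$ for each $j$; let $\phi:Y' \to Y$ be the normalization of $Y$ in $\mathbf{F}$ (still non-singular in codimension one since normal); set $\mathcal{L}'=\phi^*\mathcal{L}$, which is ample; and let $S'$ be the finite set of prime divisors of $Y'$ lying above primes in $S$.

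Then I apply Theorem~\ref{theorem5.1} to $(Y',\mathcal{L}',S',\{\sigma_1,\dots,\sigma_q\})$ with a parameter $\epsilon'>0$ chosen so that the leading constant comes out right after renormalization. This produces an effectively computable finite union $Z\subsetneq\PP^n_\mathbf{F}$ of proper linear subspaces and constants $a_{\epsilon'},b_{\epsilon'}$ such that for every $x\in\PP^n(\mathbf{F})\setminus Z(\mathbf{F})$ with sufficiently large $\mathbf{F}$-height,
\begin{equation*}
\sum_{\mathfrak{p}'\in S'}\max_J\sum_{j\in J}\lambda_{\sigma_j,|\cdot|_{\mathfrak{p}',\mathbf{F}}}(x)\leq (n+1+\epsilon')h_{\Osh_{\PP^n_\mathbf{F}}(1)}(x)+b_{\epsilon'}.
\end{equation*}
Specializing to $x\in\PP^n(\KK)\subseteq\PP^n(\mathbf{F})$ and using $h_{\Osh_{\PP^n_\mathbf{F}}(1)}(x)=[\mathbf{F}:\KK]\,h_{\Osh_{\PP^n_\KK}(1)}(x)$ from \eqref{eqn2.16}, together with the relation $\lambda_{\sigma_j,|\cdot|_{\mathfrak{p},\KK}}(x)=[\mathbf{F}_{\mathfrak{p}_0}:\KK_\mathfrak{p}]^{-1}\lambda_{\sigma_j,|\cdot|_{\mathfrak{p}_0,\mathbf{F}}}(x)$ obtained from \eqref{eqn2.10} and \eqref{eqn5.5} (where $\mathfrak{p}_0\mid\mathfrak{p}$ is the prime of $Y'$ selected by the fixed extension of $|\cdot|_{\mathfrak{p},\KK}$), and using non-negativity of Weil functions (since $||\sigma(y)||_{\mathfrak{p},\KK}\leq 1$) together with $[\mathbf{F}_{\mathfrak{p}_0}:\KK_\mathfrak{p}]\geq 1$, I obtain
\begin{equation*}
\sum_{\mathfrak{p}\in S}\max_J\sum_{j\in J}\lambda_{\sigma_j,|\cdot|_{\mathfrak{p},\KK}}(x)\leq\sum_{\mathfrak{p}'\in S'}\max_J\sum_{j\in J}\lambda_{\sigma_j,|\cdot|_{\mathfrak{p}',\mathbf{F}}}(x),
\end{equation*}
which combined with Wang's bound gives the desired inequality, with $\epsilon'$ and $b_{\epsilon'}$ adjusted by the global degree $[\mathbf{F}:\KK]$.

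For the exceptional set I would pass from $Z\subsetneq\PP^n_\mathbf{F}$ to a $\KK$-rational union of proper linear subspaces $W\subsetneq\PP^n_\KK$ by Weil restriction: each $\mathbf{F}$-linear piece $L\subseteq\PP^n_\mathbf{F}$ of $Z$ is cut out by equations $\ell_i\in\mathbf{F}[x_0,\dots,x_n]$, and after decomposing the coefficients in a $\KK$-basis of $\mathbf{F}$ each $\ell_i$ splits into $[\mathbf{F}:\KK]$ $\KK$-linear equations whose common zero-locus defines a proper $\KK$-linear subspace $L^\KK\subseteq\PP^n_\KK$ with $L^\KK(\KK)=L(\mathbf{F})\cap\PP^n(\KK)$. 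Taking $W$ to be the union of the $L^\KK$ yields the required effectively computable exceptional set: if $x\in\PP^n(\KK)\setminus W(\KK)$ then $x\notin Z(\mathbf{F})$ and Wang's bound applies. The main obstacle lies in the delicate bookkeeping of the local degrees $[\mathbf{F}_{\mathfrak{p}_0}:\KK_\mathfrak{p}]$ against the global degree $[\mathbf{F}:\KK]$ needed to recover the exact constant $(n+1+\epsilon)$ rather than a multiple of it; per the reference to \cite[Remark 7.2.3]{Bombieri:Gubler}, this is a standard but careful normalization calculation.
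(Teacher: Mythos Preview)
Your reduction to Theorem~\ref{theorem5.1} over a Galois extension $\mathbf{F}/\KK$ is the right idea, but the inequality you prove is too crude and the gap you flag as ``delicate bookkeeping'' is in fact the whole content of the argument. Concretely: your bound
\[
\sum_{\mathfrak{p}\in S}\max_J\sum_{j\in J}\lambda_{\sigma_j,|\cdot|_{\mathfrak{p},\KK}}(x)\;\leq\;\sum_{\mathfrak{p}'\in S'}\max_J\sum_{j\in J}\lambda_{\sigma_j,|\cdot|_{\mathfrak{p}',\mathbf{F}}}(x)
\]
is correct (the Weil functions are non-negative), but after applying Wang over $\mathbf{F}$ and using \eqref{eqn2.16} you obtain a leading coefficient $(n+1+\epsilon')[\mathbf{F}:\KK]$ in front of $h_{\Osh_{\PP^n_\KK}(1)}(x)$. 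No choice of $\epsilon'>0$ can make $(n+1+\epsilon')[\mathbf{F}:\KK]=n+1+\epsilon$ once $[\mathbf{F}:\KK]>1$ and $\epsilon$ is small; the loss is by a factor of $[\mathbf{F}:\KK]$, not by an additive constant, so it cannot be absorbed into $\epsilon$ or $b_\epsilon$.

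What is missing is the Galois-conjugate device (this is exactly what \cite[Remark~7.2.3]{Bombieri:Gubler} does, and what the paper does here): for each $\mathfrak{p}'\in S'$ lying over $\mathfrak{p}$ choose $g_{\mathfrak{p}'/\mathfrak{p}}\in\operatorname{Gal}(\mathbf{F}/\KK)$ with $|x|_{\mathfrak{p},\KK}=|g_{\mathfrak{p}'/\mathfrak{p}}(x)|_{\mathfrak{p}'/\mathfrak{p}}$ on $\mathbf{F}$, and set $\sigma_{\mathfrak{p}',j}=g_{\mathfrak{p}'/\mathfrak{p}}(\sigma_j)$. Since $x\in\PP^n(\KK)$ has $\KK$-rational coordinates, one gets $\lambda_{\sigma_j,|\cdot|_{\mathfrak{p},\KK}}(x)\cdot[\mathbf{F}_{\mathfrak{p}'}:\KK_\mathfrak{p}]=\lambda_{\sigma_{\mathfrak{p}',j},|\cdot|_{\mathfrak{p}',\mathbf{F}}}(x)$ for \emph{every} $\mathfrak{p}'\mid\mathfrak{p}$, and summing over $\mathfrak{p}'\mid\mathfrak{p}$ using \eqref{eqn2.14} yields the exact identity
\[
[\mathbf{F}:\KK]\sum_{\mathfrak{p}\in S}\max_J\sum_{j\in J}\lambda_{\sigma_j,|\cdot|_{\mathfrak{p},\KK}}(x)\;=\;\sum_{\mathfrak{p}'\in S'}\max_J\sum_{j\in J}\lambda_{\sigma_{\mathfrak{p}',j},|\cdot|_{\mathfrak{p}',\mathbf{F}}}(x).
\]
Now apply Theorem~\ref{theorem5.1} over $\mathbf{F}$ to the \emph{enlarged} family $\{\sigma_{\mathfrak{p}',j}:\mathfrak{p}'\in S',\,1\leq j\leq q\}$; the factor $[\mathbf{F}:\KK]$ on the left cancels exactly against the one coming from \eqref{eqn2.16}, and the leading constant $(n+1+\epsilon)$ survives intact. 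The point is that you must use a \emph{different} (Galois-conjugate) linear form at each $\mathfrak{p}'$ over $\mathfrak{p}$, not the same $\sigma_j$ everywhere; only then do the local degrees sum to the global degree rather than being thrown away. Your descent of the exceptional set via Weil restriction is fine and is a clean alternative to the paper's construction (which instead replaces each $\mathbf{F}$-linear component by the $\KK$-linear span of its $\KK$-points violating the desired inequality).
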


\noindent{\bf Remark.}  In Theorem \ref{theorem5.2}, the Weil functions are given by \eqref{eqn5.5} and they depend on our fixed choice of extension $|\cdot|_{\mathfrak{p},\KK} : \overline{\KK} \rightarrow \RR$ of the absolute value $|\cdot|_{\mathfrak{p},\KK} : \KK \rightarrow \RR$ to $\overline{\KK}$. 

\begin{proof}[Proof of Theorem \ref{theorem5.2}]
Let $\mathbf{F} / \KK$, $\mathbf{F} \subseteq \overline{\KK}$ be a finite Galois extension containing the coefficients of each of the $\sigma_j$ and let $\phi : Y' \rightarrow Y$ be the normalization of $Y$ in $\mathbf{F}$.  Let $S'$ be the set of prime divisors of $Y'$ lying over the elements of $S$.  For each $\mathfrak{p} \in S$ and each $\mathfrak{p}' \in S'$ lying over $\mathfrak{p}$ recall that the absolute value $|\cdot|_{\mathfrak{p}'/\mathfrak{p}} : \FF \rightarrow \RR$, given by \eqref{eqn2.11}, extends the absolute value $|\cdot|_{\mathfrak{p},\KK}$.

Furthermore the extension $\mathbf{F}/\KK$ is Galois.  Thus, by \cite[Corollary 1.3.5]{Bombieri:Gubler}, there exists for each $\mathfrak{p} \in S$ and each $\mathfrak{p}' \in S'$ lying over $\mathfrak{p}$ an $g_{\mathfrak{p}'/\mathfrak{p}} \in \operatorname{Gal}(\mathbf{F} /\KK)$ so that 
\begin{equation}\label{eqn5.6}
|x|_{\mathfrak{p},\KK} = |g_{\mathfrak{p}'/\mathfrak{p}}(x)|_{\mathfrak{p}'/\mathfrak{p}} \text{, for $x \in \mathbf{F}$.}
\end{equation}
On the other hand, considering \eqref{eqn2.10}, we have
\begin{equation}\label{eqn5.7}
|\cdot|_{\mathfrak{p}'/\mathfrak{p}} = |\cdot|_{\mathfrak{p}',\mathbf{F}}^{1/[\mathbf{F}_{\mathfrak{p}'}:\KK_{\mathfrak{p}}]}
\end{equation}
and it follows that
\begin{equation}\label{eqn5.8}
|x|_{\mathfrak{p},\KK} = |g_{\mathfrak{p}'/\mathfrak{p}}(x)|_{\mathfrak{p}',\mathbf{F}}^{1/[\mathbf{F}_{\mathfrak{p}'} : \KK_{\mathfrak{p}}]} \text{, for all $x \in \mathbf{F}$.}
\end{equation}

For each $\mathfrak{p} \in S$ and each $\mathfrak{p}' \in S'$ lying over $\mathfrak{p}$ let $g_{\mathfrak{p}'/\mathfrak{p}} \in \operatorname{Gal}(\mathbf{F} / \KK)$ be so that \eqref{eqn5.6} holds true and set
\begin{equation}\label{eqn5.9}
\sigma_{\mathfrak{p}',j} = g_{\mathfrak{p}'/\mathfrak{p}}(\sigma_j).
\end{equation}
Then $\sigma_{\mathfrak{p}',j}$ is the linear form in $\mathbf{F}[x_0,\dots,x_n]$ obtained by applying $g_{\mathfrak{p}'/\mathfrak{p}}$ to the coefficients of $\sigma_j$. 

Let $x \in \PP^n(\KK)$ be such that $x \not \in \operatorname{Supp}(\sigma_j)$ for $j=1,\dots,q$.  Then, considering 
 \eqref{eqn5.8}, the definition \eqref{eqn5.5} and the relation \eqref{eqn2.14}, it follows that
\begin{equation}\label{eqn5.10}
\left(\sum\limits_{\mathfrak{p} \in S} \max\limits_J \sum\limits_{j \in J} \lambda_{\sigma_j,|\cdot|_{\mathfrak{p},\KK}}(x) \right)[\mathbf{F} : \KK] = \sum\limits_{\mathfrak{p}' \in S'} \max\limits_J \sum\limits_{j \in J} \lambda_{\sigma_{\mathfrak{p}',j},|\cdot|_{\mathfrak{p}',\mathbf{F}}} (x);
\end{equation}
here the maximum in the left hand side of \eqref{eqn5.10} is taken over all $J \subseteq \{1,\dots,q\}$ so that the $\sigma_j$, $j \in J$, are linearly independent, whereas the maximum in the righthand side of equation \eqref{eqn5.10} is taken over all $J \subseteq \{1,\dots,q\}$ so that the $\sigma_{\mathfrak{p}',j}$ for $j \in J$ and fixed $\mathfrak{p}'$, are linearly independent.

The righthand side of \eqref{eqn5.10} is at most
\begin{equation}\label{eqn5.11}
\sum\limits_{\mathfrak{p}' \in S'} \max\limits_J \sum\limits_{(\mathfrak{q}',j) \in J'} \lambda_{\sigma_{\mathfrak{q}',j,|\cdot|_{\mathfrak{p}',\mathbf{F}}}}(x);
\end{equation}
 here the maximum in \eqref{eqn5.11} is taken over all subsets $J' \subseteq \{ (\mathfrak{q}',j) : \mathfrak{q}' \in S', 1 \leq j \leq q \}$ for which the $\sigma_{\mathfrak{q}',j}$ are linearly independent.
 
Considering \eqref{eqn5.11}, \eqref{eqn5.10}, and \eqref{eqn2.16}, it follows, by applying Theorem \ref{theorem5.1} over $\mathbf{F}$ with respect to the linear forms $\sigma_{\mathfrak{p}',j}$, $\mathfrak{p}' \in S'$, $j=1,\dots q$, that there exists an effectively computable union of linear subspaces $Z \subseteq \PP^n_{\mathbf{F}}$ so that for all $\epsilon > 0$ there exists effectively computable constants $a_\epsilon$ and $b_\epsilon$ such that for every $x \in \PP^n(\KK) \backslash Z(\KK)$ either
\begin{enumerate}
\item[(a)]{$h_{\Osh_{\PP^n_{\KK}}(1)}(x) \leq \frac{a_\epsilon}{[\mathbf{F}:\KK]}$ or}
\item[(b)]{$\sum\limits_{\mathfrak{p} \in S} \max\limits_J \sum\limits_{j \in J} \lambda_{\sigma_j,|\cdot|_{\mathfrak{p},\KK}} (x) \leq (n+1+\epsilon) h_{\Osh_{\PP^n_{\KK}}(1)}(x) + \frac{b_\epsilon}{[\mathbf{F}:\KK]}$,}
\end{enumerate}
where the maximum in (b) above is taken over all $J \subseteq \{1,\dots,q\}$ for which the $\sigma_j$ are linearly independent.  In particular the above hold for our given fixed $\epsilon > 0$.  To produce such a $W$ defined over $\KK$, write $Z = \bigcup_i \Lambda_i$ for linear subspaces $\Lambda_i \subseteq  \PP^n_{\mathbf{F}}$ and for each $i$ replace $\Lambda_i$ by the linear span of all solutions $x \in \Lambda_i(\KK)\bigcap \PP^n(\KK)$ to the system:
\begin{enumerate}
\item[(a')]{$h_{\Osh_{\PP^n_{\KK}}(1)}(x) > \frac{a_\epsilon}{[\mathbf{F}:\KK]}$ and}
\item[(b')]{$\sum\limits_{\mathfrak{p} \in S} \max\limits_J \sum\limits_{j \in J} \lambda_{\sigma_j, |\cdot|_{\mathfrak{p},\KK}}(x) > (n+1+\epsilon) h_{\Osh_{\PP^n_{\KK}(1)}}(x) + \frac{b_\epsilon}{[\mathbf{F}:\KK]}$.}
\end{enumerate}
The union of such linear spaces $W$ is defined over $\KK$ and the conclusion of Theorem \ref{theorem5.2} holds true for all $x \in \PP^n(\KK) \backslash W(\KK)$.
\end{proof}

\np{}\label{5.6}  We now consider consequences of Theorem \ref{theorem5.2}.  To begin with we have the following result which we state in multiplicative form.

\begin{corollary}\label{corollary5.3}  Let $\mathbf{F} / \KK$, $\mathbf{F} \subseteq \overline{\KK}$, be a finite extension, fix a finite set $S$ of prime divisors of $Y$, and fix a collection of linearly independent linear forms $\sigma_1,\dots,\sigma_q \in \mathbf{F} [x_0,\dots,x_n]$.  Then given $\epsilon > 0$, there exists a proper subvariety $Z \subsetneq \PP^n_{\KK}$ and positive constants $A_\epsilon$ and $B_\epsilon$ such that if $y \in \PP^n(\KK)$ satisfies the conditions
\begin{enumerate}
\item{$H_{\Osh_{\PP^n_{\KK}}(1)}(y) > A_{\epsilon}$; and}
\item{$\prod_{\mathfrak{p}\in S}   \prod_{j =1}^q ||\sigma_j(y)||_{\mathfrak{p},\KK} < B_{\epsilon} H_{\Osh_{\PP^n_{\KK}}(1)}(y)^{-n-1-\epsilon}$; and}
\item{$y \not \in \operatorname{Supp}(\sigma_i)$, for $i = 1,\dots, q$,}
\end{enumerate}
then $y \in Z(\KK)$.
\end{corollary}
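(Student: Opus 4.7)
The plan is to deduce Corollary \ref{corollary5.3} as the multiplicative reformulation of the contrapositive of Theorem \ref{theorem5.2}, via the dictionary $-\log_{\mathbf{c}} H_{\Osh_{\PP^n_{\KK}}(1)}(y) = h_{\Osh_{\PP^n_{\KK}}(1)}(y)$ from \eqref{eqn2.9} and $\lambda_{\sigma_j,|\cdot|_{\mathfrak{p},\KK}}(y) = \log_{\mathbf{c}} ||\sigma_j(y)||_{\mathfrak{p},\KK}$ from \eqref{eqn5.5}, together with the fact that, because $0<\mathbf{c}<1$, the function $\log_{\mathbf{c}}$ reverses inequalities.

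First, I would let $W \subsetneq \PP^n_{\KK}$ and $a_\epsilon, b_\epsilon > 0$ be the exceptional subvariety and constants supplied by Theorem \ref{theorem5.2} applied to the given data $S$, $\sigma_1,\dots,\sigma_q$ and $\epsilon$, and then set
$$Z := W, \qquad A_\epsilon := \mathbf{c}^{-a_\epsilon}, \qquad B_\epsilon := \mathbf{c}^{b_\epsilon}.$$
These are positive and calibrated so that condition (1) of the corollary is equivalent to $h_{\Osh_{\PP^n_{\KK}}(1)}(y) > a_\epsilon$, and condition (2), upon taking $\log_{\mathbf{c}}$ of both sides, is equivalent to
$$\sum_{\mathfrak{p}\in S}\sum_{j=1}^q \lambda_{\sigma_j,|\cdot|_{\mathfrak{p},\KK}}(y) \;>\; (n+1+\epsilon)\, h_{\Osh_{\PP^n_{\KK}}(1)}(y) + b_\epsilon.$$

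Second, I would collapse the maximum appearing in Theorem \ref{theorem5.2} to this plain sum. The defining formula \eqref{eqn5.2} for the $\mathfrak{p}$-adic metric, combined with the strong triangle inequality, gives $||\sigma_j(y)||_{\mathfrak{p},\KK}\leq 1$ whenever $y \notin \operatorname{Supp}(\sigma_j)$, which is precisely where hypothesis (3) places $y$; hence each Weil function value $\lambda_{\sigma_j,|\cdot|_{\mathfrak{p},\KK}}(y)$ is nonnegative. Because the $\sigma_1,\dots,\sigma_q$ are assumed linearly independent (so in particular $q \leq n+1$), the full index set $J = \{1,\dots,q\}$ is admissible in the maximum of Theorem \ref{theorem5.2}, and nonnegativity of the summands makes it the optimum:
$$\max_{J}\sum_{j\in J}\lambda_{\sigma_j,|\cdot|_{\mathfrak{p},\KK}}(y) \;=\; \sum_{j=1}^q \lambda_{\sigma_j,|\cdot|_{\mathfrak{p},\KK}}(y)$$
for every $\mathfrak{p} \in S$ and every admissible $y$.

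Third, I would conclude: if $y \in \PP^n(\KK)$ satisfies (1)--(3), then by Step 1 we have $h_{\Osh_{\PP^n_{\KK}}(1)}(y) > a_\epsilon$ and, combining Steps 1 and 2,
$$\sum_{\mathfrak{p}\in S} \max_J \sum_{j\in J} \lambda_{\sigma_j,|\cdot|_{\mathfrak{p},\KK}}(y) \;>\; (n+1+\epsilon)\, h_{\Osh_{\PP^n_{\KK}}(1)}(y) + b_\epsilon.$$
Both alternatives of Theorem \ref{theorem5.2} are therefore violated, which forces $y \in W(\KK) = Z(\KK)$. The only non-bookkeeping step is the collapse of the max to the full sum, and even that reduces immediately to linear independence of the $\sigma_j$'s together with the non-archimedean bound $||\sigma_j(y)||_{\mathfrak{p},\KK} \leq 1$; the rest is a straightforward translation between heights and metrics.
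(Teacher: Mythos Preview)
Your proof is correct and follows the same route as the paper's own proof, which simply says to ``use the relation $h_{\Osh_{\PP^n_{\KK}}(1)}(y) = - \log_{\mathbf{c}} H_{\Osh_{\PP^n_{\KK}}(1)}(y)$ to write the conclusion of Theorem \ref{theorem5.2} in multiplicative form.'' You supply the details the paper leaves implicit, in particular the reduction of the inner maximum in Theorem \ref{theorem5.2} to the full sum $\sum_{j=1}^q$, using linear independence of the $\sigma_j$ together with the bound $||\sigma_j(y)||_{\mathfrak{p},\KK}\leq 1$ from the ultrametric inequality applied to \eqref{eqn5.2}.
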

\begin{proof}
Use the relation $h_{\Osh_{\PP^n_{\KK}}(1)}(y) = - \log_{\mathbf{c}} H_{\Osh_{\PP^n_{\KK}}(1)}(y)$ to write the conclusion of Theorem \ref{theorem5.2} in multiplicative form.
\end{proof}

Next, we give an extension of Corollary \ref{corollary5.3}.  Indeed, using Corollary \ref{corollary5.3}, we obtain a function field analogue of the Faltings-W\"{u}stholz theorem, \cite[Theorem 9.1]{Faltings:Wustholz}.  This result, which we state as Corollary \ref{corollary5.5} below, should also be compared with the discussion given in \cite[bottom of p.~1301]{Evertse:Ferretti:2002}.  We also note that in formulating this result, for the sake of simplicity, we restrict our attention to the case of a single prime divisor.  Finally, we remark that Corollary \ref{corollary5.5} below plays a key role in our approach to proving Roth type theorems as we will see in Proposition \ref{proposition6.2}.

In order to state Corollary \ref{corollary5.5}, fix a non-degenerate projective variety $X \subseteq \PP^n_\KK$, let $L = \Osh_{\PP^n_\KK}(1)|_X$, fix a finite extension $\FF$ of $\KK$, $\FF \subseteq \overline{\KK}$, and let $L_\FF$ denote the pullback of $L$ to $X_\FF = X \times_{\spec \KK} \spec \FF$ via the base change $\KK \rightarrow \FF$.  Let $H_L(\cdot) : X(\KK) \rightarrow \RR$ denote the height function determined by $L$, fix a prime divisor $\mathfrak{p}$ of $Y$ and let $||\cdot||_{\mathfrak{p}, \KK}$ be the $\mathfrak{p}$-adic metric on $L$ obtained by pulling back the metric given in \eqref{eqn5.2}.  Finally, fix an extension of the absolute value $|\cdot|_{\mathfrak{p}, \KK}$ to $\overline{\KK}$.  Then, in this way, we obtain an extension of the metric $||\cdot||_{\mathfrak{p},\KK}$ to a $\mathfrak{p}$-adic metric on $L_\FF$.

We can now state:
 
\begin{corollary}\label{corollary5.5}
In the setting just described, in particular, $X \subseteq \PP^n_\KK$ is a non-degenerate projective variety, $L = \Osh_{\PP^n_{\KK}}(1)|_X$ and $s_0,\dots,s_n \in \H^0(X,L)$ are the pull-back of the coordinate functions $x_0,\dots,x_n$, let $\sigma_1,\dots,\sigma_q \in \H^0(X_{\mathbf{F}},L_{\mathbf{F}})$ be a collection of $\mathbf{F}$-linearly independent combinations of the $s_0,\dots,s_n$.  Fix real numbers $c_1,\dots,c_q \geq 0$ with the property that $c_1+\dots+c_q > n+1$.  If $\epsilon = c_1+\dots+c_q - n - 1$, then there exists a proper subvariety $Z \subsetneq X$ and positive constants $A_\epsilon$ and $B_\epsilon$ such that the following is true:
if $y \in X(\KK)$ satisfies the conditions:
\begin{enumerate}
\item{$H_L(y) > A_\epsilon$; and}
\item{$||\sigma_i(y)||_{\mathfrak{p},\KK} < B_\epsilon H_L(y)^{-c_i} $, for $i = 1,\dots, q$; and }
\item{$y \not \in \operatorname{Supp}(\sigma_i)$, for $i=1,\dots, q$,}
\end{enumerate}
then $y \in Z(\KK)$.
\end{corollary}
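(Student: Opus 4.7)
The plan is to reduce this to Corollary \ref{corollary5.3} by taking the product of the individual approximation bounds and then restricting to $X$.

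First I would rewrite the approximation hypothesis multiplicatively. Taking the product of condition (2) over $i=1,\dots,q$ yields
\[
\prod_{i=1}^{q} ||\sigma_i(y)||_{\mathfrak{p},\KK} \;<\; B_\epsilon^{\,q}\, H_L(y)^{-(c_1+\cdots+c_q)} \;=\; B_\epsilon^{\,q}\, H_L(y)^{-(n+1+\epsilon)},
\]
which exactly matches the hypothesis of Corollary \ref{corollary5.3} applied to the single-element set of prime divisors $S=\{\mathfrak{p}\}$.

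Next, since $s_0,\dots,s_n\in\H^0(X,L)$ are the pullbacks of the coordinate functions $x_0,\dots,x_n$ under the embedding $X\hookrightarrow \PP^n_\KK$, each $\sigma_j = \sum_{i=0}^{n} a_{ij}\, s_i$ (with $a_{ij}\in\FF$) is the pullback to $X_\FF$ of a linear form $\tilde{\sigma}_j = \sum_{i=0}^{n} a_{ij}\, x_i \in \FF[x_0,\dots,x_n]$, and $\FF$-linear independence of the $\sigma_j$ translates to $\FF$-linear independence of the $\tilde\sigma_j$. Via this embedding we also have, by definition, $H_L(y) = H_{\Osh_{\PP^n_\KK}(1)}(y)$ and $||\sigma_j(y)||_{\mathfrak{p},\KK} = ||\tilde\sigma_j(y)||_{\mathfrak{p},\KK}$ for $y \in X(\KK)$, so the multiplicative bound above transfers verbatim to the $\tilde\sigma_j$ viewed as functions on $\PP^n(\KK)$.

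I would then apply Corollary \ref{corollary5.3} with $S=\{\mathfrak{p}\}$ and the linearly independent linear forms $\tilde\sigma_1,\dots,\tilde\sigma_q$, for the fixed $\epsilon = c_1+\cdots+c_q - n - 1 > 0$. This produces a proper subvariety $Z' \subsetneq \PP^n_\KK$ (indeed a finite union of proper linear subspaces, as Theorem \ref{theorem5.2} guarantees) together with constants $A'_\epsilon,\, B'_\epsilon>0$ so that any $y\in\PP^n(\KK)$ satisfying the three corresponding conditions lies in $Z'(\KK)$. Setting $A_\epsilon = A'_\epsilon$ and choosing $B_\epsilon>0$ so that $B_\epsilon^{\,q} \leq B'_\epsilon$ forces such a $y$ into $Z'(\KK)$.

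Finally I would set $Z := Z' \cap X$. Because $X$ is non-degenerate in $\PP^n_\KK$, it is not contained in any proper linear subspace; hence each component of $Z'$ meets $X$ in a proper subvariety, and $Z \subsetneq X$ is the desired proper subvariety containing $y$. The only point requiring any care is bookkeeping: ensuring that the three hypotheses of Corollary \ref{corollary5.3} are met (the height hypothesis and the three-fold condition including $y \notin \supp(\tilde\sigma_i)$, which is immediate from condition (3)) and that the resulting $Z$ genuinely sits inside $X$ as a proper subvariety, which uses non-degeneracy essentially. There is no deep obstacle here: the work has already been done in Theorem \ref{theorem5.2} and its corollary, and this statement is a convenient packaging tailored for use in Proposition \ref{proposition6.2}.
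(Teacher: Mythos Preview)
Your proposal is correct and follows essentially the same approach as the paper: multiply the individual bounds in (2) to obtain the product inequality $\prod_i ||\sigma_i(y)||_{\mathfrak{p},\KK} < B_\epsilon^{\,q} H_L(y)^{-(n+1+\epsilon)}$ and invoke Corollary~\ref{corollary5.3} with $S=\{\mathfrak{p}\}$. The paper's proof is slightly terser in that it absorbs the passage from $Z'\subsetneq\PP^n_\KK$ to $Z=Z'\cap X\subsetneq X$ (and the implicit use of non-degeneracy) into the phrase ``using the definitions of $H_L(\cdot)$ and $||\cdot||_{\mathfrak{p},\KK}$,'' whereas you spell this out explicitly; otherwise the arguments are the same.
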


\begin{proof}
Applying Corollary \ref{corollary5.3} with $S = \{ \mathfrak{p} \}$ and using the definitions of $H_L(\cdot)$ and $||\cdot||_{\mathfrak{p},\KK}$, we conclude that there exists a proper subvariety $Z \subsetneq X$ and positive constants $A_\epsilon$ and $B_\epsilon$ such that if $y \in X(\KK)$ satisfies the conditions:
\begin{itemize}
\item[(a)]{ $H_L(y) > A_\epsilon$; and }
\item[(b)]{ $\prod_{j=1}^q ||\sigma_j(y) ||_{\mathfrak{p},\KK} < B_\epsilon H_L(y)^{-n-1-\epsilon}$; and}
\item[(c)]{ $y \not \in \operatorname{Supp}(\sigma_i)$, for $i = 1,\dots, q$, } 
\end{itemize}
then $y \in Z(\KK)$.

Now suppose that $y \in X(\KK)$ satisfies the conditions that:
\begin{itemize}
\item[(a')]{$H_L(y) > A_\epsilon$; and}
\item[(b')]{ $ || \sigma_i(y)||_{\mathfrak{p}, \KK} < B_\epsilon^{\frac{1}{q}}H_L(y)^{-c_i}$, for $i=1,\dots,q$; and }
\item[(c')]{ $y \not \in \operatorname{Supp}(\sigma_i)$, for $i = 1,\dots,q$.}
\end{itemize}
We then conclude that $y$ must be contained in $Z$, since if (b') holds true, then it is also true that:
$$ \prod_{i=1}^q || \sigma_i(y)||_{\mathfrak{p}, \KK} < B_\epsilon H_L(y)^{- \sum_{i=1}^q c_i} = B_\epsilon H_L(y)^{-n-1-\epsilon}. $$
\end{proof}

\section{Computing approximation constants for varieties over function fields}\label{6}
  
Let $\overline{\kk}$ be an algebraically closed field of characteristic zero, $Y$ an irreducible projective variety over $\overline{\kk}$ which is non-singular in codimension $1$ and $\mathcal{L}$ an ample line bundle on $Y$.  Let $\KK$ be the field of fractions of $Y$ and $X \subseteq \PP^n_\KK$ a geometrically irreducible projective variety.

In this section we give sufficient conditions for approximation constants $\alpha_x(L)$, for $x \in X(\overline{\KK})$ and $L = \Osh_{\PP^n_\KK}(1)|_X$, to be computed on a proper  $\KK$-subvariety of $X$.  Our conditions are related to existence of \emph{vanishing sequences} which are \emph{Diophantine constraints}.  We define these concepts in \S \ref{6.2}.  In \S \ref{7}, especially Theorem \ref{theorem7.1}, we show how the relative asymptotic volume constants of McKinnon-Roth, \cite{McKinnon-Roth}, can be used to give sufficient conditions for existence of such vanishing sequences which are Diophantine constraints.

Throughout this section we fix a prime divisor $\mathfrak{p} \subseteq Y$.  We also fix an extension of $|\cdot|_{\mathfrak{p},\KK}$, the absolute value of $\mathfrak{p}$ with respect to $\mathcal{L}$, which we defined in \eqref{eqn2.5}, to $\overline{\KK}$ a fixed algebraic closure of $\KK$.

\np{}\label{6.1}
Since $X \subseteq \PP^n_{\KK}$, we obtain a projective distance function 
\begin{equation}\label{eqn6.1} d_{\mathfrak{p}}(\cdot,\cdot) = d_{|\cdot|_{\mathfrak{p}}}(\cdot,\cdot) : X(\overline{\KK}) \times X(\overline{\KK}) \rightarrow [0,1] \end{equation}
by pulling back the function \eqref{eqn3.5}.  The function \eqref{eqn6.1} is the projective distance function of $X$ with respect to $L = \Osh_{\PP^n_{\KK}}(1)|_{ X}$, the prime divisor $\mathfrak{p} \subseteq Y$, and the sections $s_0,\dots, s_n \in \H^0(X,L)$ obtained by pulling back the coordinate functions $x_0,\dots,x_n \in \H^0(\PP^n_{\KK},\Osh_{\PP^n_{\KK}}(1))$.
If $x \in X(\overline{\KK})$ and $\mathbf{F}$ its field of definition, then let $X_{\mathbf{F}} = X \times_{\spec \KK} \spec \mathbf{F}$ and let $L_{\mathbf{F}}$ denote the pull-back of $L$ to $X_{\mathbf{F}}$ via the base change $\spec \mathbf{F} \rightarrow \spec \mathbf{K}$.

The following lemma is used in the proof of Proposition \ref{proposition6.2}.  Its main purpose is to show how, under suitable hypothesis, the metric $||\cdot||_{\mathfrak{p},\KK}$ behaves with respect to the distance function $d_\mathfrak{p}(\cdot,\cdot)$.

\begin{lemma}\label{lemma6.1}  In the above setting, fix $x \in X(\overline{\KK})$, let $\mathbf{F}$ denote the field of definition of $x$ and suppose that a nonzero global section 
$\sigma = \sum_{j=0}^n a_j s_j \text{, with } a_j \in \mathbf{F}\text{,}$  of $L_{\mathbf{F}}$ vanishes to order at least $m$ at $x$.  In particular, locally $\sigma \in \mathfrak{m}^m_x \Osh_{X_{\mathbf{F}},x}$ the $m$th power of the maximal ideal of the local ring of $x$.
Let $\{y_i\} \subseteq X(\KK)$ be an infinite sequence of distinct points with the property that  $d_{\mathfrak{p}}(x,y_i) \to 0$ as $i \to \infty$.  Then for all $\delta > 0$ and all $i \gg 0$, depending on $\delta$, 
$$||\sigma(y_i)||_{\mathfrak{p}, \KK} \leq d_{\mathfrak{p}}(x,y_i)^{m-\delta}. $$
\end{lemma}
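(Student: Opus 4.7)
The plan is to work locally around $x$, using Proposition \ref{proposition4.5} to convert the bound on $||\sigma(y_i)||_{\mathfrak{p},\KK}$ into a bound on a local expression for $\sigma$ in terms of generators of the maximal ideal $\mathfrak{m}_x$ of $x$.

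First I would invoke Proposition \ref{proposition4.5} to produce an affine open subset $U \subseteq X_{\mathbf{F}}$ containing $x$, elements $u_1,\dots,u_r \in \Gamma(U,\Osh_{X_\mathbf{F}})$ generating the maximal ideal of $x$ in the coordinate ring of $U$, and positive constants $c \leq C$ satisfying
$$c\,d_\mathfrak{p}(x,y_i) \leq \max_{1 \leq k \leq r} |u_k(y_i)|_{\mathfrak{p},\KK} \leq C\,d_\mathfrak{p}(x,y_i)$$
for all $i$ large enough; since $d_\mathfrak{p}(x,y_i)\to 0$, the $y_i$ eventually lie in $U$, so the comparison is legitimate.

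Next I would choose a coordinate $s_{k_0}$ with $s_{k_0}(x) \neq 0$, so that, after shrinking $U$, $s_{k_0}$ serves as a local frame for $L_\mathbf{F}$ on $U$, and write $\sigma = f \cdot s_{k_0}$ for a unique regular function $f \in \Gamma(U,\Osh_{X_\mathbf{F}})$. The vanishing hypothesis translates to $f \in \mathfrak{m}_x^m \Osh_{X_\mathbf{F},x}$. After inverting an element of $\Gamma(U,\Osh_{X_\mathbf{F}})$ not vanishing at $x$, and shrinking $U$ accordingly, I can express $f$ as a finite sum
$$f = \sum_{|\alpha|=m} g_\alpha u^\alpha, \qquad g_\alpha \in \Gamma(U,\Osh_{X_\mathbf{F}}).$$
Using that $|\cdot|_{\mathfrak{p},\KK}$ is non-archimedean and that each $|g_\alpha|_{\mathfrak{p},\KK}$ is bounded on a bounded neighbourhood of $x$ (per \S \ref{4.2}), I would then estimate, for all $i \gg 0$,
$$|f(y_i)|_{\mathfrak{p},\KK} \leq \max_\alpha |g_\alpha(y_i)|_{\mathfrak{p},\KK} \left(\max_k |u_k(y_i)|_{\mathfrak{p},\KK}\right)^m \leq M\,d_\mathfrak{p}(x,y_i)^m$$
for some constant $M>0$ independent of $i$. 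Combining this with the factorization $||\sigma(y_i)||_{\mathfrak{p},\KK} = |f(y_i)|_{\mathfrak{p},\KK}\cdot||s_{k_0}(y_i)||_{\mathfrak{p},\KK}$ and the bound $||s_{k_0}(y_i)||_{\mathfrak{p},\KK} \leq 1$, which is immediate from \eqref{eqn5.2}, yields $||\sigma(y_i)||_{\mathfrak{p},\KK} \leq M\,d_\mathfrak{p}(x,y_i)^m$. The $\delta$-loss is then free: for any fixed $\delta>0$, since $d_\mathfrak{p}(x,y_i)\to 0$, eventually $M\leq d_\mathfrak{p}(x,y_i)^{-\delta}$, giving the stated inequality.

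The main obstacle I expect to face is arranging the finite global expression $f = \sum g_\alpha u^\alpha$ on a suitable neighbourhood of $x$: the containment $f \in \mathfrak{m}_x^m \Osh_{X_\mathbf{F},x}$ a priori holds only after localization at $x$, so the shrinking of $U$ and the precise choice of denominator to clear must be tracked carefully to ensure that the resulting $g_\alpha$ really are regular on $U$ (and hence bounded on bounded neighbourhoods of $x$). Once this local expansion is in hand, the remaining steps are routine non-archimedean estimates combined with Proposition \ref{proposition4.5}.
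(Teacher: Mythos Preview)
Your proposal is correct and follows essentially the same route as the paper: both arguments localize, use that a trivialized form of $\sigma$ lies in $\mathfrak{m}_x^m$, invoke Proposition~\ref{proposition4.5} to compare with $d_\mathfrak{p}(x,y_i)^m$ up to a constant, and then absorb the constant via $d_\mathfrak{p}(x,y_i)^{-\delta}$. The paper compresses your monomial expansion step into the phrase ``this fact together with Proposition~\ref{proposition4.5} implies\dots'', and rather than choosing a single frame $s_{k_0}$ it uses the $\min$-formula $||\sigma(z)||_{\mathfrak{p},\KK} = \min_{j,k} |\sigma/(a_j s_k)(z)|_{\mathfrak{p},\KK}$ directly, noting that at least one $\sigma/(a_j s_k)$ lies in $\mathfrak{m}_x^m\Osh_{X_\mathbf{F},x}$; this sidesteps the small bookkeeping issue in your factorization $||\sigma(y_i)||_{\mathfrak{p},\KK} = |f(y_i)|_{\mathfrak{p},\KK}\cdot||s_{k_0}(y_i)||_{\mathfrak{p},\KK}$, which as written omits a harmless constant factor $1/\max_j|a_j|_{\mathfrak{p},\KK}$ coming from the normalization in \eqref{eqn5.2}.
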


\begin{proof}
If $z \in X(\KK)$ has homogeneous coordinates $z=[z_0:\dots : z_n]$ then locally we know:
$$||\sigma(z)||_{\mathfrak{p}, \KK} = \min_{j,k} \left(\left|  \frac{\sigma}{a_j s_k}(z)\right|_{\mathfrak{p}, \KK} \right) $$
and locally by assumption at least one
$$\frac{\sigma}{a_j s_k} \in \mathfrak{m}^m_{x} \Osh_{X_{\mathbf{F}},x}. $$
This fact together with Proposition \ref{proposition4.5} implies that for all $i \gg 0$
$$||\sigma(y_i)||_{\mathfrak{p}, \KK} \leq \mathrm{C} d_{\mathfrak{p}}(x,y_i)^m $$ for some constant $\mathrm{C}$ independent of $i$.  We also have that 
$$d_{\mathfrak{p}}(x,y_i) \to 0 $$ as $i \to 0$.  Thus for $i \gg 0$, $d_{\mathfrak{p}}(x,y_i)$ is very small and so for all $\delta > 0$, $d_{\mathfrak{p}}(x,y_i)^{-\delta}$ will exceed $\mathrm{C}$ for all $i \gg 0$.
In particular,
$$||\sigma(y_i)||_{\mathfrak{p}, \KK} \leq \mathrm{C} d_{\mathfrak{p}}(x,y_i)^m \leq d_{\mathfrak{p}}(x,y_i)^{m-\delta} $$ for all $i \gg 0$.
\end{proof}

\np{}\label{6.2}{\bf Vanishing sequences, Diophantine constraints and computing approximation constants.}  We now introduce the concept of \emph{vanishing sequences} which are \emph{Diophantine constraints}, see \S \ref{6.2.2} and \S \ref{6.2.3} respectively.  The main motivation for these notions is that they, in conjunction with  the subspace theorem, especially Corollary \ref{corollary5.5}, allow for sufficient conditions for approximation constants to be computed on a proper subvariety, see  Proposition \ref{proposition6.2} and Theorem \ref{theorem6.3}.  We should also emphasize that these results proven here are in some sense unsatisfactory because in order for them to be of use we are faced with the issue of constructing vanishing sequences which are Diophantine constraints.  As we will see in \S \ref{7} one approach to resolving this issue is related to local positivity and especially the asymptotic volume constant in the sense of McKinnon-Roth \cite{McKinnon-Roth}. 

\snp{}\label{6.2.1}
In what follows, we fix $X \subseteq \PP^n_\KK$ a geometrically irreducible projective variety and we let $L = \Osh_{\PP^n_\KK}(1)|_X$.  We also fix $x \in X(\overline{\KK})$, we let $\mathbf{F} \subseteq \overline{\KK}$ be the field of definition of $x$, and we let $X_\FF = X \times_{\spec \KK} \spec \mathbf{F}$ denote the base change of $X$ with respect to the field extension $\mathbf{F} / \KK$.  Finally, we let $L_\FF$ denote the pullback of $L$ to $X_\FF$ via the base change map $\KK \rightarrow \FF$.

Fix an integer $m \in \ZZ_{>0}$ and let $s_0,\dots, s_N \in \H^0(X,L^{\otimes m})$ denote a basis of the $\KK$-vector space $\H^0(X,L^{\otimes m})$.  Let $\sigma_1,\dots,\sigma_q \in \H^0(X_{\FF}, L^{\otimes m}_{\FF})$ denote a collection of $\FF$-linearly independent $\FF$-linear combinations of the $s_0,\dots, s_N$.  Fix rational numbers $\gamma_1,\dots,\gamma_q \in \QQ_{>0}$ with the property that $m\gamma_j \in \ZZ$ for all $j$.

Having fixed our setting, we are now able to make two definitions.

\subsubsection{Definition}\label{6.2.2}  We say that data $(m,\gamma_\bullet,\sigma_\bullet)$ as in \S \ref{6.2.1} is a \emph{vanishing sequence for $L$ at $x$ with respect to $m$ and the rational numbers $\gamma_1,\dots,\gamma_q \in \QQ_{>0}$, and defined over $\FF$} if locally the pullback of each $\sigma_i$ is an element of $\mathfrak{m}_x^{m \gamma_i} \Osh_{X_{\mathbf{F}},x}$.

\subsubsection{Definition}\label{6.2.3}  Fix a real number $R>0$ and fix a vanishing sequence $(m,\gamma_\bullet,\sigma_\bullet)$ for $L$ at $x$ with respect to $m$ and the rational numbers $\gamma_1,\dots,\gamma_q \in \QQ_{>0}$.  We say that the  vanishing sequence $(m,\gamma_\bullet,\sigma_\bullet)$ is a \emph{Diophantine constraint for $L$ with respect to $R$ and $m$ at $x$ and defined over $\FF$} if 
there exists a proper subvariety $Z \subsetneq X$ and positive constants $A$ and $B$ so that if $y \in X(\KK)$ satisfies the conditions that
\begin{enumerate}
\item[(a)]{$H_{L^{\otimes m}}(y) > A$; and}
\item[(b)]{$||\sigma_i(y)||_{\mathfrak{p},\KK} < B H_{L^{\otimes m}}(y)^{-\gamma_iR}$, for $i = 1,\dots,q$; and}
\item[(c)]{$y \not \in \mathrm{Supp}(\sigma_i)$, for $i = 1,\dots,q$, }
\end{enumerate}
then $y \in Z(\KK)$.

\subsubsection{Example}  Suppose given a vanishing sequence $(m,\gamma_\bullet,\sigma_\bullet)$ for $L$ at $x$ with respect to $m$ and rational numbers $\gamma_1,\dots, \gamma_q \in \QQ_{>0}$ and defined over $\FF$.  Fix a real number $R > 0$ and suppose that 
$$\gamma_1 + \dots + \gamma_q > \frac{h^0(X,L^{\otimes m})}{R}.$$
Then, as implied by Corollary \ref{corollary5.5}, the data $(m,\gamma_\bullet,\sigma_\bullet)$ is a Diophantine constraint for $L$ with respect to $R$ and $m$ at $x$ and defined over $\FF$.

\snp{}\label{6.2.5}
As mentioned above, vanishing sequences and Diophantine constraints are related to approximation constants:

\begin{proposition}\label{proposition6.2} 
Let $x \in X(\overline{\KK})$ have field of definition $\FF$.  Let $R>0$ be a real number, $m \in \ZZ_{>0}$ a positive integer and suppose that there exists a vanishing sequence $(m,\gamma_\bullet,\sigma_\bullet)$ for $L$ at $x$, with respect to  $m$ and the rational numbers $\gamma_1,\dots,\gamma_q \in \QQ_{>0}$ and defined over $\FF$, which is also a Diophantine constraint with respect to $R$.   Then there exists a proper Zariski closed subset $W \subsetneq X$ defined over $\KK$, containing $x$ as a $\overline{\KK}$-point, and with the property that 
$$\alpha_{x,X}(\{y_i\},L) \geq \frac{1}{R} $$
for all infinite sequences $\{y_i\} \subseteq X(\KK) \backslash W(\KK)$ of distinct points with unbounded height.
\end{proposition}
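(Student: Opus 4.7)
The plan is to take $W$ to be the union of the proper $\KK$-subvariety $Z$ furnished by the Diophantine constraint and (the $\KK$-descent of) the supports of the $\sigma_j$, and then, for a hypothetical sequence in the complement of $W$ with approximation constant smaller than $1/R$, combine Lemma \ref{lemma6.1} with the Diophantine constraint hypothesis to derive a contradiction.

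Concretely, by Definition \ref{6.2.3} applied to the given data there exist a proper subvariety $Z \subsetneq X$ defined over $\KK$ and positive constants $A, B$ with the stated properties. Each $\operatorname{Supp}(\sigma_j) \subsetneq X_\FF$ is a proper closed subset, so its image in $X$ under the finite surjection $X_\FF \to X$ is a proper closed subset of $X$ defined over $\KK$. Writing $W_0$ for the union of these images, set $W := Z \cup W_0$. Since each $\sigma_j$ vanishes at $x$, we have $x \in W_0(\overline{\KK}) \subseteq W(\overline{\KK})$, so $W$ is a proper $\KK$-subvariety of $X$ containing $x$ as a $\overline{\KK}$-point.

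Now let $\{y_i\} \subseteq X(\KK) \setminus W(\KK)$ be an infinite sequence of distinct points with unbounded height; passing to a subsequence one may assume $H_L(y_i) \to \infty$. Suppose for contradiction that $\alpha_{x,X}(\{y_i\}, L) < 1/R$. Then there are a real $\gamma < 1/R$ and a constant $C > 0$ with $d_\mathfrak{p}(x, y_i)^\gamma H_L(y_i) \leq C$ for every $i$; in particular $d_\mathfrak{p}(x, y_i) \to 0$, and using $H_{L^{\otimes m}} = H_L^m$,
$$d_\mathfrak{p}(x, y_i) \leq C^{1/\gamma} H_{L^{\otimes m}}(y_i)^{-1/(m\gamma)}.$$
The argument of Lemma \ref{lemma6.1} rests only on the vanishing order of a section at $x$ via Proposition \ref{proposition4.5}, not on the specific line bundle hosting it; so, applied to each $\sigma_j \in \H^0(X_\FF, L_\FF^{\otimes m})$ (locally in $\mathfrak{m}_x^{m\gamma_j}$), it yields, for any $\delta > 0$ and all $i \gg 0$,
$$||\sigma_j(y_i)||_{\mathfrak{p}, \KK} \leq d_\mathfrak{p}(x, y_i)^{m\gamma_j - \delta} \leq C_1 H_{L^{\otimes m}}(y_i)^{-(m\gamma_j - \delta)/(m\gamma)}.$$
Because $R\gamma < 1$ and each $\gamma_j > 0$, one may choose $\delta > 0$ small enough that $(m\gamma_j - \delta)/(m\gamma) > \gamma_j R$ for every $j$; since $H_{L^{\otimes m}}(y_i) \to \infty$, the right-hand side is then eventually strictly less than $B\, H_{L^{\otimes m}}(y_i)^{-\gamma_j R}$. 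All three conditions of the Diophantine constraint therefore hold for $y_i$ once $i \gg 0$: condition (b) by the preceding inequality, condition (a) because heights tend to infinity, and condition (c) because $y_i \notin W \supseteq W_0$. Hence $y_i \in Z(\KK) \subseteq W(\KK)$ for $i \gg 0$, contradicting $y_i \notin W(\KK)$.

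The step I expect to be the main obstacle is the adaptation of Lemma \ref{lemma6.1} to sections of $L^{\otimes m}$ rather than $L$, which requires picking a local trivialization of $L^{\otimes m}$ near $x$ and then running the estimate of Proposition \ref{proposition4.5} with local generators of $\mathfrak{m}_x$; and the coordination of auxiliary constants, so that the strict inequality $(m\gamma_j - \delta)/(m\gamma) > \gamma_j R$ really does absorb the multiplicative factor $C_1$ into the exponential decay and delivers the bound with the fixed constant $B$ provided by Definition \ref{6.2.3}.
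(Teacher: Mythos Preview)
Your argument is correct and is essentially the same as the paper's: both enlarge the constraint locus $Z$ by the supports of the $\sigma_j$, assume a sequence in the complement has approximation constant below $1/R$, invoke Lemma~\ref{lemma6.1} (adapted to $L^{\otimes m}$) to bound $\|\sigma_j(y_i)\|_{\mathfrak{p},\KK}$ by a power of $d_\mathfrak{p}(x,y_i)$, and then convert this into the inequality $\|\sigma_j(y_i)\|_{\mathfrak{p},\KK} < B\,H_{L^{\otimes m}}(y_i)^{-\gamma_j R}$ demanded by the Diophantine constraint, contradicting $y_i\notin W$. The only cosmetic difference is in the algebra: the paper rescales to $\alpha_{x,X}(\{y_i\},L^{\otimes m})<m/R$ and manipulates $\|\sigma_j(y_i)\|^{1/(R\gamma_j)}$ against $d_\mathfrak{p}(x,y_i)^{m/R-\delta'}H_{L^{\otimes m}}(y_i)\to 0$, whereas you fix $\gamma<1/R$ and check directly that $(m\gamma_j-\delta)/(m\gamma)>\gamma_j R$; these are the same estimate unpacked in two orders.
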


\begin{proof}
Let $c_j = \gamma_j R$, for $j = 1,\dots, q$.  The fact that the vanishing sequence $(m,\gamma_\bullet,\sigma_\bullet)$ is a Diophantine constraint with respect to $R$ implies that there exists positive constants $A$, $B$ and a proper Zariski closed subset $W \subsetneq X$ defined over $\KK$ with the property that the collection of $y \in X(\KK)$ having the properties that 
\begin{enumerate}
\item[(a)]{$H_{L^{\otimes m}}(y) > A$; and }
\item[(b)]{$||\sigma_i(y)||_{\mathfrak{p}, \KK} < B H_{L^{\otimes m}}(y)^{-c_i} $, for $i=1,\dots,q$; and}
\item[(c)]{$y \not \in \bigcup_{i=1}^q \operatorname{Supp}(\sigma_i)$}
\end{enumerate}
 is contained in $W$.   The collection of such $y$ is also contained in $W$ adjoined with $\bigcup_{i=1}^q\operatorname{Supp}(\sigma_i)$ which, since $X$ is irreducible, is a proper Zariski closed subset of $X$.  Thus, by enlarging $W$ if necessary,  we can assume that $W$ contains $\bigcup_{i=1}^q\operatorname{Supp}(\sigma_i)(\KK)$ and $x$.
 
Suppose the proposition is false for this $W$.  Then there exists an infinite sequence $\{y_i\} \subseteq X(\KK) \backslash W(\KK)$ of distinct points with unbounded height such that
\begin{equation}\label{eqn6.2}
\alpha_{x,X}(\{y_i\},L) = \frac{1}{m}\alpha_{x,X}(\{y_i\},L^{\otimes m}) < \frac{1}{R}.
\end{equation}
Trivially \eqref{eqn6.2} implies that 
\begin{equation}\label{eqn6.2'}
\alpha_{x,X}(\{y_i\},L^{\otimes m}) < m/R
\end{equation}
and using \eqref{eqn6.2'}, in conjunction with the definition of $\alpha_{x,X}(\{y_i\},L^{\otimes m})$, it follows that 
\begin{equation}\label{eqn6.2''}
d_{\mathfrak{p}}(x,y_i)^{\frac{m}{R}-\delta'} H_{L^{\otimes m}}(y_i) \to 0, 
\end{equation}
as $i \to \infty$ for all $0 < \delta' \ll 1$.  Note also that the definition of $\alpha_{x,X}(\{y_i\},L^{\otimes m})$ implies that
\begin{equation}\label{eqn6.2'''}
d_{\mathfrak{p}}(x,y_i) \to 0, 
\end{equation}
as $i \to \infty$.

We now make the following deductions.
To begin with, using Lemma \ref{lemma6.1}, we deduce that for all $\delta > 0$ and all $j$:
\begin{equation}\label{eqn6.3}
||\sigma_{j}(y_i)||^{\frac{1}{R\gamma_j}}_{\mathfrak{p}, \KK} \leq d_{\mathfrak{p}}(x,y_i)^{\frac{m}{R} - \left(\frac{\delta}{R\gamma_j} \right)}
\end{equation}
 for all $i \gg 0$ depending on $\delta$.  (Here we use the fact that $\sigma_{j} \in \mathfrak{m}^{m \gamma_j} \Osh_{X_{\mathbf{F}},x}$ combined with the fact \eqref{eqn6.2'''} so that the hypothesis of Lemma \ref{lemma6.1} ia satisfied.)

Next choose $\delta$ so that each $\delta'_j = \frac{\delta}{R\gamma_j}$ is sufficiently small.  Then, using \eqref{eqn6.3} and \eqref{eqn6.2''} above, we deduce:
\begin{equation}\label{eqn6.4}
\frac{H_{L^{\otimes m}}(y_i) ||\sigma_{j}(y_i)||_{\mathfrak{p}, \KK}^{\frac{1}{R\gamma_j}}}{B^{\frac{1}{R\gamma_j}}}
  \leq
\frac{H_{L^{\otimes m}}(y_i)d_{\mathfrak{p}}(x,y_i)^{\frac{m}{R}-(\frac{\delta}{R\gamma_j})}}
{B^{\frac{1}{R\gamma_j}}} < 1\text{,}
\end{equation}
for all $j$ and all $i \gg 0$.

Equation \eqref{eqn6.4} has the consequence that:
$$||\sigma_{j}(y_i)||_{\mathfrak{p}, \KK} < B H_{L^{\otimes m}}(y_i)^{-R \gamma_j} $$
for all $j$ and all $i \gg 0$.  Since, by passing to a subsequence if necessary, $H_L(y_i) \to \infty$, as $i \to \infty$, it follows that $H_{L^{\otimes m}}(y_i) \to \infty$ too and so we must have that $y_i \in W$ for all $i \gg 0$.  This is a contradiction.
\end{proof} 

\snp{}\label{6.2.6}
Proposition \ref{proposition6.2} implies:

\begin{theorem}\label{theorem6.3} Let $X \subseteq \PP^n_\KK$ be a geometrically irreducible subvariety, put $L = \Osh_{\PP^n_\KK}(1)|_X$ and let $x \in X(\overline{\KK})$.
Fix a real number $R>0$ and a positive integer $m \in \ZZ_{>0}$.  If $\alpha_{x,X}(L) < \frac{1}{R}$ and if there exists a vanishing sequence $(m,\gamma_\bullet,\sigma_\bullet)$ for $L$ at $x$ with respect to $m$ and defined over $\mathbf{F} \subseteq \overline{\KK}$, the field of definition of $x$, which is also a Diophantine constraint with respect to $R$, then 
$$\alpha_{x,X}(L) = \alpha_{x,W}(L|_{ W}) $$
for some proper subvariety $W \subsetneq X$ having dimension at least $1$ containing $x$ as a $\overline{\KK}$-point.
\end{theorem}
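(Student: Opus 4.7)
The plan is to apply Proposition~\ref{proposition6.2} to produce the desired subvariety $W$, and then, by a subsequence bookkeeping argument, to show that $\alpha_{x,X}(L)$ is realized by sequences contained entirely in $W(\KK)$. Concretely, first I would invoke Proposition~\ref{proposition6.2} on the given vanishing sequence $(m,\gamma_\bullet,\sigma_\bullet)$, which by hypothesis is a Diophantine constraint with parameter $R$, to obtain a proper Zariski closed subvariety $W \subsetneq X$ defined over $\KK$ and containing $x$ as an $\overline{\KK}$-point, with the property that $\alpha_{x,X}(\{y_i\},L) \geq 1/R$ for every infinite sequence $\{y_i\} \subseteq X(\KK) \setminus W(\KK)$ of distinct points with unbounded height. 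Since $\alpha_{x,X}(L) < 1/R$, I would then fix $\epsilon > 0$ with $\alpha_{x,X}(L) + \epsilon < 1/R$ and choose an infinite sequence $\{y_i\} \subseteq X(\KK)$ of distinct points, with unbounded height and converging to $x$, such that $\alpha_{x,X}(\{y_i\},L) < \alpha_{x,X}(L) + \epsilon$; using Remarks~(b) and~(c) of \S~\ref{3.8'}, I would pass to a subsequence with $H_L(y_i) \to \infty$, which only lowers the approximation constant.

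The key step is to argue that all but finitely many of the $y_i$ lie in $W(\KK)$. Otherwise, the subsequence of terms outside $W$ would itself have unbounded height (since $H_L(y_i) \to \infty$ along the parent sequence), so Proposition~\ref{proposition6.2} would force its approximation constant to be at least $1/R$, contradicting Remark~(c) of \S~\ref{3.8'}, which bounds it above by $\alpha_{x,X}(\{y_i\},L) < 1/R$. Once a tail $\{y_i'\}$ is known to lie inside $W(\KK)$, the equality of the height and projective distance functions on $W$ with their restrictions from $X$ gives $\alpha_{x,W}(\{y_i'\},L|_W) = \alpha_{x,X}(\{y_i'\},L) \leq \alpha_{x,X}(\{y_i\},L) < \alpha_{x,X}(L) + \epsilon$. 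Letting $\epsilon \to 0$ yields $\alpha_{x,W}(L|_W) \leq \alpha_{x,X}(L)$, and the reverse inequality is immediate from the definitions, since any infinite sequence of distinct $\KK$-points in $W(\KK)$ is also one in $X(\KK)$. Finally, the existence of infinitely many distinct $\KK$-rational points inside $W$ forces $\dim W \geq 1$.

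The substantive work is already packaged in Proposition~\ref{proposition6.2}, which in turn rests on Corollary~\ref{corollary5.5} and hence on Wang's effective subspace theorem. In the present deduction the main subtlety is the subsequence bookkeeping: I must ensure that the approximating sequence I extract inside $W$ still has unbounded height, and this is precisely why I would first pass to a subsequence with $H_L(y_i) \to \infty$ before slicing off the terms outside $W$.
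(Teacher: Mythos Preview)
Your proposal is correct and follows essentially the same approach as the paper: invoke Proposition~\ref{proposition6.2} to obtain $W$, observe that any approximating sequence with constant below $1/R$ must have almost all terms in $W(\KK)$, and conclude $\alpha_{x,X}(L)=\alpha_{x,W}(L|_W)$ with $\dim W\geq 1$. Your version spells out the subsequence and $\epsilon$-bookkeeping more carefully than the paper (which simply asserts that ``the definitions imply immediately'' the equality), but the strategy is the same.
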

\begin{proof}
By assumption $\alpha_{x,X}(L) < \frac{1}{R}$ and there exists a vanishing sequence $(m,\gamma_\bullet,\sigma_\bullet)$ for $L$ at $x$ with respect to $m$ which is also a Diophantine constraint with respect to $R$.  By Proposition \ref{proposition6.2}, there exists a proper subvariety $W \subsetneq X$ defined over $\KK$ and containing $x$ as a $\overline{\KK}$-point so that 
$$ \alpha_{x,X}(\{y_i\},L) \geq \frac{1}{R}$$
for all infinite sequences $\{y_i\} \subseteq X(\KK) \backslash W(\KK)$ of distinct points with unbounded height.  As a consequence if $\alpha_{x,X}(\{y_i\},L) < \frac{1}{R}$ for $\{y_i\} \subseteq X(\KK)$ an infinite sequence of distinct points with unbounded height, then almost all of the $y_i$ must lie in $W(\KK)$.   In particular $W(\KK)$ must have an infinite number of $\KK$-rational points and so $W$ has dimension at least $1$.  Since $x \in W(\overline{\KK})$ the definitions imply immediately that $\alpha_{x,X}(L) = \alpha_{x,W}(L|_{ W})$.
\end{proof}

\section{Asymptotic volume functions and vanishing  sequences}\label{7}

Let $\overline{\kk}$ be an algebraically closed field of characteristic zero, $Y\subseteq \PP^r_{\overline{\kk}}$ an irreducible projective variety, non-singular in codimension $1$, $\KK$ the field of fractions of $Y$, $X \subseteq \PP^n_\KK$ a geometrically irreducible projective variety and $L = \Osh_{\PP^n_\KK}(1)|_X$.  In this section, we relate the theory we developed in \S \ref{6} to local measures of positivity for $L$ near $x \in X(\overline{\KK})$.  Our main result is Theorem \ref{theorem7.1}, which we prove in \S \ref{7.2}, and which shows how the number $\beta_x(L)$ introduced by McKinnon-Roth can be used to construct a vanishing sequence for $L$ about $x$ which is also a Diophantine constraint.  In \S \ref{7.3} we include a short discussion about $\epsilon_x(L)$, the Seshadri constant of $L$ about $x$, and indicate how it is related to the number $\beta_x(L)$.  The reason for including this discussion is that the inequality given in \eqref{eqn7.2} below, established by McKinnon-Roth in \cite[Corollary 4.4]{McKinnon-Roth}, is needed in \S \ref{proof:main:results} where we prove the results we stated in \S \ref{motivation}.

\np{}\label{7.1}{\bf Expectations and volume functions.}  Let $x \in X(\overline{\KK})$, $\FF$ the field of definition of $x$, 
$\pi : \widetilde{X} = \operatorname{Bl}_x(X) \rightarrow X_\FF$ 
the blow-up of $X_\FF$, the base change of $X$ with respect to the field extension $\KK \rightarrow \FF$, at the closed point of $X_{\FF}$ corresponding to $x$, and let $E$ denote the exceptional divisor of $\pi$.  In what follows we let $\N^1(\widetilde{X})_\RR$ denote the real Neron-Severi space of $\RR$-Cartier divisors on $\widetilde{X}$ modulo numerical equivalence and we let $\Vol(\cdot)$ denote the volume function
$$\Vol(\cdot) : \N^1(\widetilde{X})_{\RR} \rightarrow \RR. $$
In particular if $g$ equals the dimension of $\widetilde{X}$, and if $\ell$ denotes the numerical class of an integral Cartier divisor $D$ on $\widetilde{X}$, then
$$\Vol(\ell) = \limsup_{m\to \infty} \frac{h^0(\widetilde{X},\Osh_{\widetilde{X}}(mD))}{m^g / g!},$$
see \cite[\S 2.2.C, p.~148]{Laz}.

If $\gamma \in \RR_{\geq 0}$, then let $L_\gamma$ denote the $\RR$-line bundle 
$$L_\gamma = \pi^* L_{\FF} - \gamma E$$ on $\widetilde{X}$; here $L_\FF$ denotes the pullback of $L$ to $X_\FF$.  In what follows we also let $L_{\gamma,\overline{\KK}}$ denote the pullback of $L_\gamma$ to $\widetilde{X}_{\overline{\KK}}$ the base change of $\widetilde{X}$ with respect to $\KK \rightarrow \overline{\KK}$.   In addition let $\gamma_{\eff,x}(L)$ be defined by
$$\gamma_{\eff} = \gamma_{\eff,x}(L) = \sup \{\gamma \in \RR_{\geq 0} : L_{\gamma, \overline{\KK}} \text{ is numerically equivalent to an effective divisor} \}. $$
As explained in \cite[\S 4]{McKinnon-Roth}, we have:
\begin{enumerate}
\item[(a)]{$\gamma_{{\eff}} < \infty$;}
\item[(b)]{$\Vol(L_\gamma) > 0$ for all $\gamma \in [0,\gamma_{{\eff}})$;}
\item[(c)]{$\Vol(L_\gamma) = 0$ for all $\gamma > \gamma_{\eff}$; and}
\item[(d)]{$\Vol(L_{\gamma_\eff}) = 0$.}
\end{enumerate}
In \cite[\S 4]{McKinnon-Roth} the constant $\beta_x(L)$ is defined by:
$$\beta_x(L) = \int^\infty_0 \frac{\Vol(L_\gamma)}{\Vol(L)} d \gamma = \int^{\gamma_{\eff}}_0  \frac{\Vol(L_\gamma)}{\Vol(L)} d \gamma \text{,}$$
see \cite[p.~545, Definition 4.3, and Remark p.~548]{McKinnon-Roth}.

\np{}\label{7.2}{\bf Volume functions and existence of vanishing sequences.}  We wish to show how the number $\beta_x(L)$ is related to vanishing sequences and Diophantine constraints.  Indeed, we use techniques, similar to those employed in the proof of \cite[Theorem 5.1]{McKinnon-Roth}, to prove:

\begin{theorem}\label{theorem7.1}
Let $X \subseteq \PP^n_\KK$ be a geometrically irreducible subvariety and let $L = \Osh_{\PP^n_{\KK}}(1)|_X$.  Fix a real number $R > 0$ and a $\overline{\KK}$-rational point $x \in X(\overline{\KK})$.   Let $\mathbf{F}$ denote the field of definition of $x$.  If $\beta_x(L) > \frac{1}{R}$, then there exists a positive integer $m \in \ZZ_{>0}$ and a vanishing sequence $(m,\gamma_\bullet, \sigma_\bullet)$ for $L$ at $x$ with respect to  $m$ and defined over $\mathbf{F} \subseteq \overline{\KK}$ which is also a Diophantine constraint with respect to $R$.
\end{theorem}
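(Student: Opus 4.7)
The plan is to construct the desired data $(m,\gamma_\bullet,\sigma_\bullet)$ by choosing a basis of $\H^0(X_\FF,L_\FF^{\otimes m})$ adapted to the filtration by order of vanishing at $x$, then to verify the Diophantine constraint condition via a Riemann-sum asymptotic that expresses the total vanishing order in terms of $\beta_x(L)$. The final step will be to invoke the example following Definition \ref{6.2.3}, which is a direct consequence of Corollary \ref{corollary5.5}.

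Concretely, for each large $m \in \ZZ_{>0}$ I would set $N_m = h^0(X,L^{\otimes m}) = h^0(X_\FF,L_\FF^{\otimes m})$ (the latter equality by flat base change along $\KK \rightarrow \FF$) and, for each $k\geq 0$, let $V^{(m)}_k \subseteq \H^0(X_\FF,L_\FF^{\otimes m})$ denote the subspace of sections whose local expression at $x$ lies in $\mathfrak{m}_x^k \Osh_{X_\FF,x}$; in the language of the blow-up, $V^{(m)}_k = \H^0(\widetilde{X},m\pi^*L_\FF-kE)$. I would then choose an $\FF$-basis $\sigma_1,\dots,\sigma_{N_m}$ of $V^{(m)}_0$ adapted to the decreasing filtration $\{V^{(m)}_k\}$, so that each $\sigma_j$ has a well-defined order of vanishing $o_j$ at $x$; setting $\gamma_j = o_j/m$ produces a vanishing sequence $(m,\gamma_\bullet,\sigma_\bullet)$ for $L$ at $x$ in the sense of Definition \ref{6.2.2}, defined over $\FF$.

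Next, I would establish the limit
\[
\lim_{m \to \infty} \frac{1}{m\, N_m} \sum_{j=1}^{N_m} o_j \;=\; \beta_x(L).
\]
Using the identity $\sum_j o_j = \sum_{k\geq 1}\dim V^{(m)}_k$ together with the interpretation $\dim V^{(m)}_k = h^0(\widetilde{X},mL_{k/m})$ and the fact that $h^0(\widetilde{X},mL_\gamma)$ is asymptotic to $\frac{m^g}{g!}\Vol(L_\gamma)$ for each fixed $\gamma$ (with $g = \dim X$), the left-hand side rewrites as a Riemann-type sum approximating $\frac{1}{\Vol(L)}\int_0^{\gamma_{\eff}} \Vol(L_\gamma)\,d\gamma = \beta_x(L)$. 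To pass from pointwise volume convergence to the Riemann sum limit one combines a uniform upper bound of the form $h^0(\widetilde{X},mL_\gamma) \leq C\, m^g$ (coming from the boundedness of $\Vol$ on bounded subsets of $\N^1(\widetilde{X})_\RR$) with the continuity and compact support of $\gamma \mapsto \Vol(L_\gamma)$ on $[0,\gamma_{\eff}]$ recorded in \cite[\S 4]{McKinnon-Roth}.

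Finally, given the hypothesis $\beta_x(L) > 1/R$, the limit just established forces $\sum_j \gamma_j = \frac{1}{m}\sum_j o_j > N_m/R = h^0(X,L^{\otimes m})/R$ for all $m$ sufficiently large. By the example following Definition \ref{6.2.3} (namely, Corollary \ref{corollary5.5} applied to $L^{\otimes m}$ with weights $c_j := R\gamma_j$, whose sum then exceeds $h^0(X,L^{\otimes m})$), the data $(m,\gamma_\bullet,\sigma_\bullet)$ is a Diophantine constraint with respect to $R$, as required. The main obstacle I anticipate is justifying the Riemann-sum passage to the limit; this is precisely the uniformity step present in \cite[Theorem 5.1]{McKinnon-Roth} and relies on continuity of the volume function on the Néron-Severi space together with the standard polynomial-type upper bound on $h^0(\widetilde{X},m L_\gamma)$.
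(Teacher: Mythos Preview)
Your proposal is correct and follows the same overall strategy as the paper: build an $\FF$-basis of $\H^0(X_\FF, L_\FF^{\otimes m})$ adapted to a vanishing-order filtration at $x$, show that $\sum_j\gamma_j$ exceeds $h^0(X,L^{\otimes m})/R$ for $m\gg 0$, and invoke Corollary~\ref{corollary5.5} (via the example following Definition~\ref{6.2.3}).

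Where you differ is in how the key inequality is obtained. You use the \emph{full} filtration $\{V^{(m)}_k\}_{k\geq 0}$ and establish the limit $\frac{1}{mN_m}\sum_j o_j\to\beta_x(L)$ by a Riemann-sum argument; as you correctly anticipate, the uniformity in this passage is the delicate step. The paper instead first invokes \cite[Lemma~5.5]{McKinnon-Roth} to select finitely many rational levels $0<\gamma_1<\cdots<\gamma_r<\gamma_{\eff}$ at which a weighted telescoping sum of the values $f(\gamma_j)=\Vol(L_{\gamma_j})/\Vol(L)$ already exceeds $1$, and only then chooses $m$ large enough that the finitely many ratios $h^0(\widetilde X,(L^{\otimes m})_{m\gamma_j})/h^0(X,L^{\otimes m})$ approximate these values. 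This sidesteps the uniform Riemann-sum limit entirely, at the cost of a coarser filtration with only $r$ steps. Your route is more direct and yields the exact asymptotic; the paper's is shorter because the analytic work is pre-packaged in the cited lemma.

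Two small points to tidy. First, the definition in \S\ref{6.2.2} requires $\gamma_j\in\QQ_{>0}$, so you should discard the basis elements with $o_j=0$ (they contribute nothing to $\sum_j\gamma_j$ anyway). Second, the identification $V^{(m)}_k=\H^0(\widetilde X,m\pi^*L_\FF-kE)$ can fail for small $k$ when $x$ is a singular point; the paper handles the analogous issue via \cite[Lemma~5.4.24]{Laz}, and in your argument the discrepancy affects only boundedly many $k$ and is asymptotically negligible in the Riemann sum.
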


\begin{proof}
Let $X_{\mathbf{F}}$ denote the base change of $X$ with respect to the finite field extension $\mathbf{F} / \KK$ and let
$\pi : \widetilde{X} \rightarrow X_{\mathbf{F}} $ be the blow-up of $X$ at the closed point of $X_{\mathbf{F}}$ corresponding to $x \in X(\mathbf{F})$.  Let $E$ denote the exceptional divisor, let $L_{\mathbf{F}}$ denote the pull-back of $L$ to $X_{\mathbf{F}}$ and let $L_\gamma$ denote the $\RR$-line bundle 
$ L_\gamma = \pi^* L_{\mathbf{F}} - \gamma E$ on $\widetilde{X}$, for $\gamma \in \RR_{\geq 0}$.

Since $X$ is assumed to be geometrically irreducible, we have:
$$\beta_x(L) = \int_0^{\gamma_{\mathrm{eff}}} \frac{\operatorname{Vol}(L_\gamma)}{\operatorname{Vol}(L)} d \gamma = \int_0^{\gamma_{\mathrm{eff}}} f(\gamma) d \gamma;$$
here
$$ f(\gamma) = \frac{\operatorname{Vol}(L_\gamma)}{\operatorname{Vol}(L)}.$$

By assumption we have $\beta_x(L) > \frac{1}{R}$.  This assumption in conjunction with \cite[Lemma 5.5]{McKinnon-Roth} implies existence of a positive integer $r$ and rational numbers 
$$0 < \gamma_1 < \dots < \gamma_r < \gamma_{\mathrm{eff},x}(L)$$ so that, if we set $\gamma_{r+1} = \gamma_{\eff,x}(L)$, we have:
$$ \sum\limits_{j=1}^r c_j ( f(\gamma_j) - f(\gamma_{j+1})) > 1;$$
here $c_j = R\gamma_j$, for $j=1,\dots, r$.    

We now have, for all $\gamma \geq 0$: 
$$\lim\limits_{m \to \infty} \frac{ h^0(\widetilde{X},(L^{\otimes m})_{m \gamma})}{h^0(X, L^{\otimes m}) } = f(\gamma) $$
and it follows that by taking $m \gg 0$ we can ensure that each 
$$\frac{ h^0(\widetilde{X}, (L^{\otimes m})_{m\gamma_j})}{ h^0(X, L^{\otimes m}) }$$ is sufficiently close to $f(\gamma_j)$ so that: 
\begin{equation}\label{claim3:eqn2}
1 < \frac{1}{h^0(X, L^{\otimes m})} \left( \sum\limits_{j=1}^r c_j (h^0(\widetilde{X}, (L^{\otimes m})_{m\gamma_j}) - h^0(\widetilde{X}, (L^{\otimes m})_{m\gamma_{j+1}}) )\right)\text{.}
\end{equation}
In addition, by increasing $m$ if necessary we may assume that the $\gamma_j m$ are integers and also, by \cite[Lemma 5.4.24, p.~310]{Laz} for instance, that 
\begin{equation}\label{claim3:eqn2'}\pi_* \Osh_{\widetilde{X}}(-m\gamma_j E) = \mathcal{I}_x^{m \gamma_j},
\end{equation} 
for all $j = 1,\dots,r$.

In what follows we fix such a large integer $m$ and our goal is to construct a vanishing sequence for $L$ at $x$ with respect to $m$ which is  defined over $\mathbf{F}$ and which is a Diophantine constraint with respect to $R$.  To this end, let $V$ denote the $\mathbf{F}$-vector space $\Gamma(X_{\mathbf{F}}, L_{\mathbf{F}}^{\otimes m})$, let $N = \dim V - 1$ and $V^j = \Gamma(\widetilde{X},(L^{\otimes m}  )_{m\gamma_j})$, for $j = 1,\dots, r$.

Using \eqref{claim3:eqn2'} 
we deduce:
\begin{enumerate}
\item[(a)]{$V^j = \H^0(X_{\mathbf{F}}, \mathcal{I}_x^{m \gamma_j} \otimes L_{\mathbf{F}}^{\otimes m})$, for $j = 1,\dots, r$;}
\item[(b)]{$ V^{j+1} \subseteq V^j$, for $j = 1,\dots, r-1$; and}
\item[(c)]{each element $\sigma_j$ of $V^j$ is locally an element of $\mathfrak{m}_x^{m \gamma_j} \Osh_{X_{\mathbf{F}},x}$.}
\end{enumerate}
Let $V^0 = V$, $\ell_j = \dim V^j$, for $j=0,\dots, r$, and let $s_{r,1},\dots, s_{r,\ell_r}$ be an $\mathbf{F}$-basis for $V^r$.  We can extend this to a basis for $V^{r-1}$ which we denote by:
$s_{r,1},\dots, s_{r,\ell_r},s_{r-1,\ell_r + 1},\dots, s_{r-1,\ell_{r-1}}. $
Recursively, we can construct an $\mathbf{F}$-basis for $V^j$ extending the $\mathbf{F}$-basis for $V^{j+1}$, for $j = 1,\dots, r-1$, and we denote such a basis as:
$ s_{r,1},\dots, s_{r,\ell_r},\dots, s_{j,\ell_{j+1}+1},\dots, s_{j,\ell_j}$.  In this way, we obtain $\ell_1$ $\mathbf{F}$-linearly independent elements of the $\mathbf{F}$-vector space $V$:
$$s_{r,1},\dots, s_{r,\ell_r},\dots, s_{j,\ell_{j+1}+1},\dots, s_{j,\ell_j},\dots, s_{1,\ell_2 + 1},\dots, s_{1,\ell_1}. $$

Since the very ample line bundle $L^{\otimes m}$ is defined over $\KK$, if $s_0,\dots, s_N$ denotes a $\KK$-basis for the $\KK$-vector space $\H^0(X,L^{\otimes m})$, then each of the $\mathbf{F}$-linearly independent sections $s_{j,k}$ of $L_{\mathbf{F}}^{\otimes m}$ is an $\mathbf{F}$-linear combination of the $s_0,\dots, s_N$.

Let $\ell_{r+1} = 0$, for each $1 \leq j \leq r$ and each $\ell_{j+1} + 1 \leq k \leq \ell_j$ let the sections $s_{j,k} \in V^j$ have weight $c_{j,k} = c_j$, and let $\eta_{j,k} = \gamma_j$.  In this notation equation \eqref{claim3:eqn2} implies that:
$$ \sum\limits_{j = 1}^r \sum\limits_{k = \ell_{j+1} + 1}^{\ell_j} c_{j,k} > N+1 $$
and it follows, in light of Corollary \ref{corollary5.5}, that $(m,\eta_\bullet,\sigma_\bullet)$ with $\eta_\bullet = ( \eta_{j,\ell})$ and 
$\sigma_\bullet = (s_{j,\ell})$, for $1 \leq j \leq r$ and $\ell_{j+1} + 1 \leq \ell \leq \ell_j$,
is a vanishing sequence for $L$ with respect to $m$ at $x$ and defined over $\mathbf{F}$
which  is also a Diophantine constraint with respect to $R$.
\end{proof}

Theorem \ref{theorem7.1} has the following consequence:

\begin{corollary}\label{corollary7.2} Fix a real number $R> 0$. Continuing with the assumptions that $X$ is geometrically irreducible and $x \in X(\overline{\KK})$,  if $\beta_x(L) > \frac{1}{R}$, then there exists a proper subvariety $W \subsetneq X$ defined over $\KK$ and containing $x$  so that 
$$ \alpha_{x,X}(\{y_i\},L) \geq \frac{1}{R}$$ for all infinite sequences $\{y_i\} \subseteq X(\KK) \backslash W(\KK)$ of distinct points with unbounded height.
\end{corollary}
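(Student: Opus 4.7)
The plan is to combine Theorem \ref{theorem7.1} and Proposition \ref{proposition6.2} in a straightforward way; the hypothesis $\beta_x(L) > \frac{1}{R}$ is precisely the input needed for the first, while the conclusion of the first is precisely the input needed for the second.

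First, let $\FF \subseteq \overline{\KK}$ denote the field of definition of $x$. Applying Theorem \ref{theorem7.1} with the given $R > 0$ and the hypothesis $\beta_x(L) > \frac{1}{R}$, I would obtain a positive integer $m \in \ZZ_{>0}$ together with a vanishing sequence $(m, \gamma_\bullet, \sigma_\bullet)$ for $L$ at $x$ with respect to $m$, defined over $\FF$, which is simultaneously a Diophantine constraint with respect to $R$. This is the substantive step, but all the work has been done in the preceding theorem; I would just cite it.

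Next, with this vanishing sequence and Diophantine constraint in hand, I would directly apply Proposition \ref{proposition6.2}. Its hypothesis is exactly the existence of a triple $(m,\gamma_\bullet,\sigma_\bullet)$ as above, and its conclusion produces a proper Zariski closed subset $W \subsetneq X$ defined over $\KK$, containing $x$ as a $\overline{\KK}$-point, such that
\[
\alpha_{x,X}(\{y_i\}, L) \geq \frac{1}{R}
\]
for every infinite sequence $\{y_i\} \subseteq X(\KK) \setminus W(\KK)$ of distinct points with unbounded height. This is exactly the conclusion of the corollary.

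There is no real obstacle: the corollary is a formal consequence of the two preceding results, essentially a restatement of Proposition \ref{proposition6.2} with the Diophantine-constraint hypothesis replaced by the more geometric hypothesis $\beta_x(L) > \frac{1}{R}$ via Theorem \ref{theorem7.1}. The only thing to verify is that the $W$ produced by Proposition \ref{proposition6.2} indeed contains $x$ and is defined over $\KK$, both of which are built into the statement of that proposition. Thus the proof reduces to two sentences citing the two results in sequence.
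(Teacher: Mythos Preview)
Your proposal is correct and matches the paper's own proof exactly: the paper simply writes ``Consequence of Theorem \ref{theorem7.1} and Proposition \ref{proposition6.2}.'' Your elaboration of how the output of Theorem \ref{theorem7.1} feeds directly into the hypothesis of Proposition \ref{proposition6.2} is precisely the intended argument.
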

 
\begin{proof}
Consequence of Theorem \ref{theorem7.1} and Proposition \ref{proposition6.2}.
\end{proof}

\np{}\label{7.3}{\bf Asymptotic volume functions and their relation to Seshadri constants.} Here, in order to prepare for \S \ref{proof:main:results}, we make a few remarks about Seshadri constants and how they are related to the asymptotic relative volume constants of McKinnon-Roth \cite{McKinnon-Roth}.  To do so let
$$\epsilon_x(L)= \sup \{\gamma \in \RR_{\geq 0}: L_{\gamma, \overline{\KK}} \text{ is nef} \}$$ denote the Seshadri constant of $L$ at $x \in X(\overline{\KK})$.  
We refer to \cite[\S 3]{McKinnon-Roth} and  \cite{Laz} for more details regarding Seshadri constants.  A basic result is that, if we identify $x$ with the closed point of $X_{\overline{\KK}}$ that it determines, then 
$$\epsilon_{x,X}(L) =\inf\limits_{x \in C\subseteq X_{\overline{\KK}}} \left\{ \frac{L_{\overline{\KK}}.C}{\mathrm{mult}_x(C)} \right\}, $$
where the infimum is taken over all reduced irreducible curves $C$ passing through $x$,
see \cite[Proposition 3.2]{McKinnon-Roth} or  \cite[Proposition 5.1.5, p.~270]{Laz} for instance.

In \cite[Corollary 4.4]{McKinnon-Roth} it is shown that 
\begin{equation}\label{eqn7.2} \beta_x(L) \geq \frac{\dim X}{\dim X+1} \epsilon_x(L);\end{equation}
this inequality is important in the proof of our main results  stated in \S \ref{main:results}.

\section{Proof of main results}\label{proof:main:results}

In this section we prove the main results of this paper, namely Theorem \ref{theorem1.1} and Corollary \ref{corollary1.2}, which we stated in \S \ref{main:results}.  For convenience of the reader we restate these results as Theorem \ref{theorem8.1} and Corollary \ref{corollary8.1} below.
To prepare for these results, let $\KK$ be the function field of an irreducible projective variety $Y \subseteq \PP^r_{\overline{\kk}}$ defined over an algebraically closed field $\overline{\kk}$ of characteristic zero and non-singular in codimension $1$ and fix a prime divisor $\mathfrak{p} \subseteq Y$.   Fix an algebraic closure $\overline{\KK}$ of $\KK$ and suppose that $X \subseteq \PP^n_{\KK}$ is a geometrically irreducible subvariety.  The results we prove here show how the subspace theorem can be used to relate $\alpha_x(L)$, for $x \in X(\overline{\KK})$ and $L = \Osh_{\PP^n_{\KK}}(1)|_{ X}$, to $\beta_x(L)$.

\begin{theorem}\label{theorem8.1}  Let $\KK$ be the function field of an  irreducible projective variety $Y \subseteq \PP^r_{\overline{\kk}}$, defined over an algebraically closed field $\overline{\kk}$ of characteristic zero, assume that $Y$ is non-singular in codimension $1$ and fix a prime divisor $\mathfrak{p} \subseteq Y$. Fix an algebraic closure $\overline{\KK}$ of $\KK$ and suppose that $X \subseteq \PP^n_{\KK}$ is a geometrically irreducible  subvariety, that $x \in X(\overline{\KK})$, and that $L = \Osh_{\PP^n_{\KK}}(1)|_{X}$.  In this setting, either
$$\alpha_x(L;\mathfrak{p}) \geq \beta_x(L) \geq \frac{\dim X}{\dim X + 1}\epsilon_x(L)  $$
or 
 $$\alpha_{x,X}(L;\mathfrak{p}) = \alpha_{x,W}(L|_{W};\mathfrak{p})  $$ for some proper subvariety $W \subsetneq X$.  
\end{theorem}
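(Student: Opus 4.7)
The plan is to deduce Theorem \ref{theorem8.1} from the pieces already assembled in \S \ref{6} and \S \ref{7}; the dichotomy in the statement will be obtained by splitting on whether or not $\alpha_x(L;\mathfrak{p}) < \beta_x(L)$.

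First I would note that the second inequality $\beta_x(L) \geq \frac{\dim X}{\dim X+1}\epsilon_x(L)$ is unconditional: it is \eqref{eqn7.2}, due to McKinnon-Roth \cite[Corollary 4.4]{McKinnon-Roth}, and depends only on the geometry of the blow-up $\widetilde{X} = \mathrm{Bl}_x(X_{\FF})$ and the volume function on $\N^1(\widetilde{X})_{\RR}$, both of which are available under our hypothesis that $X$ is geometrically irreducible. Consequently, if $\alpha_x(L;\mathfrak{p}) \geq \beta_x(L)$ then the first alternative of the theorem holds immediately, and the whole proof reduces to treating the complementary case $\alpha_x(L;\mathfrak{p}) < \beta_x(L)$ and producing in that case a proper subvariety $W \subsetneq X$ computing $\alpha_x(L;\mathfrak{p})$.

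Assume then that $\alpha_x(L;\mathfrak{p}) < \beta_x(L)$. Since $\beta_x(L)$ is a finite positive real number (it is the integral of a bounded nonnegative function over the finite interval $[0,\gamma_{\eff}]$ and is strictly positive because $\Vol(L_{\gamma}) > 0$ for $\gamma$ in a neighbourhood of $0$), I can pick a real number $R$ with
$$\frac{1}{\beta_x(L)} < R < \frac{1}{\alpha_x(L;\mathfrak{p})},$$
with the convention that the upper bound equals $+\infty$ if $\alpha_x(L;\mathfrak{p}) = 0$. For such $R$ the two conditions $\beta_x(L) > 1/R$ and $\alpha_x(L;\mathfrak{p}) < 1/R$ hold simultaneously. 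The first of them triggers Theorem \ref{theorem7.1}, which supplies an integer $m \in \ZZ_{>0}$ and a vanishing sequence $(m,\gamma_\bullet,\sigma_\bullet)$ for $L$ at $x$, defined over the field of definition $\FF$ of $x$, that is also a Diophantine constraint with respect to $R$.

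Feeding this vanishing sequence and Diophantine constraint together with the second inequality $\alpha_x(L;\mathfrak{p}) < 1/R$ into Theorem \ref{theorem6.3} then yields a proper subvariety $W \subsetneq X$, defined over $\KK$ and containing $x$ as a $\overline{\KK}$-point, such that
$$\alpha_{x,X}(L;\mathfrak{p}) = \alpha_{x,W}(L|_{W};\mathfrak{p}),$$
which is precisely the second alternative of the theorem. The delicate work in the argument has already been carried out elsewhere: the hard part was the construction of Corollary \ref{corollary5.5} (the effective subspace theorem extended to $\overline{\KK}$-linear forms) and the proof of Theorem \ref{theorem7.1} (which converts the positivity datum $\beta_x(L) > 1/R$ into a vanishing sequence of the precise quantitative shape required by Corollary \ref{corollary5.5}). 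The last step here is simply to slip a real number $R$ into the open interval between $1/\beta_x(L)$ and $1/\alpha_x(L;\mathfrak{p})$ and then to invoke Theorems \ref{theorem7.1} and \ref{theorem6.3} in sequence.
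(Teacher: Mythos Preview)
Your proposal is correct and follows essentially the same approach as the paper: reduce to the case $\alpha_x(L;\mathfrak{p}) < \beta_x(L)$, choose $R$ with $\alpha_x(L;\mathfrak{p}) < 1/R < \beta_x(L)$, then invoke Theorem \ref{theorem7.1} followed by Theorem \ref{theorem6.3}. The paper's proof is slightly terser but the logical structure is identical, and your explicit mention that the inequality $\beta_x(L) \geq \frac{\dim X}{\dim X+1}\epsilon_x(L)$ holds unconditionally via \eqref{eqn7.2} is a welcome clarification.
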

\begin{proof}[Proof of Theorem \ref{theorem8.1} and Theorem \ref{theorem1.1}]  It suffices to show that if $\alpha_x(L;\mathfrak{p}) < \beta_x(L)$, then $X$ has dimension at least two and $\alpha_{x,X}(L) = \alpha_{x,W}(L|_{W})$ for some proper subvariety $W \subsetneq X$ having dimension at least $1$ and containing $x$.  To this end, if
$\alpha_x(L;\mathfrak{p}) < \beta_x(L)$, then we can choose $R> 0$ so that 
$$\alpha_x(L;\mathfrak{p}) < 1/R < \beta_x(L).$$ 
Since $\beta_x(L) > \frac{1}{R}$, Theorem \ref{theorem7.1} implies existence of a vanishing sequence $(m,\gamma_\bullet, \sigma_\bullet)$ for $L$ at $x$ with respect to  some positive integer $m$ which is also a Diophantine constraint with respect to $R$.  In addition we have $\alpha_{x,X}(L; \mathfrak{p})<\frac{1}{R}$.   The hypothesis of Theorem \ref{theorem6.3} is satisfied and its conclusion implies that $\alpha_{x,X}(L) = \alpha_{x,W}(L|_{W})$ for some proper subvariety $W \subsetneq X$ having dimension at least $1$ and containing $x$. 
\end{proof}

Theorem \ref{theorem8.1} has the following consequence:

\begin{corollary}\label{corollary8.1}  In the setting of Theorem \ref{theorem8.1}, we have that $\alpha_x(L;\mathfrak{p}) \geq \frac{1}{2} \epsilon_x(L)$.  If $\alpha_x(L;\mathfrak{p}) = \frac{1}{2} \epsilon_x(L)$,  then $\alpha_{x,X}(L;\mathfrak{p}) = \alpha_{x,C}(L|_{ C};\mathfrak{p})$ for some curve $C \subseteq X$ defined over $\KK$.
\end{corollary}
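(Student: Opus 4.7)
The plan is to establish both the inequality $\alpha_x(L;\mathfrak{p})\geq\tfrac{1}{2}\epsilon_x(L)$ and the equality clause simultaneously by induction on $d=\dim X$, using Theorem \ref{theorem8.1} as the inductive engine and propagating the hypothesis to subvarieties through the infimum characterization $\epsilon_{x,X}(L)=\inf_C (L_{\overline{\KK}}\cdot C)/\mathrm{mult}_x C$ of the Seshadri constant recalled in \S \ref{7.3}.

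For the base case $d=1$, Theorem \ref{theorem8.1} gives $\alpha_x(L;\mathfrak{p})\geq\tfrac{d}{d+1}\epsilon_x(L)=\tfrac{1}{2}\epsilon_x(L)$ at once: its second alternative is vacuous, since any proper $W\subsetneq X$ is zero-dimensional and carries no infinite sequence of $\KK$-rational points with unbounded height, forcing $\alpha_{x,W}(L|_W;\mathfrak{p})=\infty$. If equality holds, take $C=X$.

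For the inductive step $d\geq 2$, I apply Theorem \ref{theorem8.1}. In its first alternative, $\alpha_x(L;\mathfrak{p})\geq\tfrac{d}{d+1}\epsilon_x(L)\geq\tfrac{2}{3}\epsilon_x(L)\geq\tfrac{1}{2}\epsilon_x(L)$, strictly when $\epsilon_x(L)>0$. In its second alternative $\alpha_{x,X}(L;\mathfrak{p})=\alpha_{x,W}(L|_W;\mathfrak{p})$ for some proper $W\subsetneq X$ containing $x$; I replace $W$ by its $\KK$-irreducible component through $x$, noting that any approximation sequence converging to $x$ eventually enters a bounded $v$-adic neighbourhood meeting only this component (using Proposition \ref{proposition4.5}). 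Curves in $W_{\overline{\KK}}$ through $x$ form a subfamily of those in $X_{\overline{\KK}}$, so the infimum formula gives $\epsilon_{x,W}(L|_W)\geq\epsilon_{x,X}(L)$, and the induction hypothesis yields
\[
\alpha_{x,X}(L;\mathfrak{p})=\alpha_{x,W}(L|_W;\mathfrak{p})\geq\tfrac{1}{2}\epsilon_{x,W}(L|_W)\geq\tfrac{1}{2}\epsilon_{x,X}(L),
\]
settling the inequality. For the equality assertion with $\epsilon_x(L)>0$, the first alternative is excluded (it would demand $\tfrac{1}{2}\geq\tfrac{d}{d+1}$, impossible for $d\geq 2$), so we are forced into the second alternative; the displayed inequalities collapse to $\alpha_{x,W}(L|_W;\mathfrak{p})=\tfrac{1}{2}\epsilon_{x,W}(L|_W)$, and induction on $\dim W<d$ produces a $\KK$-curve $C\subseteq W\subseteq X$ through $x$ with $\alpha_{x,C}(L|_C;\mathfrak{p})=\alpha_{x,X}(L;\mathfrak{p})$.

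The principal obstacles I anticipate are two bookkeeping points rather than a single deep step. First, the induction hypothesis is stated for geometrically irreducible targets; when the $\KK$-irreducible component of $W$ through $x$ fails to be geometrically irreducible, I will pass to a suitable finite extension via the base-change formalism of \S \ref{2.7} and transport the conclusion back to $\KK$ using the height relations \eqref{eqn1.10}--\eqref{eqn1.11}. Second, the degenerate equality case $\epsilon_x(L)=\alpha_x(L;\mathfrak{p})=0$ in dimension $\geq 2$ falls outside the dichotomy, since Theorem \ref{theorem8.1}'s first alternative then provides no strict inequality; this parallels the analogous technicality in the proof of \cite[Theorem 6.3]{McKinnon-Roth}, and will be handled similarly by observing that for very ample $L$ the base-case inequality forces $\alpha_{x,C}(L|_C;\mathfrak{p})\geq\tfrac{1}{2}\deg(L|_C)>0$ on every $\KK$-curve through $x$, so this branch is either vacuous or reduces to the positive-Seshadri case already treated.
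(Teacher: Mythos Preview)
Your proposal is correct and takes essentially the same inductive approach as the paper: both argue by induction on $\dim X$, apply the dichotomy of Theorem~\ref{theorem8.1}, use the monotonicity $\epsilon_{x,W}(L|_W)\geq\epsilon_{x,X}(L)$ (the paper citing \cite[Proposition~3.4(c)]{McKinnon-Roth}, you the infimum formula directly), and iterate down to a curve in the equality case. Your handling of irreducibility via passing to a $\KK$-component plus base change is the same in spirit as the paper's one-line appeal to \cite[Lemma~2.17]{McKinnon-Roth}, and your explicit discussion of the degenerate case $\epsilon_x(L)=0$ is bookkeeping the paper leaves implicit.
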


\begin{proof}
Follows from Theorem \ref{theorem8.1} using induction.  In more detail,  let $g$ denote the dimension of $X$. If $g \geq 1$, then 
$$\frac{g}{g+1}\epsilon_x(L) \geq \frac{1}{2}\epsilon_x(L). $$
Thus if $\alpha_x(L;\mathfrak{p}) \geq \frac{g}{g+1}\epsilon_x(L)$, then $\alpha_x(L;\mathfrak{p}) \geq \frac{1}{2}\epsilon_x(L)$.  If $\alpha_x(L;\mathfrak{p}) < \frac{g}{g+1}\epsilon_x(L)$, then Theorem \ref{theorem8.1} implies that $\alpha_x(L;\mathfrak{p}) = \alpha_{x,W}(L|_{W};\mathfrak{p})$ for some proper subvariety $W \subsetneq X$ and \cite[Lemma 2.17]{McKinnon-Roth} (proven for the case that $\KK$ is a number field but equally valid for the case that $\KK$ is a function field) implies that we may take $W$ to be irreducible over $\overline{\KK}$.   By induction, $\alpha_{x,W}(L|_{W};\mathfrak{p}) \geq \frac{1}{2}\epsilon_x(L|_{W})$.  On the other hand, $\alpha_x(L;\mathfrak{p}) = \alpha_{x,W}(L|_{W};\mathfrak{p})$ and $\epsilon_x(L) \leq \epsilon_{x,W}(L|_{W})$, by \cite[Proposition 3.4 (c)]{McKinnon-Roth}, and it follows that 
$$\alpha_x(L;\mathfrak{p}) = \alpha_{x,W}(L|_{W};\mathfrak{p}) \geq \frac{1}{2} \epsilon_{x}(L|_{W}) \geq \frac{1}{2}\epsilon_x(L). $$
Finally if $\alpha_x(L;\mathfrak{p}) = \frac{1}{2}\epsilon(L|_{W})$, then we conclude that $W$ is a curve defined over $\KK$.  
\end{proof}

\section{Approximation constants for Abelian varieties and curves}\label{9}

Throughout this section, we let $\KK$ be the function field of a smooth projective curve $C$ over an algebraically closed field $\overline{\kk}$ of characteristic zero.   We also fix an algebraic closure $\overline{\KK}$ of $\KK$.  Our goal here is to use the properties of the distance functions, which we recorded in \S \ref{4}, to prove an approximation theorem for rational points of an abelian variety $A$ over $\KK$.
This theorem, Theorem \ref{theorem9.4}, and its proof is very similar to what is done in the number field setting, see for example \cite[p.~98--99]{Serre:Mordell-Weil-Lectures}.  We then use Theorem \ref{theorem9.4} to study approximation constants for $\overline{\KK}$-rational points of $B$ an irreducible projective curve over $\KK$.  Specifically, in \S \ref{9.6} we prove Corollary \ref{corollary1.3} which we stated in \S \ref{main:results}.

\np{}\label{9.2} We recall a special case of Roth's theorem for $\PP^1$ from which we deduce an approximation result, Theorem \ref{theorem9.2}, applicable to projective varieties over $\KK$.  To state Roth's theorem for $\PP^1$, fix $p \in C(\overline{\kk})$ and let 
$$ d_p(\cdot,\cdot) : \PP^1(\overline{\KK}) \times \PP^1(\overline{\KK}) \rightarrow [0,1]$$
denote the projective distance  function that it determines.

\begin{theorem}[Roth's theorem for $\PP^1$]\label{theorem9.1}  Let $\KK$ be the function field of a smooth projective curve over an algebraically closed field of characteristic zero.  
Let $x \in \PP^1(\overline{\KK})$ and $\delta > 2$.  Then there is no infinite sequence $\{x_i \} \subseteq \PP^1(\KK)$ of distinct points with unbounded height so that
$$d_p(x,x_i) \to 0 \text{ and }d_p(x,x_i)H_{\Osh_{\PP^1}(1)}(x_i)^{\delta} \leq 1, $$
for $i \to \infty$.
\end{theorem}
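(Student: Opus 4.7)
The plan is to derive this Roth-type statement from Wang's subspace theorem for function fields in the additive form, Theorem \ref{theorem5.2}, applied to $\PP^1_\KK$. Let $\FF \subseteq \overline{\KK}$ be the field of definition of $x$ and write $x = [a:b]$ with $a,b \in \FF$. I would take
$$\sigma_1 = b x_0 - a x_1 \in \FF[x_0,x_1]$$
and choose $\sigma_2 \in \{x_0, x_1\}$ whose vanishing locus is not $x$. Then $\sigma_1,\sigma_2$ are $\FF$-linearly independent, and $\sigma_1$ vanishes to first order at $x$.

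Suppose, toward a contradiction, that an infinite sequence $\{x_i\} \subseteq \PP^1(\KK)$ of distinct points with unbounded height satisfies $d_p(x,x_i) \to 0$ and $d_p(x,x_i) H_{\Osh_{\PP^1}(1)}(x_i)^\delta \leq 1$; pass to a subsequence so that $H(x_i) \to \infty$. My first step would be to convert the distance-function hypothesis into a bound on the $\mathfrak{p}$-adic norm of $\sigma_1$. Applying Lemma \ref{lemma6.1} to $\sigma_1$ with $m = 1$ gives, for any $\eta > 0$ and all sufficiently large $i$,
$$||\sigma_1(x_i)||_{\mathfrak{p},\KK} \leq d_p(x,x_i)^{1-\eta} \leq H(x_i)^{-\delta(1-\eta)}.$$
Since $\delta > 2$, I fix $\eta > 0$ small enough that $\delta' := \delta(1-\eta) > 2$, and then fix $\epsilon \in (0,\,\delta'-2)$. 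Via the identities $\lambda_{\sigma,|\cdot|_{\mathfrak{p},\KK}}(\cdot) = \log_{\mathbf{c}}||\sigma(\cdot)||_{\mathfrak{p},\KK}$ and $h = -\log_{\mathbf{c}} H$ (with $\mathbf{c} \in (0,1)$), the display above translates into
$$\lambda_{\sigma_1,|\cdot|_{\mathfrak{p},\KK}}(x_i) \geq \delta'\, h(x_i),$$
while $\lambda_{\sigma_2,|\cdot|_{\mathfrak{p},\KK}}(x_i) \geq 0$ because $||\sigma_2(x_i)||_{\mathfrak{p},\KK} \leq 1$.

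Next, I would apply Theorem \ref{theorem5.2} with the single place $\mathfrak{p} = p$ and the two linear forms $\sigma_1,\sigma_2$; the maximizing subset in the statement is $J = \{1,2\}$. The theorem yields a proper union $W \subsetneq \PP^1_\KK$ of linear subspaces defined over $\KK$, together with constants $a_\epsilon, b_\epsilon > 0$, such that every $x_i \in \PP^1(\KK) \setminus W(\KK)$ satisfies either $h(x_i) \leq a_\epsilon$ or
$$\lambda_{\sigma_1,|\cdot|_{\mathfrak{p},\KK}}(x_i) + \lambda_{\sigma_2,|\cdot|_{\mathfrak{p},\KK}}(x_i) \leq (2+\epsilon)\, h(x_i) + b_\epsilon.$$
Since $h(x_i) \to \infty$, the first alternative fails for all large $i$; combining the second with the lower bounds above produces $(\delta'-2-\epsilon)\,h(x_i) \leq b_\epsilon$, again contradicting $h(x_i) \to \infty$. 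Hence $x_i \in W(\KK)$ for all sufficiently large $i$. But $W$ is a proper subvariety of $\PP^1_\KK$, so $W(\KK)$ is finite, contradicting that the $x_i$ are distinct. The main (indeed only) non-formal ingredient is Lemma \ref{lemma6.1}, which converts the geometric bound on the distance $d_p$ into the required bound on the $\mathfrak{p}$-adic norm $||\sigma_1||_{\mathfrak{p},\KK}$ of the vanishing linear form; everything else is a direct application of the subspace theorem.
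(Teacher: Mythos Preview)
Your argument is correct, but it takes a different route from the paper. The paper's proof of Theorem~\ref{theorem9.1} is a one-line citation of Wang's 1996 effective Roth theorem for function fields, which gives the result directly. You instead recover the $n=1$ case from the paper's own extended subspace theorem (Theorem~\ref{theorem5.2}, based on Wang 2004), feeding in the single linear form $\sigma_1$ vanishing at $x$ together with a coordinate form $\sigma_2$, and invoking Lemma~\ref{lemma6.1} to pass from $d_p$ to $\|\sigma_1\|_{\mathfrak p,\KK}$. This is a legitimate and self-contained derivation within the paper's framework; the only cost is that it uses a strictly stronger and historically later theorem (the subspace theorem) to prove what is really its one-dimensional ancestor. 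Two minor points worth making explicit in your write-up: the finitely many $x_i$ lying on $\operatorname{Supp}(\sigma_1)\cup\operatorname{Supp}(\sigma_2)$ should be discarded before applying Theorem~\ref{theorem5.2}, and the exceptional set $W$ there is a finite union of proper linear subspaces of $\PP^1_\KK$, hence a finite set of $\KK$-points, which is what forces the contradiction with distinctness.
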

\begin{proof}
This is implied by the main theorem of \cite{wang:1996} for example.
\end{proof}

As in \cite[\S 7.3]{Serre:Mordell-Weil-Lectures}, combined with the local description of the distance functions given by Lemma \ref{lemma4.4}, Roth's theorem for $\PP^1$ implies:

\begin{theorem}[Compare with {\cite[First theorem on p.~98]{Serre:Mordell-Weil-Lectures}}]\label{theorem9.2}  Suppose that $\KK$ is the function field of a smooth projective curve over an algebraically closed field of characteristic zero.  Let $X \subseteq \PP^n_{\KK}$ be a projective variety and $L = \Osh_{\PP^n_{\KK}}(1)|_X$.  If $x \in X(\overline{\KK})$ and $\delta > 2$, then there is no infinite sequence $\{x_i\} \subseteq X(\KK)$ of distinct points with unbounded height with 
$$ d_p(x,x_i) \to 0 \text{ and } d_p(x,x_i) H_L(x_i)^\delta \leq 1,$$
for $i \to \infty$.
\end{theorem}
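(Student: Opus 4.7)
The plan is to induct on $\dim X$ and reduce to Roth's theorem on $\PP^1$ (Theorem \ref{theorem9.1}) by projecting through a local parameter at $x$. The case $\dim X = 0$ is vacuous, since $X(\overline{\KK})$ is finite. For $\dim X \geq 1$ I would argue by contradiction, supposing such a sequence $\{x_i\}$ exists, and I would let $\FF \subseteq \overline{\KK}$ be the field of definition of $x$.

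After a $\KK$-linear change of homogeneous coordinates on $\PP^n_\KK$, I may arrange that $x$ lies in the affine chart $\{x_0 \neq 0\}$. Setting $a_j = (x_j/x_0)(x) \in \FF$ and $v_j = x_j/x_0 - a_j$, the regular functions $v_j$ on $U = X_\FF \cap \{x_0 \neq 0\}$ vanish at $x$. After shrinking $U$ and reindexing, Lemma \ref{lemma4.4} supplies $v_1, \ldots, v_r$ generating the maximal ideal of $\Osh_{X_\FF, x}$ together with positive constants $c \leq C$ so that
$$c\, d_\mathfrak{p}(x, y) \leq \min(1, \max_j |v_j(y)|_\mathfrak{p}) \leq C\, d_\mathfrak{p}(x, y)$$
for $y$ in a bounded $v$-neighborhood of $x$ in $U(\FF)$. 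Since $d_\mathfrak{p}(x, x_i) \to 0$, for $i$ large $x_i$ lies in this neighborhood; passing to a subsequence via pigeonhole on $j \in \{1, \ldots, r\}$, I would fix $j_0$ with $|v_{j_0}(x_i)|_\mathfrak{p} = \max_j |v_j(x_i)|_\mathfrak{p}$ for all $i$ and write $u := v_{j_0}$.

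Consider $y_i := [u(x_i) : 1] \in \PP^1(\FF)$ and $z := [0:1]$. If infinitely many of the $u(x_i)$ coincide with a common value $a \in \FF$, then by $v$-continuity of $u$ combined with $u(x_i) \to u(x) = 0$ this value must be $a = 0$, so infinitely many $x_i$ lie in the hypersurface $\{u = 0\} \cap X_\FF$; its $\KK$-Zariski closure is a proper $\KK$-subvariety $X' \subsetneq X$ containing $x$ as a $\overline{\KK}$-point and infinitely many $x_i$ as $\KK$-points, and the inductive hypothesis applied to $(X', L|_{X'})$ gives a contradiction. Otherwise, after passing to a further subsequence, the $y_i$ are pairwise distinct in $\PP^1(\FF)$. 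Since $u = (x_{j_0} - a_{j_0} x_0)/x_0$ is a ratio of linear forms on $\PP^n_\FF$, the non-archimedean triangle inequality furnishes a constant with $H_{\Osh_{\PP^1_\FF}(1)}(y_i) \leq A \cdot H_{L_\FF}(x_i)$; via \eqref{eqn1.10} and \eqref{eqn2.15} this translates, in $\KK$-normalized form, into $H_{\Osh_{\PP^1_\KK}(1)}(y_i) \leq A' H_L(x_i)$, while $d_\mathfrak{p}(z, y_i) = |u(x_i)|_\mathfrak{p} \leq C\, d_\mathfrak{p}(x, x_i)$. Combined with the hypothesis $d_\mathfrak{p}(x, x_i) H_L(x_i)^\delta \leq 1$, the quantity $d_\mathfrak{p}(z, y_i) H_{\Osh_{\PP^1_\KK}(1)}(y_i)^\delta$ is uniformly bounded; moreover the product formula together with $\ord_\mathfrak{p}(u(x_i)) \to \infty$ forces $H_{\Osh_{\PP^1_\KK}(1)}(y_i) \to \infty$. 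Applying Theorem \ref{theorem9.1} over $\FF$, which is itself the function field of the smooth projective curve obtained as the normalization of $C$ in $\FF$, to the distinct sequence $\{y_i\} \subseteq \PP^1(\FF)$ then yields the desired contradiction.

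The main obstacle is the careful bookkeeping of height and distance normalizations across the extension $\FF/\KK$; in particular, via Lemma \ref{lemma3.1} and the relations of \S\ref{2.6}, one must ensure that the exponent $\delta > 2$ in the original hypothesis translates to an exponent strictly greater than $2$ for Theorem \ref{theorem9.1} invoked over $\FF$ at the place $w \mid \mathfrak{p}$ corresponding to our fixed extension of $|\cdot|_\mathfrak{p}$ to $\overline{\KK}$.
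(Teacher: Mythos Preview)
Your overall strategy---use Lemma \ref{lemma4.4} to replace $d_\mathfrak{p}(x,\cdot)$ by a maximum of local parameters and then feed one of these into Theorem \ref{theorem9.1}---is exactly the paper's argument. The paper phrases it as ``Theorem \ref{theorem9.1} applied to the coordinates of the $x_i$,'' which amounts to sending $x_i \in X(\KK)$ to $w_i = [x_{j_0}(x_i):x_0(x_i)] \in \PP^1(\KK)$ and approximating the algebraic target $[a_{j_0}:1]\in\PP^1(\overline{\KK})$; crucially, the sequence stays in $\PP^1(\KK)$ because the coordinates of a $\KK$-point are in $\KK$, and Theorem \ref{theorem9.1} already permits the target to be $\overline{\KK}$-rational.

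Your detour through $\FF$, however, is not just bookkeeping---it creates a genuine gap you correctly sensed but did not resolve. After base change one must apply Theorem \ref{theorem9.1} over $\FF$ with the $\FF$-normalised distance $d_{p'}(\cdot,\cdot)_\FF$ and the $\FF$-height $H_\FF$. Using Lemma \ref{lemma3.1} and \eqref{eqn1.10} one finds, with $e=[\FF_w:\KK_\mathfrak{p}]$ and $d=[\FF:\KK]$,
\[
d_{p'}(z,y_i)_\FF\,H_\FF(y_i)^{\delta'} \ \asymp\ d_\mathfrak{p}(x,x_i)^{\,e}\,H_L(x_i)^{\,d\delta'} \ =\ \bigl(d_\mathfrak{p}(x,x_i)\,H_L(x_i)^{d\delta'/e}\bigr)^{e},
\]
which is bounded via the hypothesis only when $d\delta'/e\le\delta$, i.e.\ $\delta'\le e\delta/d$. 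Since $e\le d$, the condition $\delta'>2$ then forces $\delta>2d/e$, which is strictly stronger than $\delta>2$ whenever $\mathfrak{p}$ is not totally inert in $\FF/\KK$. So for $2<\delta\le 2d/e$ your reduction does not close. The cure is exactly the paper's: do not shift by $a_{j_0}$; project to $w_i=[x_{j_0}(x_i):x_0(x_i)]\in\PP^1(\KK)$ and invoke Theorem \ref{theorem9.1} over $\KK$ with target $[a_{j_0}:1]\in\PP^1(\overline{\KK})$. Then $H_{\Osh_{\PP^1_\KK}(1)}(w_i)\le H_L(x_i)$ and $d_\mathfrak{p}([a_{j_0}:1],w_i)\asymp |v_{j_0}(x_i)|_\mathfrak{p}\asymp d_\mathfrak{p}(x,x_i)$, so the exponent $\delta>2$ goes through unchanged.

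One further simplification: the induction on $\dim X$ is unnecessary. If you keep \emph{all} $n$ affine coordinates $v_j=x_j/x_0-a_j$ (their common zero locus in the chart is exactly $\{x\}$) and pigeonhole on the index achieving $\max_j|v_j(x_i)|_\mathfrak{p}$, then $v_{j_0}(x_i)=0$ would force every $v_j(x_i)=0$, hence $x_i=x$, which can occur at most once; any other value can repeat only finitely often since $|v_{j_0}(x_i)|_\mathfrak{p}\to 0$. Thus the projected sequence is eventually injective and the ``$X'$-branch'' never arises.
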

\begin{proof}
As in \cite[p.~98]{Serre:Mordell-Weil-Lectures}, using the local description of the distance functions $d_p(x,\cdot)$ given by Lemma \ref{lemma4.4}, Theorem \ref{theorem9.2} follows from Theorem \ref{theorem9.1} applied to the coordinates of the $x_i$.
\end{proof}

Theorem \ref{theorem9.2} can be used to give a lower bound for the approximation constant $\alpha_x(L)$.

\begin{corollary}\label{corollary9.2'}
In the setting of Theorem \ref{theorem9.2}, $\alpha_x(L) \geq 1/2$.
\end{corollary}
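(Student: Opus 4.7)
The plan is to argue by contradiction, so I would assume $\alpha_x(L) < 1/2$ and produce an infinite sequence of distinct $\KK$-rational points violating Theorem \ref{theorem9.2}. Starting from the assumption, the definition of $\alpha_x(L)$ via \eqref{alpha:x} yields an infinite sequence $\{y_i\} \subseteq X(\KK)$ of distinct points with unbounded height, $d_p(x,y_i) \to 0$, and $\alpha_x(\{y_i\},L) < 1/2$. Using remark (b) of \S \ref{3.8'}, I would pass to a subsequence along which $H_L(y_i) \to \infty$; by remark (c), this only decreases $\alpha_x(\{y_i\},L)$, so it remains strictly less than $1/2$.

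Next, I would fix a rational number $\gamma$ with $\alpha_x(\{y_i\},L) < \gamma < 1/2$. By definition of $\alpha_x(\{y_i\},L)$ as an infimum, there exists a constant $M > 0$ with
\[
d_p(x,y_i)^{\gamma} H_L(y_i) \leq M \quad \text{for all } i.
\]
Setting $\delta := 1/\gamma$, we have $\delta > 2$, and raising the previous inequality to the $\delta$-th power gives $d_p(x,y_i) H_L(y_i)^{\delta} \leq M^{\delta}$.

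To obtain the normalization $\leq 1$ required by Theorem \ref{theorem9.2}, I would simply trade a bit of the exponent for the growth of the height. Pick any $\delta' \in (2,\delta)$. Then
\[
d_p(x,y_i) H_L(y_i)^{\delta'} = \frac{d_p(x,y_i) H_L(y_i)^{\delta}}{H_L(y_i)^{\delta-\delta'}} \leq \frac{M^{\delta}}{H_L(y_i)^{\delta-\delta'}},
\]
and since $H_L(y_i) \to \infty$, the right-hand side tends to $0$. In particular, by discarding finitely many initial terms, the tail of $\{y_i\}$ satisfies $d_p(x,y_i) H_L(y_i)^{\delta'} \leq 1$ while also satisfying $d_p(x,y_i) \to 0$. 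This directly contradicts Theorem \ref{theorem9.2} applied with the exponent $\delta' > 2$, completing the argument.

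This is essentially an unwinding of definitions, and I do not anticipate a serious obstacle. The only subtle point worth stating clearly is the passage from the hypothesis ``unbounded height'' to the working hypothesis ``$H_L(y_i) \to \infty$,'' which is precisely the content of the remarks following the definition of $\alpha_x(L)$; everything else is manipulation of inequalities and the choice of $\delta'$ strictly between $2$ and $1/\gamma$.
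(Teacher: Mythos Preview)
Your argument is correct and is precisely the detailed unwinding of what the paper dismisses in one line as ``immediate considering the definition of $\alpha_x(L)$ in conjunction with the fact that the approximation constant is the reciprocal of the approximation exponent.'' The approach is the same; you have simply made explicit the contradiction with Theorem \ref{theorem9.2} that the paper leaves to the reader.
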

\begin{proof}
Immediate considering the definition of $\alpha_x(L)$ in conjunction with the fact that the approximation constant is the reciprocal of the approximation exponent.
\end{proof}
  
\np{}\label{9.3}  Our approximation theorem for rational points of an abelian variety $A$ over $\KK$ is proved in a manner similar to what is done in the number field case, see for example \cite[\S 7.3]{Serre:Mordell-Weil-Lectures}, and relies on the weak Mordell-Weil theorem.  Specifically, we use Theorem \ref{theorem9.2}, combined with the properties of the distance functions that we stated in \S \ref{4}, to prove the following result.

\begin{theorem}[Compare with {\cite[Second theorem on p.~98]{Serre:Mordell-Weil-Lectures}}]\label{theorem9.4}   Let $\KK$ be the function field of a smooth projective curve $C$ over an algebraically closed field $\overline{\kk}$ of characteristic zero and fix $p \in C(\overline{\kk})$.
If $L$ is a very ample line bundle on an abelian variety $A$  over $\KK$, $x \in A(\overline{\KK})$ and $\delta > 0$, then there is no infinite sequence of distinct points $\{x_i \} \subseteq A(\KK)$ with unbounded height and having the property that
$$d_p(x,x_i) \to 0 \text{ and }d_p(x,x_i)H_L(x_i)^\delta \leq 1, $$
for all $i \gg 0$.
\end{theorem}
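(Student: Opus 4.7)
The plan is to proceed by contradiction via an infinite descent using the ``multiplication by $n$'' morphism $[n]: A \to A$, closely paralleling the classical number field argument in \cite[pp.~98--99]{Serre:Mordell-Weil-Lectures}. The key observation is that, under such a descent, the approximation exponent gets multiplied by $n^2$; taking $n$ with $n^2 \delta > 2$ will therefore produce a sequence contradicting the Roth-type bound of Theorem \ref{theorem9.2} applied to $A$ (embedded in projective space via $L$).

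Supposing for contradiction that such a sequence $\{x_i\}$ exists, I would first fix $n \geq 2$ with $n^2 \delta > 2$ and invoke the weak Mordell-Weil theorem in our setting: this is a consequence of the Lang-N\'eron theorem together with the $n$-divisibility of the $\overline{\kk}$-points of the $\KK/\overline{\kk}$-trace of $A$, which forces $A(\KK)/nA(\KK)$ to be finite. Passing to a subsequence, one writes $x_i = n z_i + a$ with $a, z_i \in A(\KK)$, and distinctness of the $x_i$ forces distinctness of the $z_i$. Choosing $y \in A(\overline{\KK})$ with $ny = x - a$, the difference $n(z_i - y) = x_i - x$ tends to $0$ in the $v$-adic topology; since $[n]$ is \'etale in characteristic zero and $A[n](\overline{\KK})$ is finite, a pigeonhole argument, followed by replacing $y$ by a translate by a suitable $n$-torsion point, allows one to assume $z_i \to y$ in the $v$-adic topology.

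The technical heart of the argument is the comparison between the approximation of $x$ by $\{x_i\}$ and the approximation of $y$ by $\{z_i\}$. The map $\phi: z \mapsto nz + a$ is \'etale at $y$ with $\phi(y) = x$, and combining Lemma \ref{lemma4.4} with Proposition \ref{proposition4.5} applied to the convergent sequence $\{z_i\}$ yields bi-Lipschitz estimates $d_p(x, x_i) \asymp d_p(y, z_i)$ for $i$ large. On the height side, the standard quasi-quadratic behaviour of $h_L$ on an abelian variety---namely $h_L([n]w) = n^2 h_L(w) + O(\sqrt{h_L(w)})$ via the theorem of the cube, and $h_L(w+a) = h_L(w) + O(\sqrt{h_L(w)})$ for translations---implies $h_L(x_i)/h_L(z_i) \to n^2$. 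Substituting into the hypothesis $d_p(x,x_i) H_L(x_i)^\delta \leq 1$, one obtains, for any small $\eta > 0$ and all $i$ sufficiently large,
\[
d_p(y, z_i)\, H_L(z_i)^{(n^2 - \eta)\delta - \eta} \leq 1.
\]
Choosing $\eta$ small enough that $(n^2 - \eta)\delta - \eta > 2$ and applying Theorem \ref{theorem9.2} to the data $(A, y, \{z_i\})$ gives the required contradiction.

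The main obstacle is packaging the three descent ingredients---weak Mordell-Weil for function fields over $\overline{\kk}$, the \'etale local structure of $[n]$ at the $v$-adic level (yielding both the bi-Lipschitz distance estimate and the assertion $z_i \to y$), and the quasi-quadratic behaviour of $h_L$ under $[n]$ and under translation---in a form that applies cleanly when $y \in A(\overline{\KK}) \setminus A(\KK)$. Once these are in place, the gain of a factor of $n^2$ in the exponent is automatic, and Theorem \ref{theorem9.2} closes the argument.
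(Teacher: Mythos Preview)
Your proposal is correct and follows essentially the same route as the paper's proof: descent via $[n]$ using weak Mordell--Weil, the \'etale nature of $[n]$ combined with Lemma \ref{lemma4.4} and Proposition \ref{proposition4.5} to compare $d_p(x,x_i)$ with $d_p(y,z_i)$, quadratic growth of heights under $[n]$, and a final contradiction with Theorem \ref{theorem9.2}. Your treatment of the step $z_i \to y$ (pigeonhole on $A[n]$ followed by replacing $y$ by a torsion translate) is in fact slightly more explicit than the paper's, which passes directly from $mx_i' \to mb$ to $x_i' \to b$ via local generators without isolating the subsequence argument; otherwise the two arguments are the same.
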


\begin{proof}
Choose an embedding $A \hookrightarrow \PP^n$ afforded by $L$, let $\delta > 0$, choose an integer $m \geq 1$ with $(m^2-1)\delta > 3$, and let $\{x_i \} \subseteq A(\KK)$ be a sequence of distinct points with unbounded height with the property that
\begin{equation}\label{eqn9.2} d_p(x,x_i) \to 0 \text{ and }d_p(x,x_i)H_L(x_i)^\delta \leq 1, 
\end{equation}
as $i \to \infty$.

The weak Mordell-Weil theorem, \cite[Theorem 10.5.14]{Bombieri:Gubler},  implies that the group $A(\KK)/mA(\KK)$ is finite.  By passing to a subsequence with unbounded height if necessary, it follows that there exists $a \in A(\KK)$ and $x_i' \in A(\KK)$ so that 
\begin{equation}\label{eqn9.3}
x_i = m x_i' + a,
\end{equation}
for all $i$.

Let $d_p'(\cdot,\cdot)$ denote the distance function obtained by using the embedding
$A \xrightarrow{\tau_a} A \hookrightarrow \PP^n; $
here $\tau_a : A \rightarrow A$ denotes translation by $a \in A(\KK)$ in the group law.  The distance functions $d'_p(\cdot,\cdot)$ and $d_p(\cdot,\cdot)$ are equivalent by Proposition \ref{proposition4.3}.  Thus \eqref{eqn9.3} together with the fact that $d_p(x,x_i) \to 0$, as $i \to \infty$, implies that
$ d_p(x-a,mx_i') \to 0,$
as $i \to \infty$.
In particular 
$ \{mx_i'\} \to x - a,$
as $i \to \infty$.

Now note that since $A(\overline{\KK})$ is a divisible group, there exists $b \in A(\overline{\KK})$ so that 
$mb = x-a.$ 
As a consequence, since $ \{mx_i'\} \to x - a$
as $i \to \infty$,
we have that
$ d_p(mb,mx_i') \to 0,$
as $i \to \infty$.

Next consider the morphism $[m]_A : A \rightarrow A$,
defined by multiplication by $m$ in the group law,
near $b$.   In particular using the fact that $[m]_A$ is \'{e}tale in conjunction with Lemma \ref{lemma4.4} and Proposition \ref{proposition4.5},
we deduce that 
\begin{equation}\label{eqn9.3'}
d_{p}(b,x_i') \to 0, 
\end{equation}
as $i \to \infty$ too.

In more detail, let $\FF$ be the field of definition of $b$.  Lemma \ref{lemma4.4} implies that there exists an affine open subset $U$ of $A_\FF = A \times_{\spec \KK} \spec \FF$ and elements $u_1,\dots,u_r$ of $\Gamma(U,\Osh_{X_\FF})$ which generate the maximal ideal of $b$ and positive constants $c\leq C$ so that the inequality
\begin{equation}\label{eqn9.4}
c d_p(b,x_i') \leq \min(1,\max(|u_1(x_i')|_p,\dots,|u_r(x_i')|_p)) \leq C d_p(b,x_i')
\end{equation}
holds true for all $x_i ' \in U(\KK)$.  Now let $\mathfrak{m}_{mb}$ denote the maximal ideal of $mb$ and $\mathfrak{m}_b$ the maximal ideal of $b$.  Then, since the morphism $[m]_A$ is \'{e}tale, we have $\mathfrak{m}_{mb}\cdot\Osh_b = \mathfrak{m}_b$, see for example \cite[Ex. III 10.3]{Hart}.  Thus, if $u_1',\dots,u_r'$ are generators for $\mathfrak{m}_{mb}$ the maximal ideal of $mb$, then their pullbacks $[m]_A^* u_1',\dots, [m]_A^* u_r'$ generate $\mathfrak{m}_b$ too.

We now consider implications of Lemma \ref{lemma4.1}.  Specifically, Lemma \ref{lemma4.1} implies that the functions
\begin{equation}\label{eqn9.4'}\max(|u_1(\cdot)|_p,\dots,|u_r(\cdot)|_p) 
\end{equation}
and
\begin{equation}\label{eqn9.4''}\max(|[m]_A^* u_1'(\cdot)|_p,\dots, |[m]_A^* u_r'(\cdot)|_p) 
\end{equation}
are equivalent on every subset $E \subseteq U(\overline{\KK})$ which is bounded in $U$.  On the other hand, since
$$\max(|[m]_A^* u_1'(x_i')|_p,\dots, |[m]_A^*u_r'(x_i')|_p) = \max(|u_1'(mx_i')|_p,\dots,|u_r'(mx_i')|_p ) \to 0, $$
 as $i \to \infty$, it follows, by the equivalence of the functions \eqref{eqn9.4'} and \eqref{eqn9.4''}, that
 \begin{equation}\label{eqn9.4'''}
 \max(|u_1(x_i')|_p,\dots,|u_r(x_i')|_p) \to \infty, 
 \end{equation}
 as $i \to \infty$ too. Combining \eqref{eqn9.4} and \eqref{eqn9.4'''} we have that $d_p(b,x_i') \to 0$ as $i \to \infty$ too.

Now, by the theory of Neron-Tate heights, see for instance \cite[\S 9.2 and \S 9.3]{Bombieri:Gubler}, the function $\log H_L$ is quadratic up to a bounded function.  In particular, since $x_i = mx_i' + a$, it follows that $\{x_i'\}$ has unbounded height and also that
\begin{equation}\label{eqn9.5}
\frac{\log H_L(x_i)}{\log H_L(x_i')} \to m^2,
\end{equation}
and thus
\begin{equation}\label{eqn9.6}
H_L(x_i')^{m^2-1} \leq H_L(x_i),
\end{equation}
for all $i \gg 0$.

Consider now the isogeny 
$\tau_a \circ [m]_A : A \rightarrow A $
and let $u_1,\dots,u_r$ be regular functions which generate the maximal ideal of $b$.  By Proposition \ref{proposition4.5}, there exists a positive constant $c$ so that
\begin{equation}\label{eqn9.7}
c d_p(b,x_i') \leq \max(|u_1(x_i')|_p,\dots,|u_r(x_i')|_p).
\end{equation}
On the other hand, since $\tau_a \circ [m]_A$ is an \'{e}tale morphism we can choose $u_1,\dots, u_r$ so that there exists regular functions $u_1',\dots,u_r'$ which generate the maximal ideal of $x$ and which have the property that $$u_j'(mx_i'+a)=(\tau_a \circ [m]_A)^*u'_j(x_i') = u_j(x_i').$$  Thus, by Proposition \ref{proposition4.5}, there exists a positive constant $C$ so that
\begin{equation}\label{eqn9.8}
\max(|u_1(x_i')|_p,\dots,|u_r(x_i')|_p ) = \max(|u_1'(mx_i'+a)|_p,\dots,|u_r'(mx_i'+a)|_p) \leq  C d_p(x,x_i).
\end{equation}
Combining \eqref{eqn9.7} and \eqref{eqn9.8} we obtain
\begin{equation}\label{eqn9.9}
c d_p(b,x_i') \leq C d_p(x,x_i)
\end{equation}
and, using \eqref{eqn9.3'}, \eqref{eqn9.9} and \eqref{eqn9.6}, it follows from \eqref{eqn9.2} that, by passing to a subsequence if necessary so that $H_L(x_i') \to \infty$,
$$d_p(b,x_i') \to 0 \text{ and } d_p(b,x_i') < H_L(x_i')^{-k} ,$$
for some $k > 2$ and all $i \gg 0$ which contradicts Theorem \ref{theorem9.2}.
\end{proof}

Theorem \ref{theorem9.4} has the following consequence which we will use in \S \ref{9.5} and \S \ref{9.6}.

\begin{corollary}\label{corollary9.4'}
In the setting of Theorem \ref{theorem9.4}, we have $\alpha_x(L) = \infty$.
\end{corollary}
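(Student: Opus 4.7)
The plan is to derive the corollary as a direct consequence of Theorem \ref{theorem9.4} by unwinding the definition of $\alpha_x(L)$. Recall that $\alpha_x(L) = \infty$ by convention if there is no infinite sequence of distinct $\KK$-rational points of $A$ with unbounded height converging to $x$, so the only case to rule out is when such sequences exist. For each such sequence $\{y_i\} \subseteq A(\KK)$ with $d_p(x,y_i) \to 0$ and $H_L(y_i)$ unbounded, it suffices to show $\alpha_x(\{y_i\},L) = \infty$, since taking the infimum over all such sequences then gives $\alpha_x(L) = \infty$.

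I would argue by contradiction: assume $\alpha_x(\{y_i\},L) = \alpha_0 < \infty$ for some admissible sequence $\{y_i\}$. By definition of the infimum in \eqref{alpha:x:seq}, for every $\gamma > \alpha_0$ there is a positive constant $M$ with
$$d_p(x,y_i)^{\gamma} H_L(y_i) \leq M$$
for all $i$. Since $d_p(x,y_i) \leq 1$ and $H_L(y_i)$ is unbounded, we must have $\gamma > 0$. After passing to a subsequence we may assume $H_L(y_i) \to \infty$. Fix any $\delta$ with $0 < \delta < 1/\gamma$. Then
$$d_p(x,y_i)\, H_L(y_i)^{\delta} \;\leq\; M^{1/\gamma}\, H_L(y_i)^{\delta - 1/\gamma} \;\longrightarrow\; 0$$
as $i \to \infty$, so in particular $d_p(x,y_i) H_L(y_i)^{\delta} \leq 1$ for all $i \gg 0$. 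Combined with $d_p(x,y_i) \to 0$, this provides an infinite sequence of distinct points in $A(\KK)$ with unbounded height violating the conclusion of Theorem \ref{theorem9.4} for this $\delta > 0$, a contradiction.

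There is essentially no obstacle here; the work has all been done in Theorem \ref{theorem9.4}, and the corollary amounts to translating between the exponent formulation used in the theorem and the reciprocal exponent that defines the approximation constant. The only small care needed is to pass to a subsequence with $H_L(y_i) \to \infty$ (justified by remark (b) after the definition of $\alpha_x(L)$) so that the factor $H_L(y_i)^{\delta - 1/\gamma}$ genuinely drives the product below $1$.
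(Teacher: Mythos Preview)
Your proof is correct and follows the same approach as the paper, which simply states that the corollary follows immediately from Theorem~\ref{theorem9.4} and the definition of $\alpha_x(L)$; you have merely unpacked that implication in detail. The only minor stylistic point is that the observation ``$\gamma>0$'' is more cleanly phrased as ``$\alpha_0\geq 0$, hence any $\gamma>\alpha_0$ is positive,'' but the argument is sound as written.
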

\begin{proof}
Considering the definition of $\alpha_x(L)$ given in \S \ref{3.8'}, the conclusion of Corollary \ref{corollary9.4'} follows immediately from Theorem \ref{theorem9.4}.
\end{proof}

\np{}\label{9.5}  We now explain how Theorem \ref{theorem9.4} and Corollary \ref{corollary9.4'} allow for the calculation of the approximation constant $\alpha_x(L)=\alpha_x(L;p)$ for $L$ a very ample line bundle on $B$ an irreducible curve over $\KK$ and $x \in B(\overline{\KK})$.  This is the content of Theorem \ref{theorem9.1'} which is proved in a manner similar to \cite[Theorem 2.16]{McKinnon-Roth}.  

\begin{theorem}\label{theorem9.1'}  Let $\KK$ be the function field of a smooth projective curve  over an algebraically closed field of characteristic zero. If $\alpha_x(L) < \infty$ for $L$ a very ample line bundle on $B$ an irreducible curve over $\KK$ and $x \in B(\overline{\KK})$, then $B$ has geometric genus equal to zero.
\end{theorem}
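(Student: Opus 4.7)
The plan is to prove the contrapositive: assume the geometric genus of $B$ is at least one, and show that $\alpha_x(L) = \infty$ for every very ample line bundle $L$ on $B$ and every $x \in B(\overline{\KK})$.

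First I would treat the case that $B$ is smooth of genus $g \geq 1$, following the sketch of Example \ref{curve:function:field:setting}. If $g = 1$, then after fixing a base point $B$ becomes an abelian variety, and Corollary \ref{corollary9.4'} directly gives $\alpha_x(L) = \infty$. If $g \geq 2$, fix a base point, let $\iota \colon B \hookrightarrow A = \operatorname{Jac}(B)$ be the Abel--Jacobi embedding, and let $\Theta \subseteq A$ be the theta divisor, so that $\Theta^{\otimes 3}$ is very ample. Corollary \ref{corollary9.4'} gives $\alpha_{x,A}(\Theta^{\otimes 3}) = \infty$ for every $x \in A(\overline{\KK})$, and the monotonicity inequality $\alpha_{x,B}(\Theta^{\otimes 3}|_B) \geq \alpha_{x,A}(\Theta^{\otimes 3})$ (the function-field analogue of \cite[Proposition 2.14 (c)]{McKinnon-Roth}, immediate from the definitions since every approximating sequence in $B(\KK)$ also sits in $A(\KK)$ with the same heights and distances) yields $\alpha_{x,B}(\Theta^{\otimes 3}|_B) = \infty$. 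For an arbitrary very ample $L$ on $B$, the scaling relation $\alpha_x(L^{\otimes n}) = n\alpha_x(L)$ (used already in the proof of Proposition \ref{proposition6.2}) combined with the comparability of heights attached to any two ample line bundles on $B$ (via effective differences of sufficiently large tensor powers) transfers the infinity of $\alpha$ from $\Theta^{\otimes 3}|_B$ to $L$.

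Second, I would reduce the general, possibly singular, case to the smooth case via the normalization $\nu \colon \widetilde{B} \to B$, which preserves the geometric genus. Given an infinite sequence $\{y_i\} \subseteq B(\KK)$ of distinct points with unbounded height converging to $x$, after discarding finitely many terms we may assume all $y_i$ lie in the smooth locus of $B$, where $\nu$ is an isomorphism; each $y_i$ lifts uniquely to $\widetilde{y}_i \in \widetilde{B}(\KK)$, and, after passing to a further subsequence, the $\widetilde{y}_i$ converge to a fixed $\widetilde{x} \in \nu^{-1}(x)$. Since $\nu$ is finite, $H_{\nu^* L}(\widetilde{y}_i) = H_L(y_i)$, so the $\widetilde{y}_i$ have unbounded height. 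Applying Lemma \ref{lemma4.4} and Proposition \ref{proposition4.5}, if $u_1, \ldots, u_r$ generate $\mathfrak{m}_x$ in $\Osh_{B,x}$, then $\nu^* u_1, \ldots, \nu^* u_r$ generate an $\mathfrak{m}_{\widetilde{x}}$-primary ideal of $\Osh_{\widetilde{B},\widetilde{x}}$, hence contain some power $\mathfrak{m}_{\widetilde{x}}^N$. This yields a positive constant $C$ with $d_p(\widetilde{x},\widetilde{y}_i) \leq C\, d_p(x,y_i)^{1/N}$, which in turn gives $\alpha_{\widetilde{x},\widetilde{B}}(\{\widetilde{y}_i\}, \nu^* L) \leq N \alpha_{x,B}(\{y_i\}, L)$. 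As $\nu^* L$ is ample on $\widetilde{B}$ and becomes very ample after a suitable tensor power (irrelevant for whether $\alpha$ is infinite), the smooth case forces $\alpha_{\widetilde{x},\widetilde{B}}(\nu^* L) = \infty$, and hence $\alpha_{x,B}(L) = \infty$.

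The main obstacle is the distance comparison at a singular point of $B$, which rests on the commutative algebra of the finite birational morphism $\nu$: one must verify that $\Osh_{\widetilde{B},\widetilde{x}}$ is a finite $\Osh_{B,x}$-module and that the pullback of $\mathfrak{m}_x$ generates an $\mathfrak{m}_{\widetilde{x}}$-primary ideal. Only the finiteness of $N$ matters here, so no quantitative control of the singularity type is needed. A secondary routine point is the subvariety monotonicity $\alpha_{x,B}(L|_B) \geq \alpha_{x,A}(L)$ used in the smooth case, which follows immediately because the distance and height functions on $B$ from the embedding determined by $L|_B$ are restrictions of the corresponding functions on $A$.
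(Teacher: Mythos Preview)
Your proof is correct and follows essentially the same route as the paper: reduce to the normalization $\widetilde{B}$, compare distances and heights across the normalization map using the local description of $d_p$ from Lemma~\ref{lemma4.4}/Proposition~\ref{proposition4.5}, and then invoke the Abel--Jacobi embedding together with Corollary~\ref{corollary9.4'} to conclude that $\alpha = \infty$ on a smooth curve of positive genus. The only differences are organizational---the paper replaces $L$ by a power at the outset so that $\phi^*L$ is very ample, and it identifies the precise exponent in the distance comparison as the branch multiplicity $m_q$ rather than an unspecified $N$---and neither affects the logic, since only finiteness of the exponent is needed.
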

\begin{proof}
Let $\phi : \widetilde{B} \rightarrow B$ be the normalization morphism.  Since the pullback of $L$ to $\widetilde{B}$ via $\phi$ is ample, \cite[Ex. III 5.7 (d)]{Hart}, and since $\alpha_x(L) = N \alpha_x(L^{\otimes N})$, for $N > 0$, without loss of generality we may assume that the pullback of $L$ to $\widetilde{B}$ is very ample.

We next note that given an infinite sequence $\{x_i \} \subseteq B(\KK)$ of distinct points with unbounded height and $\{x_i\} \to x$, we may, by passing to a subsequence with unbounded height, assume that none of the $x_i$ are the finitely many points where $\phi$ is not an isomorphism.  We may also assume that the sequence $\{\phi^{-1}(x_i)\}$ converges with respect to $d_p(\cdot,\cdot)$, the distance function on $\widetilde{B}$ determined by $\phi^* L$, to one of the points $q \in \phi^{-1}(x)$.  Indeed, if the branch corresponding to $q$ has multiplicity $m_q$, then locally $\phi$ is described by functions in the $m_q$th power of the maximal ideal of $q$.  Consequently, as can be deduced from Lemma \ref{lemma4.4}, $d_p(x,\phi(q_i))$ is equivalent to $d_p(q,q_i)^{m_q}$ as $i \to \infty$.
Conversely, it is clear that given an infinite sequence $\{q_i\} \subseteq \widetilde{B}$ of distinct points with unbounded height and $\{q_i\} \to q$ for some $q \in \phi^{-1}(x)$, we then have that $\{\phi(q_i)\} \to x$.
 
Thus, to compute $\alpha_x(L)$, it suffices to consider $\alpha_x(\{x_i\},L)$ for those infinite sequences of distinct points with  unbounded height  $\{x_i\} \subseteq B(\overline{\KK})$ which arise from infinite sequences $\{q_i \} \subseteq \widetilde{B}(\KK)$ with unbounded height and converging to some $q \in \widetilde{B}(\overline{\KK})$ with $q \in \phi^{-1}(x)$.

Now given such a sequence $\{q_i \} \to q$, we have
\begin{equation}\label{eqn5.1} H_{\phi^*L}(q_i) \sim H_L(\phi(q_i)),
\end{equation}
for all $i$. 

Since $d_p(x,\phi(q_i))$ is equivalent to $d_p(q,q_i)^{m_q}$ as $i \to \infty$, this fact combined with \eqref{eqn5.1} implies that
\begin{equation}\label{eqn9.2'}
\alpha_x(\{\phi(q_i)\},L) \sim \frac{1}{m_q} \alpha_q(\{q_i\}, \phi^*L)
\end{equation}
and so, by considering the Abel-Jacobi map of $\widetilde{B}$, compare with the discussion given in \S  \ref{3.10}, Corollary \ref{corollary9.4'} implies that the righthand side of \eqref{eqn9.2'} is infinite if $B$ has geometric genus at least $1$.
\end{proof}

\np{}\label{9.6}  Having established Theorem \ref{theorem9.1'}, we are now able to refine Corollary \ref{corollary1.2} and, in particular, establish Corollary \ref{corollary1.3}.  

\begin{theorem}\label{theorem9.1''} Assume that $\KK$ is the function field of a smooth projective curve over an algebraically closed field of characteristic zero.  Let $X$ be a geometrically irreducible projective variety defined over $\KK$ and $L$ a very ample line bundle on $X$ defined over $\KK$.
If $x$ is a $\overline{\KK}$-rational point of $X$, then the inequality $\alpha_x(L) \geq \frac{1}{2}\epsilon_x(L)$ holds true.  If equality holds, then $\alpha_{x,X}(L) = \alpha_{x,B}(L|_B)$ for some rational curve $B\subseteq X$ defined over $\KK$.   
\end{theorem}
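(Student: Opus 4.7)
The plan is to deduce this theorem from Corollary \ref{corollary8.1} using Theorem \ref{theorem9.1'} as the additional ingredient that upgrades the curve appearing there to a rational one.

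First I would fix, via the very ample line bundle $L$, an embedding $X \hookrightarrow \PP^n_{\KK}$ so that $L = \Osh_{\PP^n_\KK}(1)|_X$, and fix the prime divisor $\mathfrak{p}$ corresponding to the point of the underlying smooth projective curve used to define $\alpha_x(L) = \alpha_x(L;\mathfrak{p})$. A direct application of Corollary \ref{corollary8.1} then gives the inequality $\alpha_x(L) \geq \frac{1}{2}\epsilon_x(L)$ at once, and in the equality case produces a curve $B \subseteq X$ defined over $\KK$ with $\alpha_{x,X}(L) = \alpha_{x,B}(L|_B)$. Since the proof of Corollary \ref{corollary8.1} invokes Lemma 2.17 of \cite{McKinnon-Roth} to render the intermediate subvarieties geometrically irreducible, I may further assume that $B$ is geometrically irreducible.

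Next I would argue that $B$ is rational. Since $L$ is very ample on $X$, the composition $B \hookrightarrow X \hookrightarrow \PP^n_\KK$ shows that $L|_B$ is very ample on $B$. Moreover $\epsilon_x(L) > 0$ because $L$ is very ample, so in the equality case $\alpha_{x,B}(L|_B) = \frac{1}{2}\epsilon_x(L) < \infty$. Theorem \ref{theorem9.1'} then applies to the pair $(B, L|_B)$ and forces the geometric genus of $B$ to be zero, i.e.\ $B$ is a rational curve defined over $\KK$.

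The heavy lifting has already been carried out in Corollary \ref{corollary8.1} (which draws on Wang's effective subspace theorem and the vanishing-sequence / Diophantine-constraint formalism developed earlier) and in Theorem \ref{theorem9.1'} (which reduces via the normalization and the Abel--Jacobi map to Theorem \ref{theorem9.4}, an approximation theorem for abelian varieties over function fields). The present deduction is therefore essentially formal, and I expect no substantive obstacle in executing this final step; the only point requiring verification is the very-ampleness of $L|_B$, which was noted above.
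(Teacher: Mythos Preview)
Your proposal is correct and follows essentially the same route as the paper: invoke Corollary \ref{corollary8.1} for the inequality and the curve $B$ in the equality case, then apply Theorem \ref{theorem9.1'} to force $B$ to be rational. One small slip: the reason $\alpha_{x,B}(L|_B)=\tfrac{1}{2}\epsilon_x(L)$ is finite is that $\epsilon_x(L)<\infty$ (as the paper notes), not that $\epsilon_x(L)>0$.
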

\begin{proof}[Proof of Theorem \ref{theorem9.1''} and Corollary \ref{corollary1.3}]
By Corollary \ref{corollary8.1}, the given inequalities hold true and if equality holds true then the approximation constant is computed on an irreducible curve $B$ defined over $\KK$ and passing through $x$.   On the other hand, since $\epsilon_x(L) < \infty$ it follows that $\alpha_x(L)$ must be finite too and so $B$ must be rational by Theorem \ref{theorem9.1'}.
\end{proof}

\providecommand{\bysame}{\leavevmode\hbox to3em{\hrulefill}\thinspace}
\providecommand{\MR}{\relax\ifhmode\unskip\space\fi MR }
\providecommand{\MRhref}[2]{%
  \href{http://www.ams.org/mathscinet-getitem?mr=#1}{#2}
}
\providecommand{\href}[2]{#2}

\end{document}